\newtheorem{theorem}{Theorem}[section]
\newtheorem{lemma}[theorem]{Lemma}
\newtheorem{proposition}[theorem]{Proposition}
\newtheorem{remark}{Remark}[section]
\newcommand{\be}{\begin{equation}}
\newcommand{\ee}{\end{equation}}
\numberwithin{equation}{section}
\begin{document}

\title[Mixed local-nonlocal quasilinear problems with critical nonlinearities]
{Mixed local-nonlocal quasilinear problems with critical nonlinearities}

\author[J.V. da Silva]{Jo\~ao Vitor da Silva}
\address[J.V. da Silva]{Departamento de Matem\'atica, Universidade Estadual de Campinas, IMECC,
Rua S\'ergio Buarque de Holanda 651, Campinas, CEP 13083-859 Brazil}
\email{jdasilva@unicamp.br}

\author[A. Fiscella]{Alessio Fiscella}
\address[A. Fiscella]{Dipartimento di Matematica e Applicazioni, Universit\`a degli Studi di Milano-Bicocca, Via Cozzi 55, Milano, CAP 20125, Italy}
\email{alessio.fiscella@unimib.it}

\author[V.A.B. Viloria]{Victor A. Blanco Viloria}
\address[V.A.B. Viloria]{Departamento de Matem\'atica, Universidade Estadual de Campinas, IMECC,
Rua S\'ergio Buarque de Holanda 651, Campinas, CEP 13083-859 Brazil}
\email{vblancocc@gmail.com}

\subjclass{35M12, 35J92, 35R11, 35B33, 35P30 35A15, 35A16}
\keywords{Operators of mixed order, $p$-Laplacian, critical exponents, variational methods, Krasnoselskii's genus, category theory}

\begin{abstract}

We study existence and multiplicity of nontrivial solutions of the following problem
$$
\left\{
\begin{array}{rcll}
-\Delta_p u+(-\Delta_p)^{s} u & = & \lambda|u|^{q-2}u+|u|^{p^{\ast}-2}u & \mbox{ in }\Omega,\\
u & = & 0 & \mbox{ on }  \mathbb{R}^{N} \setminus \Omega,
\end{array}
\right.
$$
where $\Omega\subset \mathbb{R}^N$ is a bounded open set with smooth boundary, dimension $N\geq 2$, parameter $\lambda>0$, exponents $0<s<1<p<N$, while $q\in(1,p^{\ast})$ with $p^{\ast}=\frac{Np}{N-p}$. The problem is driven by an operator of mixed order obtained by the sum of the classical $p$-Laplacian and of the fractional $p$-Laplacian. We analyze three different scenarios depending on exponent $q$. For this, we combine variational methods with some topological techniques, such as the Krasnoselskii genus and the Lusternik-Schnirelman category theories.

\end{abstract}

\maketitle

\section{Introduction}\label{sec:introduction}

In the present paper we deal with the following problem
\begin{equation}\label{P}
\left\{
\begin{array}{rcll}
-\Delta_p u+(-\Delta_p)^{s} u & = & \lambda|u|^{q-2}u+|u|^{p^{\ast}-2}u & \mbox{ in }\Omega,\\
u & = & 0 & \mbox{ on }  \mathbb{R}^{N} \setminus \Omega,
\end{array}
\right.
\end{equation}
where $\Omega\subset\mathbb R^N$ is a bounded open set with smooth boundary, dimension $N\geq2$, parameter $\lambda>0$, exponents $0<s<1<p<N$, while $q\in(1,p^{\ast})$ with $p^{\ast}=\frac{Np}{N-p}$.
In the left hand side of \eqref{P} we have the classical $p$-Laplace operator $\Delta_p u=\text{div}(|\nabla u|^{p-2}\nabla u)$ plus its fractional counterpart which, up to normalization factor, can be set as
$$
(-\Delta_p)^su(x)= \lim_{\varepsilon\to0^+}\int_{\mathbb{R}^N\setminus B_\varepsilon(x)}\frac{|u(x)-u(y)|^{p-2}(u(x)-u(y))}{|x-y|^{N+sp}}dy,
$$
along any $u\in C_0^\infty(\Omega)$.

Recently, the interest on nonlinear problems driven by operators of mixed type has grown more and more, in connection with the study of optimal animal foraging strategies, see for example \cite{DV}. We should also highlight the relevant contributions in the following topics: Hong-Krahn-Szeg\"{o} inequality \cite{BDVV23}, Faber-Krahn inequality and applications \cite{BDVV23-1}, system of eigenvalue problems and their asymptotic limit as $p \to \infty$ \cite{BDD}, mixed models with concave-convex nonlinearities and their asymptotic limit as $p \to \infty$ \cite{DaSS20}, local/non-local equations of H\'enon type \cite{SalVec22}, regularity theory for mixed models \cite{DeFM22}, \cite{GK22} and \cite{SVWZ22}, and higher H\"{o}lder regularity for mixed local and nonlocal
elliptic models \cite{GL23}. Mathematically speaking, this operator offers quite relevant challenges caused by the combination of nonlocal difficulties with the lack of invariance under scaling.

In this context, \eqref{P} can be considered as a "Br\'ezis-Nirenberg" problem of mixed local-nonlocal type, being a generalization of the classical one studied in \cite{BN}. Actually, the problem in \cite{BN} was generalized in a quasilinear situation in \cite{GA2}  and in the fractional case in \cite{SV}. We also recall \cite{BDVV} where the authors studied \eqref{P} when $p=2$, proving the existence of a nontrivial solution of \eqref{P} when $q\in[p,p^{\ast})$. The main aim of the present paper is to generalize \cite{BDVV} with $p\not=2$ and $q\in(1,p^{\ast})$, providing existence and multiplicity results for \eqref{P} depending on the value of $q$.

We first study \eqref{P} in the sublinear case, namely with $q\in(1,p)$. By topological arguments based on the Krasnoselskii genus theory, we can state the following result.

\begin{theorem} \label{T3}
Let $0<s<1<p<N$ and let $q\in(1,p)$.

Then, there exists $\lambda_{\ast}>0$ such that for any $\lambda\in(0,\lambda_{\ast})$ problem \eqref{P} admits infinite many nontrivial solutions with negative energy.
\end{theorem}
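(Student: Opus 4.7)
The plan is to apply a Krasnoselskii genus critical point theorem to a suitable truncation of the energy functional associated with \eqref{P}. Working in the mixed-order Sobolev space $X$ with norm
\[\|u\|^p=\int_\Omega|\nabla u|^p\,dx+[u]_{s,p}^p\]
and energy
\[I_\lambda(u)=\frac{1}{p}\|u\|^p-\frac{\lambda}{q}\int_\Omega|u|^q\,dx-\frac{1}{p^\ast}\int_\Omega|u|^{p^\ast}\,dx,\]
the functional $I_\lambda$ is unbounded below and fails the Palais--Smale (PS) condition at arbitrary levels because of the critical term. Following the quasilinear Br\'ezis--Nirenberg strategy of \cite{GA2}, I would multiply the critical nonlinearity by a smooth cutoff $\varphi(\|u\|/R)$, with $\varphi\in C^\infty(\mathbb R_+,[0,1])$ equal to $1$ on $[0,1]$ and vanishing on $[2,\infty)$ and $R>0$ small, producing a truncated even functional $J_\lambda\in C^1(X)$ that agrees with $I_\lambda$ on $\{\|u\|\le R\}$. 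Since $q<p$, $J_\lambda$ is bounded below and coercive, and by choosing $R$ small and $\lambda_\ast>0$ accordingly (in terms of $R$ and the Sobolev constants), one guarantees that $J_\lambda(u)>0$ whenever $\|u\|\ge R$, so that every $u$ with $J_\lambda(u)<0$ lies in the untruncated region, hence solves \eqref{P} with $I_\lambda(u)<0$.

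Next I would verify the genus condition on the sublevel sets of $J_\lambda$. For each $k\in\mathbb N$, pick a $k$-dimensional subspace $V_k\subset C_c^\infty(\Omega)\subset X$; norm equivalence on $V_k$ yields a constant $c_k>0$ with $\|u\|_q^q\ge c_k\|u\|^q$ for every $u\in V_k$. Since $q<p<p^\ast$, on a sufficiently small sphere $S_{r_k}^{V_k}=\{u\in V_k:\|u\|=r_k\}$ with $r_k<R$ the negative term $-\frac{\lambda}{q}\|u\|_q^q$ dominates, giving $J_\lambda\le -\varepsilon_k<0$ on $S_{r_k}^{V_k}$. Being oddly homeomorphic to $S^{k-1}$, this sphere has Krasnoselskii genus equal to $k$, and the construction works for every $k$ under the same $\lambda_\ast$. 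A standard symmetric deformation/minimax argument then furnishes the values
\[c_k=\inf_{A\in\Sigma_k}\sup_{u\in A}J_\lambda(u),\qquad \Sigma_k=\{A\subset X\setminus\{0\}\;\text{closed, symmetric, }\gamma(A)\ge k\},\]
satisfying $-\infty<c_1\le c_2\le\cdots<0$ and $c_k\to 0^-$, each $c_k$ being a critical value of $J_\lambda$. Infinitely many distinct critical points are obtained either directly when the $c_k$'s are strictly increasing or, in case of ties, from the lower bound on the genus of the critical set at the common level.

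The hardest step will be verifying the (PS)$_c$ condition for $J_\lambda$ at every $c<0$. Boundedness of any (PS)$_c$ sequence follows from coercivity, and extraction of a weak limit $u\in X$ is routine; the delicate point is to rule out concentration in the critical term, which is done via the Br\'ezis--Lieb lemma together with the quantitative Sobolev threshold $\frac{1}{N}S^{N/p}$, where $S$ is the best constant in $W_0^{1,p}(\Omega)\hookrightarrow L^{p^\ast}(\Omega)$. Since our levels $c<0$ lie well below this positive threshold, no concentration can occur. In the mixed setting the local embedding alone drives the critical analysis, while the fractional seminorm $[u]_{s,p}^p$ only contributes positive, subcritical, compactly-embedded mass that is easily absorbed, so the compactness argument of \cite{GA2} carries over with only notational modifications dictated by the extra nonlocal term.
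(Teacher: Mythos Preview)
Your approach is essentially the paper's: truncate the critical term, verify coercivity and the genus condition via finite-dimensional subspaces, and run the standard Krasnoselskii minimax scheme on the sublevel sets. One point deserves care, however. For $q<p$ the Palais--Smale threshold is \emph{not} simply $\frac{1}{N}\mathcal{S}^{N/p}$: in the energy identity
\[
c=\Bigl(\frac{1}{p}-\frac{1}{p^\ast}\Bigr)\bigl(\ell^{p^\ast}+\|u\|_{p^\ast}^{p^\ast}\bigr)-\lambda\Bigl(\frac{1}{q}-\frac{1}{p}\Bigr)\|u\|_q^q,
\]
the last term is \emph{subtracted} (since $\tfrac{1}{q}-\tfrac{1}{p}>0$), so one cannot drop it. The paper absorbs it via Young's inequality, obtaining the $\lambda$-dependent threshold
\[
\frac{1}{N}\mathcal{S}^{N/p}-|\Omega|\Bigl(\tfrac{1}{p}-\tfrac{1}{p^\ast}\Bigr)^{-\frac{q}{p^\ast-q}}\Bigl[\lambda\bigl(\tfrac{1}{q}-\tfrac{1}{p}\bigr)\Bigr]^{\frac{p^\ast}{p^\ast-q}},
\]
and then requires $\lambda_\ast$ small enough to make this quantity positive (so that the negative levels $c_k$ lie below it). Thus $\lambda_\ast$ must be chosen not only for the truncation geometry but also for the compactness threshold; once you incorporate this extra smallness condition, your outline goes through exactly as in the paper.
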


Theorem \ref{T3} generalizes the multiplicity result proved in the classical case in \cite[Theorem 4.5]{GA2}.

In order to study \eqref{P} in the linear case, we apply the \textit{classical cohomological index theory} as in \cite{morse}. For this, we first need to introduce the eigenvalues of nonlinear operator $\mathcal{L}_{p, s}(\cdot):=-\Delta_{p}(\cdot)+(-\Delta_p)^s(\cdot)$. We recall here the eigenvalue problem
\begin{equation}\label{kir1}
\left\{
\begin{array}{rcll}
-\Delta_p u+(-\Delta_p)^{s} u & = & \lambda|u|^{p-2}u & \mbox{ in }\Omega,\\
u & = & 0 & \mbox{ on }  \mathbb{R}^{N} \setminus \Omega.
\end{array}
\right.
\end{equation}
Then, we set as $\{\lambda_k\}_k$ the sequence of eigenvalues nontrivially solving \eqref{kir1} with parameter $\lambda=\lambda_k$. We also recall the best constant of Sobolev embedding for $W_0^{1,p}(\Omega)$ as
\begin{equation}\label{sob1}
\mathcal{S}=\inf_{\substack{v\in W^{1,p}_0(\Omega)\\
v\not\equiv0}}\displaystyle\frac{\|\nabla v\|_p^p}{\|v\|_{p^{\ast}}^{p}},
\end{equation}
which is fundamental to state the range of $\lambda$ in the following bifurcation result.

\begin{theorem}\label{T2}
Let $0<s<1<p<N$ and let $q=p$. Let $\widetilde{\lambda}=\frac{\mathcal{S}}{|\Omega|^{p/N}}$, where $\mathcal{S}$ is set in \eqref{sob1}.

Then, we have:
\begin{itemize}
\item [$(i)$] if $\lambda_{k+1}-\widetilde{\lambda}<\lambda<\lambda_1$, problem \eqref{P} admits a pair of nontrivial solutions $\pm u_{\lambda}$ such that $u_{\lambda}\to 0$ when $\lambda\to \lambda_1$;
\item [$(ii)$] if $\lambda_k\leq \lambda<\lambda_{k+1}=\ldots=\lambda_{k+m}<\lambda_{k+m+1}$ for some  $k$, $m\in \mathbb{N}$ and $\lambda>\lambda_{k+1}-\widetilde{\lambda}$,  problem \eqref{P} admits $m$ distinct pairs of nontrivial solutions $\pm u_{j,\lambda}$, with $j=1,\ldots,m$, such that $u_{j,\lambda}\to 0$ when $\lambda\to \lambda_{k+1}$.
\end{itemize}
\end{theorem}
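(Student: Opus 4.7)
My plan is to deduce Theorem~\ref{T2} from the abstract $\mathbb{Z}_2$-cohomological index scheme of \cite{morse}, applied to the energy functional
\begin{equation*}
J_\lambda(u)=\frac{1}{p}\bigl(\|\nabla u\|_p^p+[u]_{s,p}^p\bigr)-\frac{\lambda}{p}\|u\|_p^p-\frac{1}{p^{\ast}}\|u\|_{p^{\ast}}^{p^{\ast}},
\end{equation*}
defined on the natural solution space $W^{1,p}_0(\Omega)$ of \eqref{P} with $q=p$. The first ingredient is a local Palais-Smale condition: $J_\lambda$ satisfies $(PS)_c$ for every $c<c^{\ast}:=\frac{1}{N}\mathcal{S}^{N/p}$. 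This follows by a Lions-type concentration-compactness argument, exploiting the fact that concentration of a $(PS)$ sequence can occur only at the local critical scale while the fractional seminorm is subcritically compact; I expect the relevant compactness lemmas to have been prepared in the earlier sections devoted to the superlinear case.

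Next I would encode the spectral theory of $\mathcal{L}_{p,s}$ through the cohomological index $i$. On the unit sphere $\mathbb{S}:=\{u\in W^{1,p}_0(\Omega)\,:\,\|u\|_p=1\}$ consider the mixed Dirichlet form $R(u):=\|\nabla u\|_p^p+[u]_{s,p}^p$. From the variational characterization of the eigenvalues $\lambda_k$ solving \eqref{kir1} one obtains
\begin{equation*}
i(\{u\in\mathbb{S}:R(u)\leq\lambda_k\})\geq k\quad\text{and}\quad i(\{u\in\mathbb{S}:R(u)\geq\lambda_{k+1}\})\leq k.
\end{equation*}
Combined with standard deformation arguments confined to the region $\{J_\lambda<c^{\ast}\}$, these algebraic facts supply the required linking structure, producing one pair $\pm u_\lambda$ in case $(i)$ via the mountain-pass theorem (since $\lambda<\lambda_1$ makes $J_\lambda$ strictly positive on small spheres and negative along large rays), and $m$ distinct pairs $\pm u_{j,\lambda}$ in case $(ii)$ via the pseudo-index theorem, thanks to the index jump of size $m$ across the degenerate eigenvalue $\lambda_{k+1}$.

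The heart of the argument, and the main obstacle, is the upper minimax estimate: the relevant minimax levels must lie strictly below $c^{\ast}$ whenever $\lambda>\lambda_{k+1}-\widetilde{\lambda}$. I would test with truncated Talenti instantons $U_{\varepsilon}$ concentrated at an interior point of $\Omega$, coupled with eigenfunction layers spanning the first $k+m$ eigenspaces of \eqref{kir1}. The local critical term supplies the leading contribution $\frac{1}{N}\mathcal{S}^{N/p}$, while the fractional seminorm $[U_{\varepsilon}]_{s,p}^p$ is of strictly lower order in $\varepsilon$, thus preserving the Sobolev constant $\mathcal{S}$ at leading order. The threshold $\widetilde{\lambda}=\mathcal{S}/|\Omega|^{p/N}$ emerges as a lower bound for $\lambda_1$, obtained by combining $\mathcal{S}\|u\|_{p^{\ast}}^p\leq\|\nabla u\|_p^p$ with $\|u\|_p^p\leq|\Omega|^{p/N}\|u\|_{p^{\ast}}^p$, and it measures precisely how much gain the subtracted $L^p$ term provides in the fibering maximum of $J_\lambda$. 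Once the minimax value sits below $c^{\ast}$, the $(PS)$ condition delivers the critical points, and the asymptotics $u_{j,\lambda}\to 0$ as $\lambda\to\lambda_{k+1}$ follows from the fact that the minimax level itself collapses to zero in this limit, forcing any weak $(PS)$ limit to vanish. The most delicate aspect will be controlling the interaction between the Talenti bubble, the subleading fractional seminorm, and the eigenfunction layers, generalizing the $p=2$ calculations of \cite{BDVV} to the full quasilinear mixed regime.
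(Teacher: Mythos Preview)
Your overall framework is correct and matches the paper: the $\mathbb{Z}_2$-cohomological index machinery of \cite{morse}, the $(PS)_c$ compactness below $\frac{1}{N}\mathcal{S}^{N/p}$ (this is exactly Lemma~\ref{lemmaPS}$(i)$), the eigenvalue sequence defined via the index, and the pseudo-index theorem (Proposition~\ref{pro1.0.23}) to produce the $m$ pairs. The asymptotic $u_{j,\lambda}\to 0$ also follows as you say, from the collapse of the minimax level.

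Where you diverge from the paper, and where there is a genuine gap, is the upper minimax estimate. You propose to test with truncated Talenti instantons coupled with ``eigenfunction layers spanning the first $k+m$ eigenspaces.'' This Cerami--Fortunato--Struwe style construction is natural for $p=2$, but for the quasilinear operator $\mathcal{L}_{p,s}$ with $p\neq 2$ there is no linear eigenspace structure to span, and the interaction between a Talenti bubble and an eigenfunction in a $p$-homogeneous energy is not computable by the usual expansion. The paper avoids this obstacle entirely: no Talenti functions enter the proof of Theorem~\ref{T2}. Instead, one takes $A_0$ to be a compact symmetric subset of the sublevel set $\widetilde{\Psi}^{\lambda_{k+m}}=\{u\in\mathcal{N}:\|u\|_{\mathrm{X}_0}^p\leq\lambda_{k+m}\|u\|_p^p\}$ with $i(A_0)=k+m$ (Lemma~\ref{lem2.2.9}), and $B_0=\widetilde{\Psi}_{\lambda_{k+1}}$. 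On the cone $X$ over $A_0$ one has $\|u\|_{\mathrm{X}_0}^p\leq\lambda_{k+m}\|u\|_p^p$, so by H\"older's inequality $\|u\|_p^p\leq|\Omega|^{p/N}\|u\|_{p^{\ast}}^p$,
\[
J_\lambda(u)\leq\frac{\lambda_{k+m}-\lambda}{p}\|u\|_p^p-\frac{1}{p^{\ast}|\Omega|^{p^{\ast}/N}}\|u\|_p^{p^{\ast}},
\]
and maximizing the right-hand side over $\|u\|_p$ gives $\sup_X J_\lambda\leq\frac{1}{N}(\lambda_{k+m}-\lambda)^{N/p}|\Omega|<\frac{1}{N}\mathcal{S}^{N/p}$ precisely when $\lambda>\lambda_{k+m}-\widetilde{\lambda}$. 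So the threshold $\widetilde{\lambda}$ arises from a one-line H\"older computation, not from any bubble asymptotics; the delicate interaction analysis you flag as the main obstacle is simply not needed here.
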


Theorem \ref{T2} complements the multiplicity result proved in \cite[Theorem 1.1]{PSY}.

For the superlinear case, with $q\in(p,p^{\ast})$, we need to set the parameters
\begin{equation}\label{parametri}
m_{N,p,s}:=\min\left\{\frac{N-p}{p-1},p(1-s)\right\},\qquad\beta_{N,q,p}:=N-q\frac{N-p}{p}.
\end{equation}
Thus, we have the following multiplicity result, strongly depending on $\Omega$.

\begin{theorem}\label{T1}
Let $N>p$, $0<s<1$ and $2\leq p<q<p^{\ast}$ satisfy $m_{N,p,s}>\beta_{N,q,p}$.

Then, there exists $\lambda_{{\ast}{\ast}}>0$ such that for any $\lambda\in(0,\lambda_{{\ast}{\ast}})$ problem \eqref{P} admits $\text{cat}_\Omega(\Omega)$ nontrivial solutions.
\end{theorem}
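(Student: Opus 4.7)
The plan is to adapt the Benci--Cerami category approach to the mixed local-nonlocal critical setting. First, I frame problem \eqref{P} variationally on a mixed-order space $\mathbb{X}$ controlling both $\|\nabla u\|_p^p$ and the Gagliardo seminorm $[u]_{s,p}^p$, through the energy functional
\[
J_\lambda(u) = \frac{1}{p}\|\nabla u\|_p^p + \frac{1}{p}[u]_{s,p}^p - \frac{\lambda}{q}\|u\|_q^q - \frac{1}{p^{\ast}}\|u\|_{p^{\ast}}^{p^{\ast}},
\]
and look for critical points on the Nehari manifold $\mathcal{N}_\lambda = \{u \in \mathbb{X}\setminus\{0\} : \langle J'_\lambda(u), u\rangle = 0\}$. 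Since $p<q<p^{\ast}$, each non-zero direction has a unique Nehari projection, so $\mathcal{N}_\lambda$ is a $C^1$-manifold and $J_\lambda|_{\mathcal{N}_\lambda}$ is coercive and bounded from below. A Brezis--Lieb/concentration-compactness argument then yields a local Palais--Smale property: $(PS)_c$ holds on $\mathcal{N}_\lambda$ for every $c<c^{\ast}:=\tfrac{1}{N}\mathcal{S}^{N/p}$, since the nonlocal seminorm is weakly lower semicontinuous and the compactness threshold is dictated solely by the local Sobolev constant $\mathcal{S}$.

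The central analytical step is to prove the fine energy estimate $c_\lambda := \inf_{\mathcal{N}_\lambda} J_\lambda < c^{\ast}$ for all small $\lambda$. Plugging into $J_\lambda$ the Nehari projection of a truncated Talenti instanton $U_{\varepsilon,y}$ concentrated at $y\in\Omega$, the standard local expansions give $\|\nabla U_{\varepsilon,y}\|_p^p = \mathcal{S}^{N/p}+O(\varepsilon^{(N-p)/(p-1)})$, $\|U_{\varepsilon,y}\|_{p^{\ast}}^{p^{\ast}} = \mathcal{S}^{N/p}+o(1)$, and $\|U_{\varepsilon,y}\|_q^q \asymp \varepsilon^{\beta_{N,q,p}}$, while the new nonlocal correction obeys $[U_{\varepsilon,y}]_{s,p}^p = O(\varepsilon^{p(1-s)})$, up to a logarithmic factor in borderline cases. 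Taking the minimum of the two error exponents produces the rate $\varepsilon^{m_{N,p,s}}$, and the standing hypothesis $m_{N,p,s}>\beta_{N,q,p}$ ensures that this combined correction is strictly dominated by the negative contribution $-c\lambda\varepsilon^{\beta_{N,q,p}}$ coming from the $q$-term. Choosing $\varepsilon\asymp\lambda^{1/(p^{\ast}-q)}$ then gives $c_\lambda<c^{\ast}$ for every $\lambda\in(0,\lambda_{\ast\ast})$ with a suitably small $\lambda_{\ast\ast}>0$.

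Finally, I deploy the Lusternik--Schnirelman category machinery. Fix $r>0$ so small that $\Omega$ is a retract of its open $r$-neighborhood $\Omega_r$, and define the concentration map $\Phi_\lambda:\Omega\to\mathcal{N}_\lambda^{c^{\ast}}$ by sending $y$ to the Nehari projection of $U_{\varepsilon,y}$; the previous estimate guarantees that $\Phi_\lambda$ lands in $\mathcal{N}_\lambda^{c^{\ast}} := \{u\in\mathcal{N}_\lambda : J_\lambda(u)<c^{\ast}\}$. On the other side, the barycenter map
\[
\beta(u):=\frac{1}{\|u\|_{p^{\ast}}^{p^{\ast}}}\int_\Omega x\,|u(x)|^{p^{\ast}}\,dx
\]
is continuous on $\mathcal{N}_\lambda^{c^{\ast}}$ and takes values in $\Omega_r$ for $\lambda$ sufficiently small, via a concentration-compactness statement showing that any $u\in\mathcal{N}_\lambda$ whose energy is close to $c^{\ast}$ must concentrate at a single point of $\overline{\Omega}$. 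Since $\beta\circ\Phi_\lambda$ is homotopic to the inclusion $\Omega\hookrightarrow\Omega_r$, the classical category inequality yields at least $\text{cat}_\Omega(\Omega)$ critical points of $J_\lambda|_{\mathcal{N}_\lambda}$, hence the same number of nontrivial solutions of \eqref{P}.

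I expect the main obstacle to be the sharp quantitative control of the nonlocal correction $[U_{\varepsilon,y}]_{s,p}^p$: the Talenti profile decays only polynomially as $|x|^{-(N-p)/(p-1)}$, which is borderline for the Gagliardo seminorm, so the truncation has to be calibrated carefully and the two branches of the minimum defining $m_{N,p,s}$ emerge exactly from balancing the contributions of the singular core and of the slowly decaying tail. The assumption $p\geq 2$ enters precisely at this stage, as it enables the convexity-type algebraic inequalities needed to pass from pointwise asymptotics to sharp seminorm estimates for the nonlocal term.
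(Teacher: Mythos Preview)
Your approach is essentially the paper's: Nehari manifold, $(PS)_c$ below $\tfrac{1}{N}\mathcal{S}^{N/p}$, energy comparison via truncated Talenti profiles with the nonlocal correction $O(\varepsilon^{m_{N,p,s}})$, and the Benci--Cerami barycenter/category scheme. The paper uses a minor variant for the concentration map---rather than Nehari-projecting Talenti bubbles centered at each $y$, it fixes the radial Nehari minimizer $u_{\lambda,B_r}$ on a ball and translates it to points of $\Omega_r^-$---but this is cosmetic.

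One correction worth making: your attribution of the hypothesis $p\geq 2$ is off. The Gagliardo estimate $[u_\varepsilon]_{s,p}^p=O(\varepsilon^{m_{N,p,s}})$ (Lemma~\ref{lemmagaglia}) is proved for all $p>1$ using only the crude convexity bound $|a+b|^p\leq 2^{p-1}(|a|^p+|b|^p)$ and a first-order Taylor expansion; no sharp algebraic inequalities specific to $p\geq 2$ are needed, and no logarithmic factors arise. The paper invokes $p\geq 2$ instead so that $\psi_\lambda(u)=\langle J_\lambda'(u),u\rangle$ is of class $C^2$, making $\mathcal{N}_\lambda$ a genuine $C^1$-manifold on which the constrained Lusternik--Schnirelman theory (as in \cite{W}) applies directly. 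A smaller point: under $m_{N,p,s}>\beta_{N,q,p}$ the strict inequality $\inf_{\mathcal{N}_\lambda}J_\lambda<c^\ast$ actually holds for \emph{every} $\lambda>0$ (fix $\lambda$ and let $\varepsilon\to 0$), so no coupling $\varepsilon\asymp\lambda^{1/(p^\ast-q)}$ is needed; the smallness of $\lambda$ enters only through the barycenter step, exactly as you say.
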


For the proof of Theorem \ref{T1}, we exploit the Lusternik-Schnirelman category theory. Theorem \ref{T1} complements \cite[Theorem 1]{AD} and it is consistent with the pure fractional case in \cite[Theorem 1.1]{FMS} with $p=2$. However, the presence of local terms in \eqref{P} affects the multiplicity in Theorem \ref{T1}. Indeed, when $p=2$, if we consider $N>4-2s$ (which occurs when either $N\geq4$ or $N=3$ and $s>1/2$) then $m_{N,2,s}:=\min\left\{N-2,2(1-s)\right\}=2(1-s)$, so that
$$
m_{N,2,s}>\beta_{N,q,2}\quad\Longrightarrow\quad q>2^{\ast}-\frac{2(2-2s)}{N-2}.
$$
On the other hand, in \cite[Theorem 1.1]{FMS} the authors assume that
$$
N>\frac{2s(q+2)}{q}\quad\Longrightarrow\quad q>2^{\ast}_s-2,
$$
with $2^{\ast}_s=\frac{2N}{N-2s}$. By a simple calculation, we can see that
$$
2^{\ast}-\frac{2(2-2s)}{N-2}>2^{\ast}_s-2,
$$
whenever $N>4-2s$.

Last but not least, we want to investigate \eqref{P} when $\lambda$ is sufficiently large. Indeed, in Theorem \ref{T1} we need to control $\lambda$ in a suitable bounded range. However, we can state the following existence result, covering also the case when $1<p<2$.

\begin{theorem}\label{supex}
Let $N>p$ and $0<s<1<p<q<p^{\ast}$.

Then, problem \eqref{P} admits at least one nontrivial solution, under the following cases:
\begin{itemize}
\item [$(i)$] if $m_{N,p,s}>\beta_{N,q,p}$, for any $\lambda>0$;
\item [$(ii)$] if $m_{N,p,s}\leq\beta_{N,q,p}$, for any $\lambda\geq\lambda^{\ast}$ with a suitable $\lambda^{\ast}>0$.
\end{itemize}
\end{theorem}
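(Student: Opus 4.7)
The plan is to apply the mountain pass theorem to the associated energy functional
\[
J_\lambda(u)=\frac{1}{p}\|\nabla u\|_p^p+\frac{1}{p}[u]_{s,p}^p-\frac{\lambda}{q}\|u\|_q^q-\frac{1}{p^{\ast}}\|u\|_{p^{\ast}}^{p^{\ast}},
\]
defined on the natural mixed Sobolev space associated to $\mathcal{L}_{p,s}$ with zero data on $\mathbb{R}^N\setminus\Omega$. Since $p<q<p^{\ast}$, the mountain pass geometry is standard: by Sobolev embedding the two superlinear terms are negligible near the origin, giving $J_\lambda(u)\geq\alpha>0$ on a small sphere $\|u\|=\rho$, while $J_\lambda(tu_0)\to-\infty$ as $t\to\infty$ for any fixed nontrivial $u_0$ (both $q,p^{\ast}>p$). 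This yields a minimax level $c_\lambda>0$.

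Next I would establish a local Palais-Smale compactness, showing that any $(\mathrm{PS})_c$ sequence with $c<c^{\ast}:=\frac{1}{N}\mathcal{S}^{N/p}$ admits a strongly convergent subsequence. The argument follows the Br\'ezis-Nirenberg scheme adapted to the mixed operator: boundedness comes from combining $J_\lambda(u_n)-\tfrac{1}{q}\langle J_\lambda'(u_n),u_n\rangle$; weak convergence follows routinely; the continuous embedding $W_0^{1,p}(\Omega)\hookrightarrow W^{s,p}(\mathbb{R}^N)$ controls the nonlocal seminorm; and a Lions-type concentration-compactness argument together with the bound $c<c^{\ast}$ rules out mass concentration in the critical term. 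The final passage to strong convergence relies on the $S_+$-property of $\mathcal{L}_{p,s}$, with a Simon-type inequality in the range $1<p<2$.

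The heart of the proof is the strict inequality $c_\lambda<c^{\ast}$, where the two cases diverge. I would test $J_\lambda$ on the rescaled truncated Talenti-Aubin functions $v_\varepsilon=\phi\,U_\varepsilon/\|\phi U_\varepsilon\|_{p^{\ast}}$, with $\phi\in C_c^\infty(\Omega)$ equal to one near a fixed point of $\Omega$. Known asymptotics give
\[
\|\nabla v_\varepsilon\|_p^p=\mathcal{S}+O\!\left(\varepsilon^{(N-p)/(p-1)}\right),\qquad [v_\varepsilon]_{s,p}^p=O\!\left(\varepsilon^{p(1-s)}\right),
\]
whose sum is $\mathcal{S}+O(\varepsilon^{m_{N,p,s}})$, together with $\|v_\varepsilon\|_q^q\sim C\,\varepsilon^{\beta_{N,q,p}}$ in the pertinent range of $q$. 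Optimising over $t\geq 0$ yields
\[
\max_{t\geq 0}J_\lambda(tv_\varepsilon)\leq \frac{1}{N}\mathcal{S}^{N/p}+C_1\,\varepsilon^{m_{N,p,s}}-C_2\,\lambda\,\varepsilon^{\beta_{N,q,p}}.
\]
Under (i), the hypothesis $m_{N,p,s}>\beta_{N,q,p}$ forces the negative term to dominate for $\varepsilon$ small and \emph{every} $\lambda>0$. Under (ii), with $m_{N,p,s}\leq\beta_{N,q,p}$, one freezes $\varepsilon>0$ and absorbs the positive error by taking $\lambda$ above a threshold $\lambda^{\ast}=\lambda^{\ast}(\varepsilon)$. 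In both scenarios $c_\lambda<c^{\ast}$, and the mountain pass theorem produces the nontrivial critical point.

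The principal obstacle is the asymptotic control of the fractional Gagliardo seminorm $[v_\varepsilon]_{s,p}^p$. Unlike the local gradient term, which only sees the bump of $U_\varepsilon$, the nonlocal seminorm integrates over all pairs $(x,y)\in\mathbb{R}^{2N}$ and has to be decomposed into near-field and far-field contributions, then matched against the polynomial decay $U(x)\sim|x|^{-(N-p)/(p-1)}$. This balance produces the exponent $m_{N,p,s}=\min\{(N-p)/(p-1),\,p(1-s)\}$ as the correct leading order of the error on the local-nonlocal side, so that the dichotomy $m_{N,p,s}\gtrless\beta_{N,q,p}$ is sharp within this method and justifies the separation between (i) and (ii). A secondary delicate point is the regime $1<p<2$, already accounted for in the $(\mathrm{PS})$ step, which is why (ii) is needed to cover those parameters not reached by (i).
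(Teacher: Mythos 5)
Your overall strategy matches the paper's exactly (mountain pass theorem, local Palais–Smale up to $\tfrac{1}{N}\mathcal S^{N/p}$, truncated Aubin–Talenti testers, and the mixed estimates for the gradient and Gagliardo seminorms). Case $(i)$ is fine in spirit, and your accounting of the combined error $\mathcal{S}+O(\varepsilon^{m_{N,p,s}})$ is correct even though your stated bound $[v_\varepsilon]_{s,p}^p=O(\varepsilon^{p(1-s)})$ is slightly too strong when $(N-p)/(p-1)<p(1-s)$; what Lemma~\ref{lemmagaglia} actually gives is $O(\varepsilon^{m_{N,p,s}})$, and that is what matters.

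The genuine gap is in case $(ii)$. First, the lower bound $\|v_\varepsilon\|_q^q\geq C\varepsilon^{\beta_{N,q,p}}$ (Lemma~\ref{lemmanormaq}) is only available for $q>p^{\ast}(1-1/p)$, which is guaranteed \emph{exactly} when $m_{N,p,s}>\beta_{N,q,p}$ (Remark~\ref{osservazione}) — i.e.\ in case $(i)$. In case $(ii)$ this may fail, so the $\varepsilon^{\beta_{N,q,p}}$ term you write down need not be present in the first place. Second, and more fundamentally, the estimate
\[
\max_{t\geq 0}J_\lambda(tv_\varepsilon)\leq \frac{1}{N}\mathcal{S}^{N/p}+C_1\varepsilon^{m_{N,p,s}}-C_2\,\lambda\,\varepsilon^{\beta_{N,q,p}}
\]
with a $\lambda$-independent $C_2$ cannot be right: the left-hand side is nonnegative (take $t=0$) but the right-hand side tends to $-\infty$ as $\lambda\to\infty$. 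The reason the maximum does \emph{not} scale linearly downward in $\lambda$ is that the maximizing $t=\overline t_{\lambda,\varepsilon}$ shrinks as $\lambda$ grows, which saturates the decrease. The paper's case $(ii)$ argument takes exactly this into account: from $\psi_\lambda(\overline t_{\lambda,\varepsilon}v_\varepsilon)=0$ one obtains that $\{\overline t_{\lambda,\varepsilon}\}_\lambda$ is bounded and $\overline t_{\lambda,\varepsilon}\to 0$ as $\lambda\to\infty$ (since otherwise $\lambda\overline t_{\lambda,\varepsilon}^{q-1}\|v_\varepsilon\|_q^q$ would diverge), whence $\sup_{t\geq0}J_\lambda(tv_\varepsilon)=J_\lambda(\overline t_{\lambda,\varepsilon}v_\varepsilon)\to 0$, dropping below $\tfrac{1}{N}\mathcal{S}^{N/p}$ for $\lambda\geq\lambda^{\ast}(\varepsilon)$. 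This replaces your formal inequality; without it, case $(ii)$ is unproved. Finally, your remark that $(ii)$ exists to ``cover'' $1<p<2$ is misplaced: both $(i)$ and $(ii)$ admit $1<p$, and the dichotomy is purely in the sign of $m_{N,p,s}-\beta_{N,q,p}$, not in the range of $p$.
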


We observe that the  dichotomy in Theorem \ref{supex} is strongly affected by the presence of the nonlocal term in \eqref{P}. Indeed, when $s=1$ we get $m_{N,p,1}=0$ so that always $m_{N,p,1}\leq\beta_{N,q,p}$ from \eqref{parametri}, since $q<p^{\ast}$.
Furthermore, as well explained in Remark \ref{osservazione}, the joint requests $p<q<p^{\ast}$ and $m_{N,p,s}>\beta_{N,q,p}$ imply that
$$
q>\max\left\{p,p^{\ast}-\frac{p}{p-1}\right\},
$$
which is assumed in \cite[Theorem 3.3]{GA2}. For this, Theorem \ref{supex} complements the classical situation contained in \cite[Theorems 3.2 and 3.3]{GA2} and generalizes \cite[Theorem 1.5]{BDVV}.

The paper is organized as follows. In Section \ref{sec2} we introduce the variational setting of \eqref{P} and we provide the compactness property of the related energy functional. In Section \ref{sec3} we prove Theorem \ref{T3}. In Section \ref{sec4} we first set the eigenvalues of \eqref{kir1} and then we prove Theorem \ref{T2}. In Section \ref{sec5} we study the existence result of Theorem \ref{supex}. In Section \ref{sec6} we deal with Theorem \ref{T1}.

\section{Variational setting}\label{sec2}

From now on, without further mentioning, we suppose that $\Omega\subset\mathbb R^N$ is an open bounded set with smooth boundary and dimension $N\geq2$. For any $r\in[1,\infty)$ and any $E\subseteq\mathbb R^N$, we denote by $L^r(E)$ and $W^{1,r}(E)$ the usual Lebesgue and Sobolev spaces, endowed with the norms $\|\cdot\|_r$ and $\|\cdot\|_r+\|\nabla\cdot\|_r$, respectively.

We recall the homogeneous Sobolev space $W_0^{1,p}(\Omega)$ as the closure of $C_0^\infty(\Omega)$ with respect to the norm $\|\nabla\cdot\|_p$. However, in order to handle the boundary condition in \eqref{P}, we need to work with functions set in the whole space $\mathbb R^N$. For this, $W_0^{1,p}(\Omega)$ is not enough and we consider the following functional space
$$
\mathrm{X}_0(\Omega)=\left\{u\in W^{1,p}(\mathbb R^N):\,\,u|_{\Omega}\in W_0^{1,p}(\Omega)\mbox{ and }u=0\mbox{ a.e. in }\mathbb R^N\setminus\Omega\right\}
$$
endowed with the norm
$$
\|u\|_{\mathrm{X}_0}=\left(\|\nabla u\|_p^p+[u]_{s,p}^p\right)^{1/p},
$$
where $\left[\,\cdot\,\right]_{s,p}$ stands for the Gagliardo seminorm, given by
$$
\displaystyle [u]_{s,p}=\left(\iint_{\mathbb R^{2N}}\frac{|u(x)-u(y)|^p}{|x-y|^{N+ps}}\,dx\,dy\right)^{1/p}.
$$
It is well known that $\mathrm{X}_0(\Omega)$ is a real separable and uniformly convex space. Also, by \cite[Lemma 2.1]{BDD} we have the following relation between the gradient seminorm and the Gagliardo one.

\begin{lemma}\label{lemmaBDD}
There exists a constant $\mathrm{c}=\mathrm{c}(N,p,s,\Omega)>0$ such that
$$
\displaystyle \iint_{\mathbb R^{2N}}\frac{|u(x)-u(y)|^p}{|x-y|^{N+ps}}\,dx\,dy\leq \mathrm{c}\int_\Omega|\nabla u|^p\,dx,\qquad\mbox{for any }u\in \mathrm{X}_0(\Omega).
$$
\end{lemma}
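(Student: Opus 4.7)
The plan is to control the Gagliardo double integral by splitting it according to whether $|x-y|$ is small or large, handling the near-diagonal part via a pointwise bound using $\nabla u$ and the far-field part via the fact that $u$ is supported in the bounded set $\Omega$ so that Poincaré applies. By density, it suffices to prove the inequality for $u\in C_c^\infty(\Omega)$, and then extend to the whole $\mathrm{X}_0(\Omega)$ by lower semicontinuity of the Gagliardo seminorm under $\mathrm{X}_0$-convergence (or by approximation with uniform bounds).

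For the near-diagonal region $|x-y|\leq 1$, I would use the representation $u(x)-u(y)=\int_0^1\nabla u(y+t(x-y))\cdot(x-y)\,dt$, valid for smooth $u$. Applying Jensen's inequality in $t$, one gets $|u(x)-u(y)|^p\leq |x-y|^p\int_0^1|\nabla u(y+t(x-y))|^p\,dt$. Substituting $z=x-y$, then $w=y+tz$, and using Fubini, the $w$-integral reconstructs $\|\nabla u\|_p^p$ while the $z$-integral reduces to $\int_{|z|\leq 1}|z|^{p-N-ps}\,dz$, which is finite precisely because $p-ps>0$. This yields a bound of the form $C(N,p,s)\|\nabla u\|_p^p$ for the near-diagonal contribution.

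For the far region $|x-y|>1$, I would bound $|u(x)-u(y)|^p\leq 2^{p-1}(|u(x)|^p+|u(y)|^p)$ and use symmetry to reduce to estimating $\int_{\mathbb R^N}|u(x)|^p\bigl(\int_{|x-y|>1}|x-y|^{-N-ps}\,dy\bigr)\,dx$. The inner integral is a finite constant $C(N,p,s)$, leaving a bound of the form $C(N,p,s)\|u\|_p^p$. Since $u$ vanishes outside the bounded domain $\Omega$, the Poincaré inequality in $W^{1,p}_0(\Omega)$ yields $\|u\|_p^p\leq C(\Omega,p)\|\nabla u\|_p^p$, absorbing the far contribution into the desired right-hand side.

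Combining the two regions produces the constant $\mathrm{c}=\mathrm{c}(N,p,s,\Omega)$. The step that most plainly needs care is the near-diagonal calculation, because one must track the integrability exponent $p-N-ps$ carefully to ensure convergence of the $z$-integral near the origin — this is exactly where the condition $s<1$ is crucial. The use of boundedness of $\Omega$ (through Poincaré) is likewise essential: without it, the far-field estimate would not close back to $\|\nabla u\|_p^p$, which is consistent with the well-known fact that the Gagliardo seminorm is generally not controlled by the gradient on unbounded domains.
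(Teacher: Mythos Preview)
Your argument is correct. The near-diagonal estimate via the integral form of the mean value theorem and Fubini is the standard one, and the key integrability check $\int_{|z|\leq 1}|z|^{p-N-ps}\,dz<\infty$ is exactly equivalent to $s<1$, as you note. The far-field part is handled cleanly by the decay of the kernel together with Poincar\'e, which is where the dependence of the constant on $\Omega$ enters.

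As for comparison with the paper: the paper does not actually prove this lemma. It is stated with a direct reference to \cite[Lemma~2.1]{BDD} and no argument is given. Your proof is therefore a self-contained replacement for that citation; the approach you use (near/far splitting, gradient representation plus Poincar\'e) is in fact the standard one and is essentially what one finds in \cite{BDD} or in the classical references on fractional Sobolev embeddings. One minor stylistic point: the density/approximation step is unnecessary once you observe that both sides of the inequality are continuous (indeed, part of the norm) on $\mathrm{X}_0(\Omega)$, so the inequality passes to the closure directly.
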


A function $u\in \mathrm{X}_0(\Omega)$ is called a (weak) solution of problem \eqref{P} if
$$
\displaystyle \int_\Omega|\nabla u|^{p-2}\nabla u\cdot\nabla\varphi \,dx+\iint_{\mathbb R^{2N}}\mathcal{A}u(x,y)\left[\varphi(x)-\varphi(y)\right]\,dx\,dy
=\lambda\int_\Omega |u|^{q-2}u\varphi \,dx+\int_\Omega |u|^{p^{\ast}-2}u\varphi \,dx
$$
for any $\varphi\in \mathrm{X}_0(\Omega)$, where for simplicity
\begin{equation}\label{semigagliar}
 \mathcal{A}u(x,y):=\frac{|u(x)-u(y)|^{p-2}(u(x)-u(y))}{|x-y|^{N+ps}}.
\end{equation}
Hence, problem \eqref{P} has a variational structure, since the (weak) solutions of \eqref{P} are critical points of $J_\lambda:\mathrm{X}_0(\Omega)\to\mathbb R$ set as
$$
J_\lambda(u):=\frac{1}{p}\|\nabla u\|_{\mathrm{X}_0}^p-\frac{\lambda}{q}\|u\|_q^q-\frac{1}{p^{\ast}}\|u\|_{p^{\ast}}^{p^{\ast}}.
$$
We conclude this section studying the compactness properties of $J_\lambda$.
We say that $J_\lambda$ satisfies the Palais-Smale condition at level $c\in\mathbb R$, \textnormal{(PS)$_c$} for short, if any sequence $\{u_n\}_n\subset \mathrm{X}_0(\Omega)$ such that
\begin{align}\label{palais-smale}
	J_\lambda(u_n)\to c \quad \text{and}\quad J^{\prime}_\lambda(u_n)\to 0 \quad\text{in }\left( \mathrm{X}_0(\Omega)\right)^{\ast}\quad\text{as }n\to\infty,
\end{align}
admits a convergent subsequence in $\mathrm{X}_0(\Omega)$.

In order to prove the $(PS)_c$ condition at a suitable range for $c$, we first need to handle the gradient seminorm appearing in $\|\cdot\|_{\mathrm{X}_0}$.

\begin{lemma}\label{lem2.3.01}
Let $\lambda>0$ and let $\{u_n\}_n\subset \mathrm{X}_0(\Omega)$ be a bounded sequence satisfying \eqref{palais-smale} with $c\in\mathbb R$. Then, up to a subsequence, $\nabla u_n(x)\to\nabla u(x)$ a.e. in $\Omega$ as $n\to\infty$.
\end{lemma}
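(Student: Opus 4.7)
The plan is to adapt the Boccardo--Murat truncation technique to the mixed operator $-\Delta_p+(-\Delta_p)^s$. The main obstacle is that a direct test with $u_n-u$ fails because of the critical Sobolev exponent: the term $\int_\Omega|u_n|^{p^{\ast}-2}u_n(u_n-u)\,dx$ need not vanish, due to the possible concentration of $|u_n|^{p^{\ast}}$. The truncation $T_k(u_n-u)$, which is bounded in $L^\infty$ independently of $n$, bypasses this obstruction while still carrying enough information to recover the pointwise convergence of the gradients.

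By reflexivity of $\mathrm{X}_0(\Omega)$ and the compact embeddings $\mathrm{X}_0(\Omega)\hookrightarrow L^r(\Omega)$ for $r\in[1,p^{\ast})$, from the boundedness of $\{u_n\}_n$ I extract a subsequence with $u_n\rightharpoonup u$ weakly in $\mathrm{X}_0(\Omega)$, strongly in every subcritical $L^r(\Omega)$, and almost everywhere in $\Omega$. Fix $k>0$ and use $\varphi_n:=T_k(u_n-u)\in\mathrm{X}_0(\Omega)$, with $T_k(t)=\max\{-k,\min\{t,k\}\}$, as test function in the Palais--Smale identity. Since $T_k$ is $1$-Lipschitz, $\{\varphi_n\}_n$ is bounded in $\mathrm{X}_0(\Omega)$, hence $\langle J'_\lambda(u_n),\varphi_n\rangle=o(1)$. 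The right-hand side tends to zero by dominated convergence: $\|\varphi_n\|_\infty\le k$ on the bounded set $\Omega$ and $\varphi_n\to 0$ a.e., whence $\varphi_n\to 0$ in every $L^r(\Omega)$ with $r<\infty$, which pairs by H\"older with the bounded sequences $|u_n|^{q-1}\in L^{q/(q-1)}(\Omega)$ and $|u_n|^{p^{\ast}-1}\in L^{p^{\ast}/(p^{\ast}-1)}(\Omega)$.

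The core step is to exploit the left-hand side of the identity. Since $\nabla\varphi_n=\chi_{\{|u_n-u|<k\}}\nabla(u_n-u)$, adding and subtracting the corresponding $u$-pieces rewrites the local term as
$$
\int_{\{|u_n-u|<k\}}\bigl(|\nabla u_n|^{p-2}\nabla u_n-|\nabla u|^{p-2}\nabla u\bigr)\cdot\nabla(u_n-u)\,dx+o(1),
$$
the $o(1)$ coming from the strong $L^{p/(p-1)}$ convergence of $\chi_{\{|u_n-u|<k\}}|\nabla u|^{p-2}\nabla u$ paired with the weak $L^p$ convergence to $0$ of $\nabla(u_n-u)$. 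An analogous splitting for the nonlocal term uses that $\iint_{\mathbb R^{2N}}\mathcal{A}u(x,y)[\varphi_n(x)-\varphi_n(y)]\,dx\,dy$ is the action of a fixed element of $(\mathrm{X}_0(\Omega))^{\ast}$ on $\varphi_n\rightharpoonup 0$, hence also $o(1)$. The two remaining quantities are both non-negative: locally by the standard monotonicity of $\xi\mapsto|\xi|^{p-2}\xi$; nonlocally because $T_k$ is non-decreasing, so that $T_k(u_n-u)(x)-T_k(u_n-u)(y)$ carries the same sign as $(u_n-u)(x)-(u_n-u)(y)$, which in turn matches the sign of $\mathcal{A}u_n(x,y)-\mathcal{A}u(x,y)$. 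Their sum being $o(1)$ forces each of them to vanish.

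From here, the classical pointwise inequality for $|\xi|^{p-2}\xi$ (in its two flavours depending on whether $p\ge 2$ or $1<p<2$) together with H\"older gives $\nabla u_n\to\nabla u$ in measure on $\{|u_n-u|<k\}$ for every fixed $k$. Since $|\{|u_n-u|\ge k\}|\le k^{-1}\|u_n-u\|_1\to 0$ as $n\to\infty$, a diagonal argument in $n$ and $k$ produces convergence in measure on $\Omega$, and passing to a further subsequence yields the claimed a.e. convergence. I expect the most delicate point to be the preservation of the nonlocal monotonicity after composition with the truncation; this is what makes the scheme genuinely different from the purely local setting, and it works precisely because $T_k$ is monotone.
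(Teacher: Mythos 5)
Your proposal is correct, and it follows the same overall truncation strategy as the paper, but with a cleaner execution that actually simplifies two of the paper's steps.

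The most notable improvement is your treatment of the critical term $\int_\Omega|u_n|^{p^{\ast}-2}u_n T_k(u_n-u)\,dx$. You observe that $T_k(u_n-u)$ is bounded by $k$ on the finite-measure set $\Omega$ and converges to $0$ a.e., hence by dominated convergence $T_k(u_n-u)\to 0$ strongly in $L^{p^{\ast}}(\Omega)$; since $|u_n|^{p^{\ast}-1}$ is merely bounded in the conjugate space $L^{p^{\ast}/(p^{\ast}-1)}(\Omega)$, H\"older then forces the whole integral to vanish as $n\to\infty$, for each fixed $k$. The paper does not exploit this: it only bounds this integral by $C\kappa$ (their \eqref{ineq2}), and consequently has to carry the $\kappa$ through a further estimate — splitting $\Omega$ into $S_n^\kappa$ and $G_n^\kappa$, applying H\"older with an auxiliary exponent $\theta\in(0,1)$, and only sending $\kappa\to 0^+$ at the very end. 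Your observation removes all of that machinery: once the local and nonlocal ``difference'' terms are both non-negative and their sum is $\mathrm{o}_n(1)$, each vanishes separately for fixed $k$, and combining $\int_{\{|u_n-u|<k\}}e_n\,dx\to 0$ with $|\{|u_n-u|\ge k\}|\to 0$ (Chebyshev from $u_n\to u$ in $L^1$) gives $e_n\to 0$ in measure directly — no diagonalization in $k$ is even needed. Your second improvement is the proof of the pointwise non-negativity of the nonlocal term: you note that $T_k$ is non-decreasing, so $T_k(u_n-u)(x)-T_k(u_n-u)(y)$ has the same sign as $(u_n-u)(x)-(u_n-u)(y)$, which in turn has the same sign as $\mathcal{A}u_n(x,y)-\mathcal{A}u(x,y)$ by monotonicity of $t\mapsto|t|^{p-2}t$. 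This one-line argument replaces the paper's four-case dissection of $\mathbb{R}^{2N}$ in the Claim and is genuinely more transparent. The final passage from $e_n\to 0$ a.e.\ to $\nabla u_n\to\nabla u$ a.e.\ via the two Simon-type inequalities (depending on whether $p\ge 2$ or $1<p<2$) is standard and matches the paper's conclusion.
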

\begin{proof}
Since $\{u_n\}_n$ is bounded in $\mathrm{X}_0(\Omega)$, there exists a subsequence, still denoted by $\{u_n\}_n$, and $u\in \mathrm{X}_0(\Omega)$ such that
\begin{equation}\label{ter1.1}
\begin{array}{ll}
u_n\rightharpoonup u \quad \text{ in } \quad \mathrm{X}_0(\Omega),\quad & \nabla u_n\rightharpoonup\nabla u \mbox{ in } \left[L^p(\Omega)\right]^N, \\
|u_n(x)|\leq h(x) \,\text{ a.e. in }\, \Omega,\\
u_n\rightarrow u\mbox{ in } L^r(\Omega), & u_n(x)\to u(x)\hspace{0.5mm}\text{ a.e. in } \Omega,
\end{array}
\end{equation}
as $n\to\infty$ with $r\in[1,p^{\ast})$ and $h\in L^p(\Omega).$

For any $\kappa\in\mathbb N$, let $T_\kappa:\mathbb{R}\to\mathbb{R}$ be the truncation function defined by
$$
		T_\kappa(t):=
		\begin{cases}
			t & \text{if } |t|\leq \kappa,\\[1ex]
			\displaystyle \kappa\frac{t}{|t|} &\text{if }|t|> \kappa.
		\end{cases}
$$
Let $\kappa\in\mathbb N$  be fixed.
Then, by \eqref{palais-smale} we have
\begin{align}\label{ineq1}
\text{o}_n (1)= & \left\langle J_\lambda^{\prime}\left(u_n\right), T_\kappa\left(u_n-u\right)\right\rangle \nonumber \\
= & \int_{\Omega}\left|\nabla u_n\right|^{p-2} \nabla u_n \cdot \nabla T_\kappa\left(u_n-u\right) dx
+\iint_{\mathbb{R}^{2N}}\mathcal{A}u_n(x,y)[T_\kappa(u_n-u)(x)-T_\kappa(u_n-u)(y)]\,dx\,dy\nonumber\\
& -\lambda \int_{\Omega}\left|u_n\right|^{q-2} u_n T_\kappa\left(u_n-u\right) dx-\int_{\Omega}\left|u_n\right|^{p^{\ast}-2} u_n T_\kappa\left(u_n-u\right) dx ,
\end{align}
as $n\to\infty$, since $\{T_k(u_n-u)\}_n$ is bounded in $\mathrm{X}_0(\Omega)$.
By H\"older's inequality and \eqref{ter1.1} we get
\begin{equation}
\begin{aligned}\label{nul1}
   &\lim_{n \rightarrow \infty} \int_{\Omega}|\nabla u|^{p-2} \nabla u \cdot \nabla T_\kappa\left(u_n-u\right)dx=0
   \\
   &\lim_{n \rightarrow \infty} \iint_{\mathbb{R}^{2N}}\mathcal{A}u(x,y)[T_\kappa(u_n-u)(x)-T_\kappa(u_n-u)(y)]\,dx\,dy=0.
\end{aligned}
\end{equation}
By the boundedness of $\{u_n\}_n$ and \eqref{sob1} we have
\begin{equation}\label{ineq2}
\left|\int_{\Omega} u_n^{p^{\ast}-2} u_n T_\kappa\left(u_n-u\right) dx \right|\leq \kappa \int_{\Omega} |u_n|^{p^{\ast}-1} dx \leq C \kappa,
\end{equation}
with a constant $C>0$ independent of $n$ and $\kappa$.
Thus, by using  \eqref{ineq1}-\eqref{ineq2} we get
\begin{align}\label{equality1}
\limsup _{n \rightarrow \infty} & {\left[\int_{\Omega}\left[\left|\nabla u_n\right|^{p-2} \nabla u_n-|\nabla u|^{p-2} \nabla u\right] \cdot \nabla T_\kappa\left(u_n-u\right) dx\right.} \nonumber\\
& \left.+\iint_{\mathbb{R}^{2N}}\left[\mathcal{A}u_n(x,y)-\mathcal{A}u(x,y)\right]\left[T_\kappa(u_n-u)(x)-T_\kappa(u_n-u)(y)\right]\,dx\,dy\right] \\
= & \limsup _{n \rightarrow \infty} \int_{\Omega}\left|u_n\right|^{p^{\ast}-2} u_n T_\kappa\left(u_n-u\right) dx \nonumber.
\end{align}
Now, we prove the following claim.

\vspace{0.3cm}
\noindent\textbf{Claim.} {\em For any $(x,y)\in\mathbb R^{2N}$ we have that
\begin{equation}\label{ineq3}
\left[\mathcal{A}u_n(x,y)-\mathcal{A}u(x,y)\right]\left[T_\kappa(u_n-u)(x)-T_\kappa(u_n-u)(y)\right]\geq 0.
\end{equation}}

\noindent
For this, we split $\mathbb R^{2N}$ in 4 sets
$$
\begin{aligned}
S_1&=\left\{(x,y)\in\mathbb R^{2N}:\,\,|(u_n-u)(x)|\leq\kappa,\quad|(u_n-u)(y)|\leq\kappa\right\}\\
S_2&=\left\{(x,y)\in\mathbb R^{2N}:\,\,|(u_n-u)(x)|\leq\kappa<|(u_n-u)(y)|\right\}\\
S_3&=\left\{(x,y)\in\mathbb R^{2N}:\,\,|(u_n-u)(y)|\leq\kappa<|(u_n-u)(x)|\right\}\\
S_1&=\left\{(x,y)\in\mathbb R^{2N}:\,\,|(u_n-u)(x)|>\kappa,\quad|(u_n-u)(y)|>\kappa\right\}
\end{aligned}
$$
and we study case by case.

\vspace{0.3cm}
$\bullet$\, {\em Case 1. Let $(x,y)\in S_1$.} We have that
$$
\begin{aligned}
\left[\mathcal{A}u_n(x,y)-\mathcal{A}u(x,y)\right]&\left[T_\kappa(u_n-u)(x)-T_\kappa(u_n-u)(y)\right]\\
&=\left[\mathcal{A}u_n(x,y)-\mathcal{A}u(x,y)\right]\left[(u_n-u)(x)-(u_n-u)(y)\right]
\geq0
\end{aligned}
$$
where the last inequality follows by the well-known Simon's inequalities, see \cite[formula (2.2)]{Simon-1978}.

\vspace{0.3cm}
$\bullet$\, {\em Case 2. Let $(x,y)\in S_2$.} From $|(u_n-u)(x)|\leq\kappa<|(u_n-u)(y)|$, we have
$$
T_\kappa(u_n-u)(x)-T_\kappa(u_n-u)(y)=\left\{
\begin{matrix}
(u_n-u)(x)-\kappa\leq0,& {\mbox{ if $u_n(x)\geq u(x)$ and $u_n(y)\geq u(y)$}}\\
(u_n-u)(x)+\kappa\geq0,& {\mbox{ if $u_n(x)\geq u(x)$ and $u_n(y)<u(y)$}}\\
(u_n-u)(x)-\kappa\leq0,& {\mbox{ if $u_n(x)< u(x)$ and $u_n(y)\geq u(y)$}}\\
(u_n-u)(x)+\kappa\geq0,& {\mbox{ if $u_n(x)< u(x)$ and $u_n(y)< u(y)$}}.
\end{matrix}
\right.
$$
While, since $-|(u_n-u)(x)|\leq|(u_n-u)(x)|<|(u_n-u)(y)|$ and considering the monotonicity of function $h(t)=|t|^{p-2}t$, we get
$$
\left\{
\begin{matrix}
\mathcal{A}u_n(x,y)-\mathcal{A}u(x,y)\leq0,& {\mbox{ if $u_n(x)\geq u(x)$ and $u_n(y)\geq u(y)$}}\\
\mathcal{A}u_n(x,y)-\mathcal{A}u(x,y)\geq0,& {\mbox{ if $u_n(x)\geq u(x)$ and $u_n(y)<u(y)$}}\\
\mathcal{A}u_n(x,y)-\mathcal{A}u(x,y)\leq0,& {\mbox{ if $u_n(x)< u(x)$ and $u_n(y)\geq u(y)$}}\\
\mathcal{A}u_n(x,y)-\mathcal{A}u(x,y)\geq0,& {\mbox{ if $u_n(x)< u(x)$ and $u_n(y)< u(y)$}}.
\end{matrix}
\right.
$$
Thus, summing up we obtain \eqref{ineq3}.

\vspace{0.3cm}
$\bullet$\, {\em Case 3. Let $(x,y)\in S_3$.} It is specular to the Case 2.

\vspace{0.3cm}
$\bullet$\, {\em Case 4. Let $(x,y)\in S_4$.} It is trivial, since
$$
T_\kappa(u_n-u)(x)-T_\kappa(u_n-u)(y)=\kappa-\kappa=0.
$$
This concludes the proof of the Claim.

\vspace{0.3cm}
Then, by \eqref{ineq3} we obtain
\begin{align}\label{ineq4}
 \nonumber   &\limsup _{n \rightarrow \infty} \int_{\Omega}\left[\left|\nabla u_n\right|^{p-2} \nabla u_n-|\nabla u|^{p-2} \nabla u\right] \cdot \nabla T_\kappa\left(u_n-u\right) dx \\
&\leq \limsup _{n \rightarrow \infty} \int_{\Omega}\left|u_n\right|^{p^{\ast}-2} u_n T_\kappa\left(u_n-u\right) dx \\ \nonumber
&\leq C \kappa.
\end{align}
We set
$$e_n(x):=\left[|\nabla u_n(x)|^{p-2}\nabla u_n(x)-|\nabla u(x)|^{p-2}\nabla u(x)\right]\cdot\nabla(u_n(x)-u(x)).$$
By using again the Simon's inequalities in \cite[formula (2.2)]{Simon-1978}, we see that $e_n(x)\geq 0$ a.e.\, in $\Omega$.
We split $\Omega$ by
$$
		S_n^\kappa=\left\{x\in\Omega:\,\,|u_n(x)-u(x)|\leq \kappa\right\}
		\quad\text{and}\quad
		G_n^\kappa=\left\{x\in\Omega:\,\,|u_n(x)-u(x)|> \kappa\right\},
$$
where $n$, $\kappa \in \mathbb N$ are fixed.
Taking $\theta\in(0,1)$  and using H\"older's inequality as well as the boundedness of $\{e_n\}_n$ along with \eqref{ineq4} gives
\begin{align*}
    \int_{\Omega}e_n^{\theta}\,dx &\leq \left( \int_{S_n^{\kappa}}e_n\, dx\right)^{\theta}|S_n^{\kappa}|^{1-\theta}+ \left( \int_{G_n^{\kappa}}e_n\, dx\right)^{\theta}|G_n^{\kappa}|^{1-\theta}\\
    &\leq (\kappa C)^{\theta} |S_n^{\kappa}|^{1-\theta}+\widetilde{C}^{\,\theta} |G_n^{\kappa}|^{1-\theta}.
\end{align*}
From this, noticing that  $|G_n^{\kappa}|\to 0$ as $n\to \infty$, we get
\begin{equation*}
    0\leq \limsup_{n\to \infty} \int_{\Omega}e_n^{\theta} \,dx\leq (\kappa C)|\Omega|^{1-\theta}.
\end{equation*}
Letting $\kappa\to 0^{+}$, we obtain that $e_n^\theta\to0$ in $L^1(\Omega)$ as $n\to\infty$. Thus, we may assume that $e_n(x)\to 0$ a.e. \,in $\Omega$. Applying Simon's inequalities in \cite[formula (2.2)]{Simon-1978}, we prove the assertion of the lemma.
\end{proof}

In order to apply \eqref{lem2.3.01}, we need the boundedness of $(PS)_c$  sequences, as proved in what follows.

\begin{lemma}\label{limitata}
Let $\lambda>0$. Then, any sequence $\{u_n\}_n\subset \mathrm{X}_0(\Omega)$ satisfying \eqref{palais-smale}, with $c\in\mathbb R$, is bounded.
\end{lemma}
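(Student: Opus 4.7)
The plan is to argue by contradiction: suppose, passing to a subsequence, that $\|u_n\|_{\mathrm{X}_0}\to\infty$ as $n\to\infty$. The hypothesis \eqref{palais-smale} supplies the two identities
$$
\frac{1}{p}\|u_n\|_{\mathrm{X}_0}^p-\frac{\lambda}{q}\|u_n\|_q^q-\frac{1}{p^\ast}\|u_n\|_{p^\ast}^{p^\ast}=c+o(1),
$$
$$
\|u_n\|_{\mathrm{X}_0}^p-\lambda\|u_n\|_q^q-\|u_n\|_{p^\ast}^{p^\ast}=o(\|u_n\|_{\mathrm{X}_0}),
$$
coming from $J_\lambda(u_n)=c+o(1)$ and $\langle J_\lambda'(u_n),u_n\rangle=o(\|u_n\|_{\mathrm{X}_0})$, respectively. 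The goal is to combine them into an inequality of the form $\|u_n\|_{\mathrm{X}_0}^p\leq C+o(\|u_n\|_{\mathrm{X}_0})+C'\|u_n\|_{\mathrm{X}_0}^{\gamma}$ with $\gamma<p$, which contradicts $\|u_n\|_{\mathrm{X}_0}\to\infty$.

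I split the argument according to whether $q<p$ or $q\geq p$. In the sublinear case $q\in(1,p)$, I multiply the first identity by $p^\ast$ and subtract the second in order to kill the critical term, obtaining
$$
\left(\frac{p^\ast}{p}-1\right)\|u_n\|_{\mathrm{X}_0}^p=\lambda\left(\frac{p^\ast}{q}-1\right)\|u_n\|_q^q+p^\ast c+o(1)+o(\|u_n\|_{\mathrm{X}_0}).
$$
Since $q<p^\ast$, the Sobolev embedding $\mathrm{X}_0(\Omega)\hookrightarrow L^q(\Omega)$ yields $\|u_n\|_q^q\leq C\|u_n\|_{\mathrm{X}_0}^q$; because $q<p$ and $p^\ast/p>1$, this is incompatible with $\|u_n\|_{\mathrm{X}_0}\to\infty$.

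In the remaining case $q\in[p,p^\ast)$, I reverse the strategy: multiplying the first identity by $p$ and subtracting the second removes the $\mathrm{X}_0$-term and produces
$$
\lambda\left(1-\frac{p}{q}\right)\|u_n\|_q^q+\left(1-\frac{p}{p^\ast}\right)\|u_n\|_{p^\ast}^{p^\ast}=pc+o(1)+o(\|u_n\|_{\mathrm{X}_0}).
$$
Both coefficients are non-negative, so $\|u_n\|_{p^\ast}^{p^\ast}\leq C+o(\|u_n\|_{\mathrm{X}_0})$. Since $\Omega$ is bounded and $q<p^\ast$, H\"older's inequality gives $\|u_n\|_q^q\leq |\Omega|^{1-q/p^\ast}\|u_n\|_{p^\ast}^q\leq C(C+o(\|u_n\|_{\mathrm{X}_0}))^{q/p^\ast}$. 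Plugging these two estimates back into the second identity produces
$$
\|u_n\|_{\mathrm{X}_0}^p\leq C+C'\|u_n\|_{\mathrm{X}_0}^{q/p^\ast}+o(\|u_n\|_{\mathrm{X}_0}),
$$
and since $q/p^\ast<1<p$ the right-hand side is $o(\|u_n\|_{\mathrm{X}_0}^p)$ under $\|u_n\|_{\mathrm{X}_0}\to\infty$, yielding the contradiction.

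The delicate sub-case is the borderline $q=p$, where the first coefficient in the last displayed equation vanishes and one no longer gets a direct bound on $\|u_n\|_q^q=\|u_n\|_p^p$. This is precisely the reason for controlling $\|u_n\|_p$ through $\|u_n\|_{p^\ast}$ via H\"older (which only uses $|\Omega|<\infty$) rather than through Poincar\'e-type estimates that would force the restriction $\lambda<\lambda_1$. With that observation the proof is uniform in $\lambda>0$ across the three regimes relevant to Theorems \ref{T3}, \ref{T2}, \ref{T1} and \ref{supex}.
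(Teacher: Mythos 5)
Your proof is correct and, for $q\geq p$, takes a genuinely different route from the paper. For $q\in(1,p)$ your choice of the linear combination $p^\ast J_\lambda-\langle J'_\lambda,\cdot\rangle$ is a rescaling of the paper's $J_\lambda-\frac{1}{p^\ast}\langle J'_\lambda,\cdot\rangle$, so that case is essentially identical (the paper simply reads off boundedness directly rather than arguing by contradiction). The difference is in the superlinear and borderline cases. The paper treats $q\in(p,p^\ast)$ with the combination $J_\lambda-\frac{1}{q}\langle J'_\lambda,\cdot\rangle$, which gives a one-line lower bound $(\frac1p-\frac1q)\|u_n\|_{\mathrm{X}_0}^p$; this breaks down at $q=p$, so the paper handles $q=p$ separately with a fairly long contradiction argument tracking the limits of $\|u_n\|_{p^\ast}^{p^\ast}/\|u_n\|_{\mathrm{X}_0}$, $\|u_n\|_{p^\ast}^{p^\ast}/\|u_n\|_{\mathrm{X}_0}^p$, $\|u_n\|_p^p/\|u_n\|_{\mathrm{X}_0}^p$ before forcing $1=0$. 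You instead merge $q=p$ and $q>p$ via the combination $pJ_\lambda-\langle J'_\lambda,\cdot\rangle$, which cancels the $\mathrm{X}_0$-norm entirely; you correctly note that both resulting coefficients are nonnegative (even when the $\|u_n\|_q^q$ one vanishes at $q=p$), so you still get the key a priori estimate $\|u_n\|_{p^\ast}^{p^\ast}\leq C+o(\|u_n\|_{\mathrm{X}_0})$, then control $\|u_n\|_q$ by H\"older on the bounded $\Omega$ and bootstrap back into $\langle J'_\lambda(u_n),u_n\rangle=o(\|u_n\|_{\mathrm{X}_0})$. Your argument is slightly heavier than the paper's for $q\in(p,p^\ast)$ (you run a bootstrap where a one-line estimate would do) but is appreciably cleaner than the paper's $q=p$ case and shows clearly that no restriction on $\lambda$ is needed; the paper's three cases are collapsed to two, and the $q=p$ borderline ceases to be a special case.
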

\begin{proof}
Let $\{u_n\}_n\subset \mathrm{X}_0(\Omega)$ satisfy \eqref{palais-smale}. We divide the proof depending on the value of $q$.

\vspace{0.3cm}
$\bullet$\, {\em Case 1. Let $q\in(1,p)$.} By the Sobolev embedding
$$
J_\lambda\left(u_n\right)-\frac{1}{p^{\ast}}\left\langle J_\lambda^{\prime}\left(u_n\right), u_n\right\rangle
\geq \left(\frac{1}{p}-\frac{1}{p^{\ast}}\right)\left\|u_n\right\|_{\mathrm{X}_0}^p-\lambda C_q\|u_n\|_{\mathrm{X}_0}^q.
$$
From this and \eqref{palais-smale}, there exists $d_\lambda>0$ such that
$$c+d_\lambda\|u_n\|_{\mathrm{X}_0}+\text{o}_n(1)\geq  \left(\frac{1}{p}-\frac{1}{p^{\ast}}\right)\|u_n\|_{\mathrm{X}_0}^p-\lambda C_q\|u_n\|_{\mathrm{X}_0}^q,$$
which yields the boundedness, since $p^{\ast}>p>q>1.$

\vspace{0.3cm}
$\bullet$\, {\em Case 2. Let $q=p$.} Let us assume by contradiction that $\{u_n\}_n$ is unbounded. Then, up to a subsequence, $\|u_n\|_{\mathrm{X}_0}\to \infty$ as $n\to \infty$, and we can assume without loss of generality that $u_n\not\equiv 0$. We observe that  $\big\{u_n\|u_n\|_{\mathrm{X}_0}^{-1}\big\}_n$ is bounded in $\mathrm{X}_0(\Omega)$, so by \eqref{palais-smale}
\begin{align*}
\text{o}_n(1)= \left\langle J^{\prime}_\lambda(u_n),u_n\|u_n\|_{\mathrm{X}_0}^{-1}\right\rangle &=\displaystyle\frac{\|u_n\|_{\mathrm{X}_0}^{ p}-\lambda\|u_n\|_{ p}^{ p}-\|u_n\|_{p^{\ast}}^{p^{\ast}}}{\|u_n\|_{\mathrm{X}_0}}
  \\[0.3cm]
  &=\displaystyle\frac{ p J_\lambda(u_n)+\displaystyle\left(\frac{ p}{p^{\ast}}-1\right)\|u_n\|_{p^{\ast}}^{p^{\ast}}}{\|u_n\|_{\mathrm{X}_0}},\quad\mbox{as}\quad n\to \infty.
\end{align*}
From this combined with \eqref{palais-smale} we get
\begin{align}\label{limites}
    \displaystyle\frac{\|u_n\|_{p^{\ast}}^{p^{\ast}}}{\|u_n\|_{\mathrm{X}_0}}&\to 0,\quad\mbox{as}\quad n\to \infty,\quad
\end{align}
and since $\|u_n\|_{\mathrm{X}_0}\to \infty$ as $n\to \infty$, we have
 \begin{equation}\label{limitess}
       \displaystyle\frac{\|u_n\|_{p^{\ast}}^{p^{\ast}}}{\|u_n\|_{\mathrm{X}_0}^{ p}}\to 0,\quad\mbox{as}\quad n\to \infty.
 \end{equation}
Similarly,
 \begin{equation}\label{limitess2}
       \displaystyle\frac{\|u_n\|_{p^{\ast}}^{ p}}{\|u_n\|_{\mathrm{X}_0}^{ p}}
			=\displaystyle\frac{\|u_n\|_{p^{\ast}}^{ p}}{\|u_n\|_{\mathrm{X}_0}^{ p/p^{\ast}}}\frac{1}{\|u_n\|_{\mathrm{X}_0}^{ p(p^{\ast}-1)/p^{\ast}}}\to 0,\quad\mbox{as}\quad n\to \infty.
 \end{equation}
Finally, we show the following estimate
 \begin{equation}\label{limitess3}
       \displaystyle\frac{\|u_n\|_{ p}^{ p}}{\|u_n\|_{\mathrm{X}_0}^{ p}}\to 0,\quad\mbox{as}\quad n\to \infty.
 \end{equation}
By H\"older's inequality we have
\begin{align*}
    \|u_n\|_{ p}^{ p}&=\int_{\Omega} |u_n|^{ p}\,dx
    \leq \left(\int_{\Omega} |u_n|^{p^{\ast}} \,dx\right)^{ p/p^{\ast}} |\Omega|^{1- p/p^{\ast}}
    \\[0.3cm]
    &=\|u_n\|_{p^{\ast}}^{ p} \,|\Omega|^{1- p/p^{\ast}},
\end{align*}
so that
\begin{equation}\label{para0}
\displaystyle\frac{\|u_n\|_{ p}^{ p}}{\|u_n\|_{\mathrm{X}_0}^{ p}}\leq \displaystyle\frac{\|u_n\|_{p^{\ast}}^{ p}}{{\|u_n\|_{\mathrm{X}_0}^{ p}}}\,|\Omega|^{1-p/p^{\ast}}\to 0,\quad \mbox{as}\quad n\to\infty.
\end{equation}
By \eqref{palais-smale} and the fact that $\|u_n\|_{\mathrm{X}_0}\to \infty$, we get
$$
\left\langle J^{\prime}_\lambda(u_n),u_n\|u_n\|_{\mathrm{X}_0}^{- p} \right\rangle \to 0,\quad\mbox{as}\quad n\to \infty.
$$
Thus, by using \eqref{limitess} and \eqref{limitess3}
\begin{align*}
0=\lim_{n\to \infty} \left\langle  J^{\prime}_\lambda(u_n),u_n\|u_n\|_{\mathrm{X}_0}^{- p}\right\rangle &=
\lim_{n\to \infty}\displaystyle\frac{\|u_n\|_{\mathrm{X}_0}^{ p}-\lambda\|u_n\|_{ p}^{ p}-\|u_n\|_{p^{\ast}}^{p^{\ast}}}{\|u_n\|_{\mathrm{X}_0}^{ p}}
  \\[0.3cm]
  &=\lim_{n\to \infty} \left(1-\displaystyle \lambda \frac{\|u_n\|_{ p}^{p}}{\|u_n\|_{\mathrm{X}_0}^{ p}}-\displaystyle\frac{\|u_n\|_{ p^{\ast}}^{ p^{\ast}}}{\|u_n\|_{\mathrm{X}_0}^{ p}} \right)\\
  & =1,
\end{align*}
which yields the desired contradiction.

\vspace{0.3cm}
$\bullet$\, {\em Case 3. Let $q\in(p,p^{\ast})$.} By \eqref{palais-smale}, there exists $d_\lambda>0$ such that
$$
c+d_\lambda\|u_n\|_{\mathrm{X}_0}+\text{o}_n(1)\geq
J_\lambda\left(u_n\right)-\frac{1}{q}\left\langle J_\lambda^{\prime}(u_n), u_n\right\rangle
    \geq \left(\frac{1}{p}-\frac{1}{q}\right)\left\|u_n\right\|_{\mathrm{X}_0}^p
$$
which gives the desired result.
\end{proof}

\begin{lemma}\label{lemmaPS}
Let $\lambda>0$ and let $\mathcal{S}$ be as in \eqref{sob1}. Then, the $(PS)_c$ condition holds true under the following cases:
\begin{itemize}
\item [$(i)$] for any $c<\frac{1}{N}\mathcal{S}^{N/p}$, if $q\in[p,p^{\ast})$;
\item [$(ii)$] for any
$$c<\frac{1}{N}\mathcal{S}^{\frac{N}{p}}-|\Omega|\left(\frac{1}{p}-\frac{1}{p^{\ast}}\right)^{-\frac{q}{p^{\ast}-q}}\left[\lambda\left(\frac{1}{q}-\frac{1}{p}\right)\right]^{\frac{p^{\ast}}{p^{\ast}-q}}$$
if $q\in(1,p)$.
\end{itemize}
\end{lemma}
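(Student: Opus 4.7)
The plan is to combine the a.e.\ convergence of gradients from Lemma \ref{lem2.3.01} with a Br\'ezis--Lieb splitting, and then to run a standard concentration-compactness dichotomy at the critical level. Fix $\{u_n\}_n\subset\mathrm{X}_0(\Omega)$ satisfying \eqref{palais-smale} at height $c$. By Lemma \ref{limitata} it is bounded, so up to a subsequence $u_n\rightharpoonup u$ in $\mathrm{X}_0(\Omega)$, $u_n\to u$ in $L^r(\Omega)$ for every $r\in[1,p^{\ast})$ and $u_n(x)\to u(x)$ a.e.\ in $\Omega$, while Lemma \ref{lem2.3.01} provides in addition $\nabla u_n(x)\to\nabla u(x)$ a.e. Passing to the limit in $\langle J_\lambda'(u_n),\varphi\rangle\to0$ for every $\varphi\in\mathrm{X}_0(\Omega)$, using these pointwise convergences combined with the weak convergence of $|\nabla u_n|^{p-2}\nabla u_n$, $\mathcal{A}u_n$ and $|u_n|^{p^{\ast}-2}u_n$ in the respective dual spaces (and Rellich compactness for the subcritical term), I would identify $u$ as a weak solution of \eqref{P}; in particular $\langle J_\lambda'(u),u\rangle=0$.

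Setting $v_n:=u_n-u$, the Br\'ezis--Lieb lemma applied to $\|\nabla\cdot\|_p^p$ (made available precisely by Lemma \ref{lem2.3.01}), to $[\,\cdot\,]_{s,p}^p$ (by viewing it as an $L^p$-norm on $\mathbb{R}^{2N}$ of the a.e.\ convergent integrand $(u_n(x)-u_n(y))|x-y|^{-(N+sp)/p}$) and to $\|\cdot\|_{p^{\ast}}^{p^{\ast}}$, combined with $\|u_n\|_q^q\to\|u\|_q^q$ from Rellich, produces
$$
J_\lambda(u_n)=J_\lambda(u)+\frac{1}{p}\|v_n\|_{\mathrm{X}_0}^p-\frac{1}{p^{\ast}}\|v_n\|_{p^{\ast}}^{p^{\ast}}+\text{o}_n(1).
$$
Coupling this with $\langle J_\lambda'(u_n),u_n\rangle=\text{o}_n(1)$ and $\langle J_\lambda'(u),u\rangle=0$ gives $\|v_n\|_{\mathrm{X}_0}^p-\|v_n\|_{p^{\ast}}^{p^{\ast}}=\text{o}_n(1)$, so (passing to a further subsequence) both sequences share a common limit $\ell\geq0$. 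The embedding \eqref{sob1} provides $\mathcal{S}\|v_n\|_{p^{\ast}}^p\leq\|\nabla v_n\|_p^p\leq\|v_n\|_{\mathrm{X}_0}^p$, which forces the alternative $\ell=0$ or $\ell\geq\mathcal{S}^{N/p}$; in the second case $c=J_\lambda(u)+\ell/N\geq J_\lambda(u)+\mathcal{S}^{N/p}/N$, so the proof reduces to bounding $J_\lambda(u)$ from below sharply enough to contradict the hypothesis on $c$.

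For case $(i)$ the identity $J_\lambda(u)-\tfrac{1}{q}\langle J_\lambda'(u),u\rangle=\bigl(\tfrac{1}{p}-\tfrac{1}{q}\bigr)\|u\|_{\mathrm{X}_0}^p+\bigl(\tfrac{1}{q}-\tfrac{1}{p^{\ast}}\bigr)\|u\|_{p^{\ast}}^{p^{\ast}}$ yields $J_\lambda(u)\geq0$ whenever $q\in(p,p^{\ast})$, while $q=p$ collapses to $J_\lambda(u)=\|u\|_{p^{\ast}}^{p^{\ast}}/N\geq0$; either way the assumption $c<\mathcal{S}^{N/p}/N$ forces $\ell=0$. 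In case $(ii)$, using $\|u\|_{\mathrm{X}_0}^p=\lambda\|u\|_q^q+\|u\|_{p^{\ast}}^{p^{\ast}}$ rewrites $J_\lambda(u)=-\lambda\bigl(\tfrac{1}{q}-\tfrac{1}{p}\bigr)\|u\|_q^q+\tfrac{1}{N}\|u\|_{p^{\ast}}^{p^{\ast}}$, and I would estimate the negative term through H\"older's inequality $\|u\|_q^q\leq|\Omega|^{1-q/p^{\ast}}\|u\|_{p^{\ast}}^q$ followed by a weighted Young's inequality with exponents $p^{\ast}/q$ and $p^{\ast}/(p^{\ast}-q)$, calibrating the Young weight so that the resulting $\|u\|_{p^{\ast}}^{p^{\ast}}$ contribution is exactly absorbed by $\tfrac{1}{N}\|u\|_{p^{\ast}}^{p^{\ast}}$; the residual constant then matches the quantity $|\Omega|\bigl(\tfrac{1}{p}-\tfrac{1}{p^{\ast}}\bigr)^{-q/(p^{\ast}-q)}\bigl[\lambda\bigl(\tfrac{1}{q}-\tfrac{1}{p}\bigr)\bigr]^{p^{\ast}/(p^{\ast}-q)}$ appearing in the statement. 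Once $\ell=0$ is forced in either case, $\|v_n\|_{\mathrm{X}_0}\to0$ delivers the desired strong convergence in $\mathrm{X}_0(\Omega)$. I expect the main technical hurdle to be the precise calibration of the Young weight in case $(ii)$ to recover exactly the stated constant; the Br\'ezis--Lieb step for the local gradient term is, by contrast, essentially free once Lemma \ref{lem2.3.01} is in hand, and the splitting of the Gagliardo seminorm reduces to a standard application of Br\'ezis--Lieb on $L^p(\mathbb R^{2N})$.
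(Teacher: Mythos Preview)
Your proof is correct and follows essentially the same strategy as the paper: boundedness from Lemma~\ref{limitata}, Br\'ezis--Lieb splitting enabled by Lemma~\ref{lem2.3.01}, the dichotomy $\ell=0$ versus $\ell\geq\mathcal{S}^{N/p}$, and the H\"older--Young estimate in case $(ii)$. The only cosmetic difference is that you first identify $u$ as a critical point and then subtract $\langle J_\lambda'(u),u\rangle=0$ from $\langle J_\lambda'(u_n),u_n\rangle=\text{o}_n(1)$, whereas the paper bypasses that step by testing $J_\lambda'(u_n)$ directly against $u_n-u$; both routes yield the same relation $\|v_n\|_{\mathrm{X}_0}^p-\|v_n\|_{p^{\ast}}^{p^{\ast}}=\text{o}_n(1)$ and the identical final estimates.
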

\begin{proof}
Since by Lemma \ref{limitata} the sequence $\{u_n\}_n$ is bounded in $\mathrm{X}_0(\Omega)$, there exists $u\in \mathrm{X}_0(\Omega)$ such that, up to a
subsequence still relabeled $\{u_n\}_n$, it follows that
\begin{equation}\label{ter1.2}
\begin{array}{ll}
u_n\rightharpoonup u \quad \text{ in }\quad \mathrm{X}_0(\Omega),\quad & \nabla u_n\rightharpoonup\nabla u \mbox{ in } (L^{p}(\Omega))^N, \\
\nabla u_n(x)\to \nabla u(x),\, &u_n(x)\to u(x)\hspace{0.5mm}\text{ a.e. in } \Omega,\\
u_n\rightarrow u\mbox{ in } L^r(\Omega), & \|u_n-u\|_{p^{\ast}}\to \ell \mbox{ for some }\ell\geq 0,
\end{array}
\end{equation}
as $n\to \infty$, with $r\in [1,p^{\ast})$. Hence, by \eqref{ter1.2}, Lemma \ref{lem2.3.01} and the Br\'ezis-Lieb Lemma, we get
\begin{equation}\label{ter1.3}
\begin{array}{ll}
&\|\nabla u_n\|_p^p-\|\nabla u_n-\nabla u\|_p^{p}=\|\nabla u\|_p^p+\text{o}_n(1)
 \\
&[u_n]_{s,p}^{p}-[u_n-u]_{s,p}^{p}=[u]_{s,p}^{p}+\text{o}_n(1)
\\
&\|u_n\|_{p^{\ast}}^{p^{\ast}}-\|u_n-u\|_{p^{\ast}}^{p^{\ast}}=\|u\|_{p^{\ast}}^{p^{\ast}}+\text{o}_n(1),
\end{array}
\end{equation}
as $n\to \infty$.

Furthermore, as shown in the proof of~\cite[Lemma~2.4]{CaPu}, by \eqref{ter1.2}
the sequence $\{\mathcal U_n\}_n$,
defined in $\mathbb R^{2N}\setminus\mbox{Diag\,}(\mathbb R^{2N})$ by
\begin{equation*}
(x,y)\mapsto\mathcal U_n(x,y)=\frac{|u_n(x)-u_n(y)|^{p-2}[u_n(x)-u_n(y)]}{|x-y|^{(N+ps)/p^{\prime}}},
\end{equation*}
is bounded in $L^{p^{\prime}}(\mathbb R^{2N})$ as well as $\mathcal U_n\to\mathcal U$ a.e. in $\mathbb R^{2N}$, where
\begin{equation*}
\mathcal U(x,y)
=\frac{|u(x)-u(y)|^{p-2}
[u(x)-u(y)]}{|x-y|^{(N+ps)/p^{\prime}}}.
\end{equation*}
Thus, going if necessary to a further subsequence, we get that $\mathcal U_n\rightharpoonup
\mathcal U$ in $L^{p^{\prime}}(\mathbb R^{2N})$ as $n\to\infty$ and so, using notation in \eqref{semigagliar}
\begin{equation}\label{x1}
\lim_{n\to\infty}\iint_{\mathbb R^{2N}}\mathcal Au_n(x,y)[\varphi(x)-\varphi(y)]\,dx\,dy=\iint_{\mathbb R^{2N}}\mathcal Au(x,y)[\varphi(x)-\varphi(y)]\,dx\,dy
\end{equation}
for any $\varphi\in \mathrm{X}_0(\Omega)$, since
$$|\varphi(x)-\varphi(y)|\cdot|x-y|^{-(N+ps)/p}\in L^p(\mathbb R^{2N}).$$

Thus, by \eqref{palais-smale}, \eqref{ter1.2} and \eqref{x1} we get
\begin{align*}
\text{o}_n (1)= & \left\langle J_\lambda^{\prime}\left(u_n\right), \left(u_n-u\right)\right\rangle  \\
= & \int_{\Omega}\left|\nabla u_n\right|^{p-2} \nabla u_n \cdot \nabla \left(u_n-u\right) dx
+\iint_{\mathbb{R}^{2N}}\mathcal Au_n(x,y)[(u_n-u)(x)-(u_n-u)(y)]\,dx\,dy\\
& -\lambda \int_{\Omega}\left|u_n\right|^{q-2} u_n \left(u_n-u\right) dx-\int_{\Omega}\left|u_n\right|^{p^{\ast}-2} u_n \left(u_n-u\right) dx
\\
=& \|\nabla u_n\|_p^p-\|\nabla u\|_p^p+[u_n]_{s,p}^p-[u]_{s,p}^p-\|u_n\|_{p^{\ast}}^{p^{\ast}}+\|u\|_{p^{\ast}}^{p^{\ast}}+\text{o}_n(1)
\end{align*}
as $n\to \infty$. Then, by using \eqref{ter1.2} and \eqref{ter1.3}, we have
\begin{equation}\label{ter1.4}
\lim_{n\to\infty}\left(\|\nabla u_n-\nabla u\|_p^p+[u_n-u]_{s,p}^p\right)=\lim_{n\to\infty}\|u_n-u\|_{p^{\ast}}^{p^{\ast}}=\ell^{p^{\ast}}.
\end{equation}
From this, by using \eqref{sob1} we get $\ell^{p^{\ast}}\geq \mathcal{S}\ell^{p}$.

Let us assume by contradiction that $\ell>0$, so that
\begin{equation}\label{ter1.5}
 \ell \geq \mathcal{S}^{\frac{1}{p^{\ast}-p}}.
\end{equation}
Now, by estimating
\begin{align*}
   J_\lambda(u_n)-\frac{1}{p}\langle J^{\prime}(u_n),u_n \rangle=\left(\frac{1}{p}-\frac{1}{p^{\ast}}\right)\|u_n\|_{p^{\ast}}^{p^{\ast}}-\lambda\left( \frac{1}{q}-\frac{1}{p}\right)\|u_n\|_q^q
\end{align*}
we consider the two cases $(i)$ and $(ii)$.

\vspace{0.3cm}
$\bullet$\, {\em Case (i).}
By \eqref{ter1.2}, \eqref{ter1.3} and \eqref{ter1.5}, we obtain
$$
c \geq\left(\frac{1}{p}-\frac{1}{p^{\ast}}\right)\left(\ell^{p^{\ast}}+\|u\|_{p^{\ast}}^{p^{\ast}}\right)\geq\left(\frac{1}{p}-\frac{1}{p^{\ast}}\right) \mathcal{S}^{\frac{p^{\ast}}{p^{\ast}-p}}
$$
which gives the desired contradiction. Hence, $\ell=0$ and by \eqref{ter1.3} we get $u_n\to u$ in $\mathrm{X}_0(\Omega)$.

\vspace{0.3cm}
$\bullet$\, {\em Case (ii).}
By \eqref{ter1.2}, \eqref{ter1.3}, H\"older and Young's inequalities
$$
\begin{aligned}
c &\geq\left(\frac{1}{p}-\frac{1}{p^{\ast}}\right)\left(\ell^{p^{\ast}}+\|u\|_{p^{\ast}}^{p^{\ast}}\right)-\lambda\left(\frac{1}{q}-\frac{1}{p}\right)\|u\|_q^q \\
&\geq\left(\frac{1}{p}-\frac{1}{p^{\ast}}\right)\left(\ell^{p^{\ast}}+\|u\|_{p^{\ast}}^{p^{\ast}}\right)-\lambda\left(\frac{1}{q}-\frac{1}{p}\right)|\Omega|^{\frac{p^{\ast}-q}{p^{\ast}}}\|u\|_{p^{\ast}}^q \\
&\geq\left(\frac{1}{p}-\frac{1}{p^{\ast}}\right)\left(\ell^{p^{\ast}}+\|u\|_{p^{\ast}}^{p^{\ast}}\right)-\left(\frac{1}{p}-\frac{1}{p^{\ast}}\right)\|u\|_{p^{\ast}}^{p^{\ast}}
-|\Omega|\left(\frac{1}{p}-\frac{1}{p^{\ast}}\right)^{-\frac{q}{p^{\ast}-q}}\left[\lambda\left(\frac{1}{q}-\frac{1}{p}\right)\right]^{\frac{p^{\ast}}{p^{\ast}-q}},
\end{aligned}
$$
which gives the desired contradiction using \eqref{ter1.5}. Hence, again \eqref{ter1.3} with $\ell=0$ yields $u_n\to u$ in $\mathrm{X}_0(\Omega)$, concluding the proof.

\end{proof}

\section{Sublinear case}\label{sec3}
In this section, we prove Theorem \ref{T3}.  For this, we assume along the section that $q\in(1,p)$ without further mentioning. Firstly, we recall the definition of genus and some its fundamental properties; see \cite{Rab} for more details.

Let $E$ be a Banach space and $A$ a subset of $E$. We say that $A$ is symmetric if $u\in A$ implies that $-u\in A$. For a closed symmetric set $A$ which does not contain the origin, we define the genus $\gamma(A)$ of $A$ as the smallest integer $k$ such that there exists an odd mapping $\phi\in C^0(A;\mathbb R^k\setminus \{0\})$. If there does not exist such a $k$, we put  $\gamma(A)=\infty$. Moreover, we set  $\gamma(\emptyset)=0$.

Then we have the following result.
\begin{proposition}\label{prop3.1}
Let $A$ and $B$ be closed symmetric subsets of $E$ which do not contain the origin. Then we have:
\begin{enumerate}
\item[$(i)$] if there exists an odd continuous mapping from $A$ to $B$, then $\gamma(A)\leq \gamma(B)$;
\item[$(ii)$] if there is an odd homeomorphism from $A$ onto $B$, then $\gamma(A)= \gamma(B)$;
\item[$(iii)$] if $\gamma(B)<\infty$, then $\gamma(A\setminus B)\geq \gamma(A)-\gamma(B)$;
\item[$(iv)$] the $k$-dimensional sphere $\mathbb{S}^{k}$ has a genus of $k+1$ by the Borsuk-Ulam Theorem;
\item[$(v)$] if $A$ is compact, then $\gamma(A)<\infty$ and there exist $\delta>0$ and a closed and symmetric neighborhood $N_{\delta}(A)=\{x\in E:\,\, \text{dist}(x,A)\leq \delta \}$ of $A$ such that $\gamma(N_{\delta}(A))=\gamma(A)$.
\end{enumerate}
\end{proposition}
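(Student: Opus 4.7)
The plan is to verify the five items of Proposition \ref{prop3.1} by direct application of the definition of the Krasnoselskii genus together with a small toolkit: composition of odd maps, the Tietze extension theorem combined with an oddification trick, partitions of unity, and the Borsuk-Ulam theorem. All five properties reduce to producing, or conversely ruling out, an odd continuous map into a punctured Euclidean space; I would follow the classical presentation in \cite{Rab}.

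Items $(i)$ and $(ii)$ are almost immediate: for $(i)$, given an odd $\phi\in C^0(B;\mathbb R^{\gamma(B)}\setminus\{0\})$ and an odd continuous $f\colon A\to B$, the composition $\phi\circ f$ is odd and continuous into $\mathbb R^{\gamma(B)}\setminus\{0\}$, whence $\gamma(A)\le\gamma(B)$; $(ii)$ follows by applying $(i)$ to the homeomorphism and to its inverse. For item $(iii)$, I would employ the standard doubling construction. Set $k=\gamma(B)$ and $m=\gamma(\overline{A\setminus B})$; extend an odd continuous $\phi\colon B\to\mathbb R^k\setminus\{0\}$ via Tietze to a continuous map $\hat\phi\colon E\to\mathbb R^k$ and oddify through $\tilde\phi(x)=\tfrac12(\hat\phi(x)-\hat\phi(-x))$, so that $\tilde\phi|_B=\phi$. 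The set $A_0=\{x\in A:\tilde\phi(x)=0\}$ is closed, symmetric, and contained in $\overline{A\setminus B}$, hence $\gamma(A_0)\le m$. Taking an analogously extended and oddified $\tilde\psi$ arising from an odd continuous map $A_0\to\mathbb R^m\setminus\{0\}$, the assignment $x\mapsto(\tilde\phi(x),\tilde\psi(x))$ sends $A$ into $\mathbb R^{k+m}\setminus\{0\}$, which gives $\gamma(A)\le k+m$.

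For $(iv)$, the upper bound $\gamma(\mathbb S^k)\le k+1$ follows from the odd inclusion $\mathbb S^k\hookrightarrow\mathbb R^{k+1}\setminus\{0\}$; the lower bound is a direct application of Borsuk-Ulam, since any odd continuous map $\mathbb S^k\to\mathbb R^j\setminus\{0\}$ with $j\le k$ would, after normalization, produce a forbidden odd continuous map $\mathbb S^k\to\mathbb S^{j-1}\subseteq\mathbb S^{k-1}$. For item $(v)$, compactness of $A$ together with $0\notin A$ allows a finite covering by balls $B_{r_i}(x_i)$ with $B_{r_i}(x_i)\cap B_{r_i}(-x_i)=\emptyset$; an oddified partition of unity then produces a finite-dimensional odd continuous map, so $\gamma(A)<\infty$, and the same map extends by uniform continuity to a closed symmetric neighborhood $N_\delta(A)$ on which it remains non-vanishing, yielding $\gamma(N_\delta(A))=\gamma(A)$ by combining $(i)$ with the reverse inclusion $A\subseteq N_\delta(A)$.

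The main obstacle I anticipate is item $(iii)$, because of the careful bookkeeping required to interweave Tietze extension, the oddification procedure, and the control of the zero locus of $\tilde\phi$ relative to $\overline{A\setminus B}$; a secondary delicate point is the neighborhood construction in $(v)$, where one must verify that the odd finite-dimensional map stays bounded away from zero uniformly on some closed symmetric tubular neighborhood of $A$.
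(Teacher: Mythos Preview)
Your proof sketch is mathematically correct and follows the classical presentation, but note that the paper does not prove Proposition~\ref{prop3.1} at all: it is stated as a recalled result, with the sentence ``we recall the definition of genus and some its fundamental properties; see \cite{Rab} for more details'' serving as the only justification. Your argument is essentially the one found in the cited reference, so there is no discrepancy in approach---you have simply supplied what the paper chose to omit.
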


We also need the following deformation lemma, as given in  \cite[Lemma 1.3]{AR}. For this, considering  $I\in C^{1}(E;\mathbb R)$, we set for any $c\in\mathbb R$
\begin{equation}\label{kappac}
K_c=\left\{u\in E:\,\,I^{\prime}(u)=0\mbox{ and }I(u)=c\right\}.
\end{equation}
Also, we use the following notation $I^d=\left\{u\in E:\,\, I(u)\leq d\right\}$ for $d\in\mathbb R$.

\begin{lemma}\label{pro1.0.31}
Let $E$ be an infinite-dimensional Banach space and let $I\in C^{1}(E;\mathbb R)$ be a functional satisfying the $(PS)_c$, with $c\in \mathbb{R}$. Let $U$ be any neighborhood of $K_c$. Then, there exists $\eta\in C^0([0,1]\times E; E)$, with  $\eta_t(x)=\eta (t,x)$, and constants $d_1$, $\varepsilon>0$ with $|c|>d_1$, such that:
\begin{itemize}
\item[$(i)$] $\eta_0= Id_E$;

\item[$(ii)$]$\eta_t(u)=u$  for any   $u\not\in I^{-1}([c-\varepsilon,c+\varepsilon])$  and  $t\in [0,1]$;

\item[$(iii)$] $\eta_t$ is a homeomorphism for any $t\in [0,1]$;

\item[$(iv)$] $I(\eta_t (u))\leq I(u)$ for any $u\in E$  and $t\in [0,1]$;

\item[$(v)$] $\eta_1 (I^{c+d_1}\setminus U)\subseteq I^{c-d_1}$;

\item[$(vi)$] if $K_c =\emptyset$, then $\eta_1 (I^{c+d_1})\subseteq I^{c-d_1}$;

\item[$(vii)$] if $I$ is even, then $\eta_t$ is odd for any $t\in [0,1]$.
\end{itemize}
\end{lemma}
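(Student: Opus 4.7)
\medskip
\noindent\textbf{Proof plan.} The plan is to adapt the classical pseudo-gradient flow argument from Ambrosetti--Rabinowitz. First, I would use the $(PS)_c$ condition to show that $K_c$ as defined in \eqref{kappac} is compact, since any sequence in $K_c$ automatically satisfies \eqref{palais-smale}. Given any neighborhood $U$ of $K_c$, a further application of $(PS)_c$ combined with a diagonal argument yields constants $\sigma,\varepsilon_0>0$ and a smaller neighborhood $U'$ with $\overline{U'}\subset U$ such that
$$
\|I'(u)\|_{E^*}\geq \sigma\qquad\text{for every } u\in I^{-1}([c-\varepsilon_0,c+\varepsilon_0])\setminus U';
$$
otherwise one would extract a $(PS)_c$ sequence staying outside a fixed neighborhood of $K_c$, contradicting compactness.

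On the open set $\{I'\neq 0\}$ I would then invoke the standard existence of a locally Lipschitz pseudo-gradient vector field $V$ satisfying $\|V(u)\|\leq 2\|I'(u)\|_{E^*}$ and $\langle I'(u),V(u)\rangle \geq \|I'(u)\|_{E^*}^2$, chosen odd when $I$ is even. By means of Urysohn-type cut-offs I would produce a locally Lipschitz function $g:E\to[0,1]$ equal to $1$ on $I^{-1}([c-\varepsilon,c+\varepsilon])\setminus U$ for some $\varepsilon<\varepsilon_0$ and vanishing outside a slightly enlarged neighborhood of this set, and define
$$
W(u)=-g(u)\,\frac{V(u)}{1+\|V(u)\|}.
$$
Since $W$ is bounded and locally Lipschitz, the Cauchy problem $\partial_t\eta(t,u)=W(\eta(t,u))$, $\eta(0,u)=u$, admits a unique global solution, which I take as the deformation.

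The verification of (i)--(vii) then proceeds in the usual scheme: (i) and (iii) are immediate from the ODE structure; (ii) holds since $W\equiv 0$ outside $I^{-1}([c-\varepsilon,c+\varepsilon])$; (iv) follows from the direct computation
$$
\frac{d}{dt}I(\eta(t,u))=-g(\eta)\frac{\langle I'(\eta),V(\eta)\rangle}{1+\|V(\eta)\|}\leq 0;
$$
for (v) and (vi), choosing $d_1<\varepsilon$ small enough one shows that any trajectory which starts in $I^{c+d_1}\setminus U$ and stays outside $U$ up to time $1$ experiences an energy drop exceeding $2d_1$, because on such a trajectory the descent rate is bounded below thanks to $\|I'(\eta)\|\geq\sigma$; finally, (vii) is automatic, since $V$ and $g$ can be taken so that $W$ is odd whenever $I$ is even, whence $\eta_t$ is odd.

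The main obstacle I expect is the fine calibration in steps (v)--(vi): the parameters $\varepsilon$, $d_1$ and the nested neighborhoods of $K_c$ must be tuned so that, on the region $\{g=1\}$, one has a uniform lower bound on the descent rate depending only on $\sigma$, while simultaneously ruling out the pathological possibility that a trajectory drifts back into a neighborhood of $K_c$ before crossing below level $c-d_1$. This juggling is delicate but classical, and relies crucially on the compactness of $K_c$ to obtain the uniform distance estimates needed to separate the flow from $K_c$ on the time interval $[0,1]$.
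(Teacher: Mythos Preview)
Your proof plan is the standard pseudo-gradient flow argument and is essentially correct as a sketch of the classical proof. Note, however, that the paper does not actually prove this lemma: it is stated with a direct citation to \cite[Lemma 1.3]{AR}, so there is no in-paper argument to compare against. Your outline is precisely the Ambrosetti--Rabinowitz construction invoked by that reference, so you are aligned with what the paper relies on.
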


Being $q\in(1,p)$, let us note that $J_\lambda$ is not bounded from below in $\mathrm{X}_0(\Omega)$.
Thus, we will apply a truncation argument, inspired by \cite{GA2}.
By \eqref{sob1} and Sobolev embedding, for any $u\in \mathrm{X}_0(\Omega)$
\begin{equation*}
    J_\lambda(u)\geq \displaystyle\frac{1}{p}\|u\|_{\mathrm{X}_0}^{ p}
		- \lambda\frac{\,C_q}{q} \|u\|_{\mathrm{X}_0}^q
		-\frac{\mathcal{S}^{p^{\ast}/p}}{p^{\ast}} \|u\|_{\mathrm{X}_0}^{p^{\ast}}=g_{\lambda}(\|u\|_{\mathrm{X}_0})
\end{equation*}
where
\begin{equation*}
g_\lambda(t)=\displaystyle\frac{1}{ p}t^{ p}-\frac{\lambda\, C_q}{q}  t^q-\frac{\mathcal{S}^{p^{\ast}/p}}{p^{\ast}} t^{p^{\ast}},\quad t\in [0,\infty).
\end{equation*}
Let us fix $R_1>0$ sufficiently small so that
$$\displaystyle\frac{1}{ p}R_1^{ p}-\frac{\mathcal{S}^{p^{\ast}/p}}{p^{\ast}} R_1^{p^{\ast}}>0,
$$
and define
\begin{equation}\label{lambda1}
\lambda_0=\frac{q}{2C_qR_1^q}\left(\displaystyle\frac{1}{ p}R_1^{ p}-\frac{\mathcal{S}^{p^{\ast}/p}}{p^{\ast}} R_1^{p^{\ast}}\right)>0,
\end{equation}
so that $g_{\lambda_0}(R_1)>0$.
Let us set
	\[R_0=\max\left\{t\in(0,R_1):\,\, g_{\lambda_0}(t)\leq0\right\}.
\]
Since  $q< p$, we see that  $g_{\lambda_0}(t)<0$ for $t$ near to $0$, and considering also $g_{\lambda_0}(R_1)>0$, we infer that  $g_{\lambda_0}(R_0)=0$.

Choose $\psi\in C_0^\infty\left([0,\infty)\right)$ such that $0\leq\psi(t)\leq1$, $\psi(t)=1$ for $t\in[0,R_0]$ and  $\psi(t)=0$ for $t\in[R_1,\infty)$.
Thus, we consider the truncated functional
\begin{equation*}
\widetilde{J}_\lambda(u)=\displaystyle\frac{1}{ p}\|u\|_{\mathrm{X}_0}^p- \displaystyle\frac{\lambda}{q} \|u\|_q^q-\displaystyle\frac{\psi(\|u\|_{\mathrm{X}_0}^{ p})}{p^{\ast}}\|u\|_{p^{\ast}}^{p^{\ast}}, \quad u\in \mathrm{X}_0(\Omega).
\end{equation*}
It immediately follows that $\widetilde{J}_\lambda(u)\to\infty$ as $\|u\|_{\mathrm{X}_0}\to\infty$. Hence, $\widetilde{J}_\lambda$ is
coercive and bounded from below.

\begin{lemma}\label{lem2.3.2}
There exists $\lambda_{\ast}>0$ such that, for any $\lambda\in(0,\lambda_{\ast})$ we have:
\begin{itemize}
    \item[$(i)$] if $\widetilde{J}_\lambda(u)\leq0$ then $\|u\|< R_0$, and $\widetilde{J}_\lambda(v)=J_\lambda(v)$ for any $v$ in a small neighborhood of $u$;
    \item[$(ii)$]   $\widetilde{J}_\lambda$ satisfies $(PS)_c$ condition for any $c<0$.
\end{itemize}
\end{lemma}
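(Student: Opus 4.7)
The plan is to set $\lambda_{\ast}:=\min\{\lambda_0,\lambda_1\}$, where $\lambda_0$ is the constant from \eqref{lambda1} and $\lambda_1>0$ is chosen so that for every $\lambda\in(0,\lambda_1)$ the upper threshold
$$
\frac{1}{N}\mathcal{S}^{N/p}-|\Omega|\left(\frac{1}{p}-\frac{1}{p^{\ast}}\right)^{-\frac{q}{p^{\ast}-q}}\left[\lambda\left(\frac{1}{q}-\frac{1}{p}\right)\right]^{\frac{p^{\ast}}{p^{\ast}-q}}
$$
appearing in Lemma~\ref{lemmaPS}(ii) is strictly positive; this is just a one-line algebraic inversion in $\lambda$. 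With such a calibration, part~(i) reduces to a one-variable analysis of the scalar truncation built from $g_\lambda$ and $\psi$, and part~(ii) reduces the $(PS)_c$ condition for $\widetilde{J}_\lambda$ to that of $J_\lambda$ via~(i) plus Lemma~\ref{lemmaPS}(ii).

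For part~(i), I would argue by contrapositive: if $\|u\|_{\mathrm{X}_0}\geq R_0$ then $\widetilde{J}_\lambda(u)>0$. H\"older's inequality and the Sobolev embedding \eqref{sob1} give
$$
\widetilde{J}_\lambda(u)\geq \frac{1}{p}\|u\|_{\mathrm{X}_0}^p-\frac{\lambda C_q}{q}\|u\|_{\mathrm{X}_0}^q-\frac{\psi(\|u\|_{\mathrm{X}_0}^p)\,\mathcal{S}^{p^{\ast}/p}}{p^{\ast}}\|u\|_{\mathrm{X}_0}^{p^{\ast}}=:\widetilde{g}_\lambda(\|u\|_{\mathrm{X}_0}).
$$
Since $\psi\in[0,1]$ enters only through the negative term, $\widetilde{g}_{\lambda_0}\geq g_{\lambda_0}$ pointwise. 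The definition of $R_0$, together with $g_{\lambda_0}(R_0)=0$ and $g_{\lambda_0}(R_1)>0$, forces $g_{\lambda_0}>0$ on $(R_0,R_1]$ by continuity; beyond $R_1$ the truncation kills the critical term, so $\widetilde{g}_{\lambda_0}(t)=\frac{1}{p}t^p-\frac{\lambda_0 C_q}{q}t^q$, which a direct inspection using the explicit formula \eqref{lambda1} shows is strictly positive for all $t\geq R_1$. Because $\lambda\mapsto\widetilde{g}_\lambda(t)$ is strictly decreasing for each fixed $t>0$, every $\lambda<\lambda_0$ yields $\widetilde{g}_\lambda(R_0)>\widetilde{g}_{\lambda_0}(R_0)=0$ and then $\widetilde{g}_\lambda(t)>0$ for all $t\geq R_0$, proving the first assertion. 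The second assertion follows from the continuity of $\|\cdot\|_{\mathrm{X}_0}$: whenever $\|u\|_{\mathrm{X}_0}<R_0$, the plateau behaviour of $\psi$ makes the truncation factor equal to $1$ in a whole $\mathrm{X}_0$-ball around $u$, so $\widetilde{J}_\lambda\equiv J_\lambda$ on that ball.

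For part~(ii), take any sequence $\{u_n\}\subset\mathrm{X}_0(\Omega)$ with $\widetilde{J}_\lambda(u_n)\to c<0$ and $\widetilde{J}'_\lambda(u_n)\to 0$. For $n$ large, $\widetilde{J}_\lambda(u_n)<0$, so by part~(i) $\|u_n\|_{\mathrm{X}_0}<R_0$ and $\widetilde{J}_\lambda$ coincides with $J_\lambda$ together with its derivative in a neighborhood of each $u_n$. Hence $\{u_n\}$ is a $(PS)_c$ sequence for $J_\lambda$; our choice $\lambda<\lambda_1$ ensures that the threshold in Lemma~\ref{lemmaPS}(ii) is strictly positive, so $c<0$ lies automatically below it, and Lemma~\ref{lemmaPS}(ii) delivers a strongly convergent subsequence in $\mathrm{X}_0(\Omega)$.

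The only delicate calibration is tuning $\lambda_0$ small enough to keep $\widetilde{g}_{\lambda_0}$ strictly positive on $[R_1,\infty)$; this is precisely why $\lambda_0$ carries the factor $1/2$ in \eqref{lambda1}, which yields $\frac{1}{p}R_1^p>\frac{\lambda_0 C_q}{q}R_1^q$ and hence positivity beyond $R_1$. Combined with the calibration of $\lambda_1$ coming from Lemma~\ref{lemmaPS}(ii), the rest of the argument is essentially bookkeeping; what makes the whole truncation strategy succeed is that the critical nonlinearity is silenced at high norms, shielding the functional from the noncompactness it would otherwise suffer at positive $(PS)$-levels.
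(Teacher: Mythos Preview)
Your proof is correct and follows essentially the same route as the paper's: a scalar analysis of the truncated lower bound in part~(i), and reduction to Lemma~\ref{lemmaPS}(ii) in part~(ii). The only noteworthy difference is that the paper introduces a third threshold $\lambda_1^{\ast}=\frac{q}{pC_q}R_1^{p-q}$ specifically to force $h_\lambda(t)=\frac{1}{p}t^p-\frac{\lambda C_q}{q}t^q>0$ on $[R_1,\infty)$, whereas you observe that the factor $\tfrac12$ in \eqref{lambda1} already gives $\lambda_0<\lambda_1^{\ast}$, making that extra threshold redundant; this is a valid simplification. Your neighborhood argument in~(i), which passes directly from $\|u\|_{\mathrm{X}_0}<R_0$ to the open sublevel of the norm, is also slightly cleaner than the paper's version (which routes through $\widetilde{J}_\lambda(v)<0$ and implicitly needs the strict inequality $\widetilde{J}_\lambda(u)<0$).
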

\begin{proof}
Let us set
\begin{equation}\label{lambda1stella}
\lambda_1^{\ast}=\frac{q}{pC_q}R_1^{p-q}.
\end{equation}
Also, let us choose $\lambda^{\ast}_2$ sufficiently small so that
\begin{equation}\label{lambda2stella}
    \frac{1}{N} \mathcal{S}^{\frac{N}{p}}-|\Omega|\left(\frac{1}{p}-\frac{1}{p^{\ast}}\right)^{-\frac{q}{p^{\ast}-q}}\left[\lambda^{\ast}_2\left(\frac{1}{q}-\frac{1}{p}\right)\right]^{\frac{p^{\ast}}{p^{\ast}-q}}>0.
\end{equation}
Thus, let us set $\lambda_{\ast}\leq\min\{\lambda_0,\lambda_1^{\ast},\lambda_2^{\ast}\}$, with $\lambda_0$ given in \eqref{lambda1}, and we fix $\lambda\in(0,\lambda_{\ast})$.

\vspace{0.3cm}
$\bullet$\, {\em Case $(i)$.}  Let us assume that $\widetilde{J}_\lambda(u)\leq0$. If  $\|u\|_{\mathrm{X}_0}\geq R_1$, by Sobolev embedding
\begin{equation*}
\widetilde{J}_\lambda(u)\geq\displaystyle\frac{1}{ p}\|u\|_{\mathrm{X}_0}^p-\displaystyle\frac{\lambda\,C_q}{q} \|u\|_{\mathrm{X}_0}^q=h_\lambda(\|u\|_{\mathrm{X}_0})
\end{equation*}
where the continuous function
\begin{equation*}
h_\lambda(t)=\displaystyle\frac{1}{ p}t^{ p}-\frac{\lambda\, C_q}{q}  t^q,\quad t\in [0,\infty),
\end{equation*}
admits just two roots $t_0=0$ and $t_{1,\lambda}=\left(\lambda\,pC_q/q\right)^{1/(p-q)}$, with $h_\lambda(t)\to\infty$ as $t\to\infty$ being $q<p$.
Since $R_1>t_{1,\lambda}$ thanks to $\lambda<\lambda_1^{\ast}$ given in \eqref{lambda1stella}, we get $0\geq\widetilde{J}_\lambda(u)>0$ the desired contradiction.

When $\|u\|_{\mathrm{X}_0}< R_1$, since  $0\leq\psi(t)\leq 1$ and $\lambda<\lambda_0$, we get
$$
0\geq\widetilde{J}_\lambda(u)\geq g_\lambda(\|u\|_{\mathrm{X}_0})\geq g_{\lambda_0}(\|u\|_{\mathrm{X}_0})
$$
which implies that $\|u\|<R_0$, by definition of $R_0$.
Furthermore, by continuity of  $\widetilde{J}_\lambda$ there exists a small neighborhood $U\subset B_{R_0}(0)$ of $u$ such that $\widetilde{J}_\lambda(v)<0$ for any $v\in  U$. Since,  $J_\lambda(v)=\widetilde{J}_\lambda(v)$ for any $v\in U\subset B_{R_0}(0)$, we conclude the proof of the first part.

\vspace{0.3cm}
$\bullet$\, {\em Case $(ii)$.}  Let $c<0$ and let $\{u_n\}_n\subset \mathrm{X}_0(\Omega)$ be a sequence satisfying
\[\widetilde{J}_\lambda(u_n)\rightarrow c\quad\mbox{and}\quad \widetilde{J}^{\prime}_\lambda(u_n)\rightarrow 0,\]
as $n\to \infty$.
Then, for any $n\in\mathbb N$ sufficiently large, we can assume $\widetilde{J}_\lambda(u_n)=J_\lambda(u_n)$ and $\widetilde{J}^{\prime}_\lambda(u_n)=J^{\prime}_\lambda(u_n)$, so that $\{u_n\}_n$ satisfies \eqref{palais-smale}.
Since $\widetilde{J}_\lambda$ is a coercive functional, then the sequence $\{u_n\}_n$ is bounded in $\mathrm{X}_0(\Omega)$. Thus, considering $\lambda<\lambda_2^{\ast}$ given in \eqref{lambda2stella}, we can apply Lemma \ref{lemmaPS} and we conclude the proof.
\end{proof}

 \begin{lemma}\label{lem2.3.3}
Let $\lambda>0$ and let $k\in \mathbb{N}$. Then, there exists $\varepsilon=\varepsilon(\lambda,k)>0$ such that $\gamma(\widetilde{J}_\lambda^{-\varepsilon})\geq k$, where  $\widetilde{J}_\lambda^{-\varepsilon}= \big\{u\in \mathrm{X}_0(\Omega):\,\, \widetilde{J}_\lambda(u)\leq-\varepsilon\big\}$.
 \end{lemma}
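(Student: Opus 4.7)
The plan is to exhibit, inside the sublevel set $\widetilde{J}_\lambda^{-\varepsilon}$, a closed symmetric set that is homeomorphic (via an odd map) to the sphere $\mathbb{S}^{k-1}$, and then invoke the monotonicity of the genus together with part $(iv)$ of Proposition \ref{prop3.1}.

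First, I would pick any $k$-dimensional linear subspace $E_k \subset \mathrm{X}_0(\Omega)$. Since $E_k$ is finite-dimensional, the norms $\|\cdot\|_{\mathrm{X}_0}$, $\|\cdot\|_q$, and $\|\cdot\|_{p^{\ast}}$ are all equivalent on $E_k$. In particular, there exist positive constants $C_k^{(1)},C_k^{(2)}$, depending on $k$, such that
\[
\|u\|_q^q \geq C_k^{(1)}\,\|u\|_{\mathrm{X}_0}^{q},\qquad \|u\|_{p^{\ast}}^{p^{\ast}} \leq C_k^{(2)}\,\|u\|_{\mathrm{X}_0}^{p^{\ast}}
\]
for every $u\in E_k$. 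Therefore, for any $u \in E_k$ with $\|u\|_{\mathrm{X}_0}=\rho$, and assuming $\rho<R_0$ so that $\widetilde{J}_\lambda = J_\lambda$ on the ball $B_\rho(0)$, one gets
\[
\widetilde{J}_\lambda(u) = J_\lambda(u) \leq \frac{1}{p}\,\rho^{p} - \frac{\lambda\,C_k^{(1)}}{q}\,\rho^{q} + \mbox{(nonpositive higher order term)}.
\]

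Now I exploit the sublinearity $q<p<p^{\ast}$: the map
\[
\rho \mapsto \frac{1}{p}\rho^{p} - \frac{\lambda\,C_k^{(1)}}{q}\rho^{q}
\]
is strictly negative for $\rho>0$ sufficiently small (here is where $q<p$ is essential). Hence I can pick $\rho=\rho(\lambda,k)\in(0,R_0)$ small enough and a constant $\varepsilon=\varepsilon(\lambda,k)>0$ such that
\[
\widetilde{J}_\lambda(u) \leq -\varepsilon \qquad \mbox{for every } u\in E_k \mbox{ with } \|u\|_{\mathrm{X}_0}=\rho.
\]
Consequently, the sphere $\Sigma_{\rho,k}:=\{u\in E_k:\,\|u\|_{\mathrm{X}_0}=\rho\}$ is contained in $\widetilde{J}_\lambda^{-\varepsilon}$, and $\Sigma_{\rho,k}$ is closed, symmetric and does not contain the origin.

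To finish, I would observe that $\Sigma_{\rho,k}$ is odd-homeomorphic to $\mathbb{S}^{k-1}$ (via the linear identification of $E_k$ with $\mathbb{R}^k$ composed with radial rescaling). By parts $(i)$, $(ii)$ and $(iv)$ of Proposition \ref{prop3.1}, we get
\[
\gamma(\widetilde{J}_\lambda^{-\varepsilon}) \geq \gamma(\Sigma_{\rho,k}) = \gamma(\mathbb{S}^{k-1}) = k,
\]
which is the required bound. The only delicate point is the bookkeeping that ensures $\rho<R_0$, so that the cutoff $\psi$ in the definition of $\widetilde{J}_\lambda$ is identically $1$ on the ball where our sphere lives; this is immediate upon choosing $\rho$ small, and does not require any compactness or deeper estimate, so no real obstacle arises.
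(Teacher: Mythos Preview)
Your proposal is correct and follows essentially the same argument as the paper: pick a $k$-dimensional subspace, use the equivalence of norms there to bound $\widetilde{J}_\lambda$ from above by $\frac{1}{p}\rho^p-\frac{\lambda C(k)}{q}\rho^q$ on a small sphere of radius $\rho<R_0$ (where the cutoff is inactive), exploit $q<p$ to make this negative, and conclude via the genus of $\mathbb{S}^{k-1}$. The only cosmetic difference is that the paper drops the critical term outright rather than labeling it a ``nonpositive higher order term,'' and cites only parts $(ii)$ and $(iv)$ of Proposition~\ref{prop3.1} where you (more carefully) also invoke $(i)$ for the monotonicity under inclusion.
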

   \begin{proof}
   Fix $\lambda>0$  and $k\in \mathbb{N}$. We can consider  $\mathcal{V}_k$ a $k$-dimensional vectorial subspace of $\mathrm{X}_0(\Omega)$. Then $\|\cdot\|_{\mathrm{X}_0}$, $\|\cdot\|_q$ are equivalent in $\mathcal{V}_k$, so that there exists $C(k)>0$ such that
   \begin{equation}\label{fi3}
      C(k)\|u\|_{\mathrm{X}_0}^q\leq \|u\|_q^q \quad\mbox{for any }u\in \mathcal{V}_k.
   \end{equation}
By \eqref{fi3}, for any $u\in \mathcal{V}_k$ with $\|u\|_{\mathrm{X}_0}\leq R_0$, we get
\begin{equation}\label{tempora}
\widetilde{J}_\lambda(u)=J_\lambda(u)\leq\displaystyle\frac{1}{ p}\|u\|_{\mathrm{X}_0}^p-\displaystyle\frac{\lambda\,C(k)}{q} \|u\|_{\mathrm{X}_0}^q.
\end{equation}
Now, let $\varrho>0$ such that
\begin{equation}\label{raggi}
\varrho<\min\left\{R_0, \left[\frac{\lambda\,C(k)\, p}{q}\right]^{\frac{1}{ p-q}}\right\}
\end{equation}
\noindent
and let $S_\varrho=\big\{u\in \mathcal{V}_k:\,\, \|u\|_{\mathrm{X}_0}= \varrho\big\}$. Clearly $S_\varrho$ is homeomorphic to the  $(k-1)$-dimensional sphere $\mathbb S^{k-1}$.

Furthermore, for any  $u\in S_\varrho$  by \eqref{tempora}
\begin{equation*}
\begin{alignedat}2
\widetilde{J}_\lambda(u)\leq\varrho^q\left(\frac{1}{ p}\varrho^{ p-q}-\frac{\lambda\,C(k)}{q}\right)<0
\end{alignedat}
\end{equation*}
where the last inequality follows by \eqref{raggi}.
Hence, we can find a constant $\varepsilon=\varepsilon(\lambda,k)>0$ such that $\widetilde{J}_\lambda(u)<-\varepsilon$ for any $u\in S_\varrho$.
As a consequence, we have $S_\varrho\subset \widetilde{J}_{\lambda}^{-\varepsilon}\setminus\{0\}$. By parts $(ii)$ and $(iv)$ of Proposition \ref{prop3.1}  we get
$$\gamma(\widetilde{J}_{\lambda}^{-\varepsilon})\geq \gamma(S_\varrho)= k,$$
concluding the proof.
\end{proof}

Let us set the numbers
\begin{equation}\label{ck}
c_k=\displaystyle\inf_{A\in \Sigma_k } \sup_{u\in A} \widetilde{J}_\lambda(u),
\end{equation}
with
$$
\Sigma_k=\left\{A\subset \mathrm{X}_0(\Omega):\,\, A \mbox{ is closed,\, symmetric such that }\,0\not\in A\, \mbox{ and }\,\gamma(A)\geq k \right\}.
$$
Clearly $c_k\leq c_{k+1}$ for any $k\in \mathbb{N}$.

\begin{lemma}\label{negative}
Let $\lambda>0$ and let $k\in\mathbb N$. Then, $c_k<0$.
\end{lemma}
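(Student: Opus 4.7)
The plan is to apply Lemma \ref{lem2.3.3} directly: for a fixed $\lambda>0$ and any $k\in\mathbb N$, that lemma produces an $\varepsilon=\varepsilon(\lambda,k)>0$ with $\gamma(\widetilde{J}_\lambda^{-\varepsilon})\geq k$. I will show that $\widetilde{J}_\lambda^{-\varepsilon}$ is an admissible element of the class $\Sigma_k$ defined in \eqref{ck}, and that its supremum of $\widetilde{J}_\lambda$ is bounded above by $-\varepsilon$.

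First I would verify that $A:=\widetilde{J}_\lambda^{-\varepsilon}=\{u\in \mathrm{X}_0(\Omega):\widetilde{J}_\lambda(u)\leq -\varepsilon\}$ belongs to $\Sigma_k$. Closedness follows from continuity of $\widetilde{J}_\lambda$ on $\mathrm{X}_0(\Omega)$, which is clear from the definition of $\widetilde{J}_\lambda$ (the truncation $\psi$ is continuous, and the norms and Lebesgue integrals involved are continuous on $\mathrm{X}_0(\Omega)$). Symmetry is immediate, since $\widetilde{J}_\lambda(-u)=\widetilde{J}_\lambda(u)$ by the evenness of the $p$-th, $q$-th and $p^*$-th power of $|u|$ and of $\|u\|_{\mathrm{X}_0}$. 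Moreover, $\widetilde{J}_\lambda(0)=0>-\varepsilon$, so $0\notin A$. Combined with $\gamma(A)\geq k$ from Lemma \ref{lem2.3.3}, this gives $A\in\Sigma_k$.

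Then the definition of $c_k$ in \eqref{ck} yields
$$
c_k=\inf_{B\in \Sigma_k}\sup_{u\in B}\widetilde{J}_\lambda(u)\leq \sup_{u\in A}\widetilde{J}_\lambda(u)\leq -\varepsilon<0,
$$
where the last inequality uses that every $u\in A$ satisfies $\widetilde{J}_\lambda(u)\leq -\varepsilon$ by the very definition of $A$. This concludes the argument.

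There is essentially no obstacle: once Lemma \ref{lem2.3.3} is in hand, the statement is a direct consequence of the minimax characterization of $c_k$ and the standard fact that sublevel sets of an even continuous functional are closed and symmetric. The only point to be careful about is to confirm that $0$ does not lie in the sublevel set, which follows from $\widetilde{J}_\lambda(0)=0$ and the strict positivity of $\varepsilon$.
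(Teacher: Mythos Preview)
Your proof is correct and follows essentially the same approach as the paper's own proof: apply Lemma~\ref{lem2.3.3} to obtain $\widetilde{J}_\lambda^{-\varepsilon}\in\Sigma_k$ and then use the minimax definition of $c_k$ to conclude $c_k\le-\varepsilon<0$. The only minor addition in the paper is the remark that $\widetilde{J}_\lambda$ is bounded from below, ensuring $c_k>-\infty$, but this was already established before the lemma and does not affect your argument.
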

\begin{proof}
Fix $\lambda>0$  and $k\in\mathbb N$.
By Lemma \ref{lem2.3.3}, there exists $\varepsilon>0$ such that $\gamma(\widetilde{J}_\lambda^{-\varepsilon})\geq k$. Furthermore,  $\widetilde{J}_\lambda^{-\varepsilon}\in\Sigma_k$ since $\widetilde{J}_\lambda$ is continuous and even. Taking into account that $\widetilde{J}_\lambda(0)=0$, we have $0\not\in\widetilde{J}_\lambda^{-\varepsilon}$ and  $\displaystyle \sup_{u\in \widetilde{J}_\lambda^{-\varepsilon}}\widetilde{J}_\lambda(u)\leq-\varepsilon$. Therefore, remembering also that $\widetilde{J}$ is bounded from below, we conclude
	\[-\infty<c_k=\inf_{A\in\Sigma_k}\sup_{u\in A}\widetilde{J}_\lambda(u)\leq\sup_{u\in \widetilde{J}_\lambda^{-\varepsilon}}\widetilde{J}_\lambda(u)\leq-\varepsilon<0.
\]
\end{proof}

Now, we state the following technical lemma, with
$$
K_c=\left\{u\in \mathrm{X}_0(\Omega):\,\,\widetilde{J}^{\prime}_\lambda(u)=0\,\mbox{ and }\,\widetilde{J}_\lambda(u)=c\right\}.
$$

\begin{lemma}\label{lemma3.4}
Let $\lambda\in(0,\lambda_{\ast})$, with $\lambda_{\ast}$ as in Lemma \ref{lem2.3.2}, and let $k\in\mathbb{N}$. If $c=c_k=c_{k+1}=\cdots =c_{k+m}$, for some $m\in\mathbb{N}$, then
$$\gamma(K_c)\geq m+1.$$
\end{lemma}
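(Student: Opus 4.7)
The plan is to argue by contradiction, combining the deformation lemma (Lemma \ref{pro1.0.31}) with the subadditivity property of the Krasnoselskii genus in Proposition \ref{prop3.1}$(iii)$. Suppose, toward contradiction, that $\gamma(K_c)\leq m$. First I would observe that $K_c$ is compact: by Lemma \ref{negative} we have $c<0$, hence by Lemma \ref{lem2.3.2}$(ii)$ the functional $\widetilde{J}_\lambda$ satisfies $(PS)_c$, which forces $K_c$ to be sequentially compact. Moreover, $0\notin K_c$ because $\widetilde{J}_\lambda(0)=0>c$, and $K_c$ is symmetric since $\widetilde{J}_\lambda$ is even. Proposition \ref{prop3.1}$(v)$ then yields $\delta>0$ and a closed symmetric neighborhood $U:=N_\delta(K_c)$ with $\gamma(U)=\gamma(K_c)\leq m$, and we may shrink $\delta$ so that $0\notin U$.

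Next I would invoke Lemma \ref{pro1.0.31} with this $U$: since $\widetilde{J}_\lambda$ is even, we obtain an odd homeomorphism $\eta:=\eta_1$ of $\mathrm{X}_0(\Omega)$ and constants $d_1,\varepsilon>0$ such that
\[
\eta\big(\widetilde{J}_\lambda^{c+d_1}\setminus U\big)\subseteq \widetilde{J}_\lambda^{c-d_1}.
\]
By the definition of $c_{k+m}=c$ in \eqref{ck}, I can pick $A\in\Sigma_{k+m}$ with $\sup_{u\in A}\widetilde{J}_\lambda(u)<c+d_1$, i.e.\ $A\subset\widetilde{J}_\lambda^{c+d_1}$. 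Consider the closed symmetric set $B:=\overline{A\setminus U}$, which does not contain the origin. By the subadditivity property of Proposition \ref{prop3.1}$(iii)$,
\[
\gamma(B)\geq\gamma(A)-\gamma(U)\geq (k+m)-m=k,
\]
so $B\in\Sigma_k$. Since $\eta$ is an odd homeomorphism, Proposition \ref{prop3.1}$(ii)$ gives $\gamma(\eta(B))=\gamma(B)\geq k$, hence $\eta(B)\in\Sigma_k$.

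Finally, because $B\subset A\setminus U\subset \widetilde{J}_\lambda^{c+d_1}\setminus U$, the deformation inclusion yields $\eta(B)\subset \widetilde{J}_\lambda^{c-d_1}$, so that
\[
c_k\leq\sup_{u\in\eta(B)}\widetilde{J}_\lambda(u)\leq c-d_1<c=c_k,
\]
which is the desired contradiction. The main obstacles are technical rather than conceptual: verifying that the $(PS)_c$ condition at the negative level $c$ indeed gives compactness of $K_c$ (which follows at once from Lemmas \ref{limitata} and \ref{lem2.3.2}$(ii)$), and checking that the neighborhood $U$ supplied by Proposition \ref{prop3.1}$(v)$ can be taken symmetric, closed and avoiding the origin, so that it is a legitimate choice of neighborhood in Lemma \ref{pro1.0.31} and fits into the subadditivity estimate for the genus.
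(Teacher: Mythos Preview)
Your proof is correct and follows essentially the same contradiction argument as the paper: compactness of $K_c$ via $(PS)_c$ at the negative level, thickening to $N_\delta(K_c)$ via Proposition~\ref{prop3.1}$(v)$, the odd deformation from Lemma~\ref{pro1.0.31}, and the subadditivity estimate from Proposition~\ref{prop3.1}$(iii)$ to land back in $\Sigma_k$. One small slip: you write $B\subset A\setminus U$, but in fact $B=\overline{A\setminus U}\supset A\setminus U$; the desired inclusion $\eta(B)\subset\widetilde{J}_\lambda^{c-d_1}$ still holds because $\eta$ is continuous and the sublevel set is closed, exactly as the paper (implicitly) uses.
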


\begin{proof}
Fix $\lambda\in(0,\lambda_{\ast})$ and $k\in\mathbb{N}$.
By Lemma \ref{negative} we have $c=c_k=\cdots =c_{k+m}<0$, so we can apply part $(ii)$ of Lemma \ref{lem2.3.2} to see
that $K_c$ is compact.

Let us assume by contradiction that $\gamma(K_c)\leq m$.
Then, by part $(v)$ of Proposition \ref{prop3.1} there exists $\delta > 0$ such that
$\gamma(N_\delta(K_c))=\gamma(K_c)\leq m$ where $N_\delta(K_c)$ is given as
\[
N_\delta(K_c)=\left\{v\in \mathrm{X}_0(\Omega):\,\, \text{dist}(v,K_c)\leq \delta\right\}.
\]
Thus, by Proposition \ref{pro1.0.31} there exists $\varepsilon\in(0,-c)$ and an odd homeomorphism
$\eta :\mathrm{X}_0(\Omega)\to \mathrm{X}_0(\Omega)$ such that
\begin{equation}\label{e3.5}
\eta(\widetilde{J}_\lambda^{c+\varepsilon}\setminus
N_\delta(K_{c})) \subset
\widetilde{J}_\lambda^{c-\epsilon}.
\end{equation}
By definition of $c=c_{k+m}$ in \eqref{ck}, there exists $A \in
\Sigma_{k+m}$ such that
$$
\displaystyle\sup_{u\in A}\widetilde{J}_\lambda(u) <
c+ \varepsilon,
$$
that is $A\subset\widetilde{J}_\lambda^{c+\varepsilon}$ and so by \eqref{e3.5}
\begin{equation}\label{e3.8}
\eta(A\setminus N_\delta(K_{c}))\subset \eta(\widetilde{J}_\lambda^{c+\epsilon}\setminus
N_\delta(K_{c})) \subset \widetilde{J}_\lambda^{c-\epsilon}.
\end{equation}
On the other hand, by parts $(i)$ and $(iii)$ of Proposition \ref{prop3.1}, we get
$$
\gamma(\eta(\overline{A\setminus N_\delta(K_{c})})) \geq
\gamma(\overline{A\setminus N_\delta(K_{c})}) \geq
\gamma(A) - \gamma(N_\delta(K_{c}))\geq k.
$$
Hence, $\eta(\overline{A\setminus N_\delta(K_{c})}) \in \Sigma_k$, and so
$$
\sup_{u\in \eta(\overline{A\setminus
N_\delta(K_{c})})}\widetilde{J}_\lambda(u) \geq c_k=c
$$
which contradicts \eqref{e3.8}. Thus, we conclude $\gamma(K_c)\geq m+1$.
\end{proof}

Finally, we are in a position to prove Theorem \ref{T3}.

\begin{proof}[Proof of Theorem \ref{T3}]
Let $\lambda\in(0,\lambda_{\ast})$ with $\lambda_{\ast}$ given in Lemma \ref{lem2.3.2}.
By Lemma \ref{negative} we have $c_k<0$.  Hence, from part $(ii)$ of Lemma \ref{lem2.3.2} we know that the functional $\widetilde{J}_\lambda$ satisfies the $(PS)_{c_k}$ condition. Thus, by a standard argument, see \cite{Rab} for example, $c_k$ is a critical value of $\widetilde{J}_\lambda$ for any $k\in\mathbb N$.

We distinguish two situations.
If $-\infty<c_1<c_2<\ldots<c_k<c_{k+1}<\ldots$, then $\widetilde{J}_\lambda$ admits infinitely many critical values.
If there exist $k$, $m\in\mathbb N$ such that $c_{k}=c_{k+1}=\cdots c_{k+m}=c$, then $\gamma(K_c)\geq m+1\geq 2$ by Lemma \ref{lemma3.4}. Thus, the set $K_c$ has infinitely many points, see  \cite[Remark 7.3]{Rab}, which are infinitely many critical values for $\widetilde{J}_\lambda$ by part $(ii)$ of Lemma \ref{lem2.3.2}.

Then, by part $(i)$ of Lemma \ref{lem2.3.2} we get infinitely many negative critical values for $J_\lambda=\widetilde{J}_\lambda$ and so problem \eqref{P} has infinitely many weak solutions.
\end{proof}

%%%%%%%%%%%%%%%%%%%%%%%%%%%%%%%%%%%%%%%%%%%%%%%%%%%%%%%%%%%%%%%%%%%%%%%%%%%%%%%%%%%%%%%%%%%%%%%%%%%%%%%%%%%%%%%%%%%%%%%%%%%

\section{Linear case}\label{sec4}

In this section we prove Theorem \ref{T2}, studying \eqref{P} with $q=p$. For this, we exploit the \textit{cohomological index theory}, introduced by Fadell and Rabinowitz, as set in \cite[Chapter 2]{morse}.

Let $E$ be a Banach space and $\mathcal{B}$ denotes the class of symmetric subsets of $E\setminus\{0\}$. For any  $X\in\mathcal{B}$,  let $\overline{X}=X/\mathbb{Z}_2$ be the quotient space of $\mathcal B$ with each $u$ and $-u$ identified. Define $g:\overline{X}\to\mathbb{R}P^\infty$ to be the classifying map of $\overline{X}$, and
let $g^{\ast}:H^{\ast}(\mathbb{R}P^\infty)\longrightarrow H^{\ast}(\overline{X})$ be the induced homomorphism of the Alexander-Spanier cohomology rings.
The $\mathbb{Z}_2$-cohomological index of $X$ is defined by
$$
i(X):=\left\{
\begin{matrix}
&\hspace{-0.35cm}\sup\left\{k\in\mathbb{N}:\,\, g^{\ast}(\omega^{k-1})\neq 0\right\},& {\mbox{ if } X\neq\emptyset,}\\
&\hspace{-4.7cm}0,& {\mbox{ if } X=\emptyset,}
\end{matrix}
\right.
$$
where $\omega\in H^1(\mathbb{R}P^{\infty})$ is the generator of the polynomial ring $H^{\ast}(\mathbb{R}P^\infty)=\mathbb{Z}_2[\omega]$.

We recall some basic properties of the \textit{cohomological index}, provided in \cite[Propositions 2.12 and 2.14]{morse}.
\begin{proposition}\label{pro1.0.21}
The index $i:\mathcal{B}\longrightarrow \mathbb{N}\displaystyle\cup \{0,\infty\}$ satisfies the following properties:
\begin{itemize}
    \item[$(i)$] if $h:X\rightarrow Y $ is an equivariant map (in particular if $X\subset Y$), then
    $$i(X)\leq i(Y)$$
and equality holds when $h$ is an equivariant homeomorphism;
\item[$(ii)$] if $W$ is a normed linear space and  $X\subset W$ is symmetric and does not contain the origin, then
$$i(X)\leq \text{Dim}(W);$$
\item[$(iii)$] if $U$ is s a bounded closed symmetric neighborhood of the origin in a normed linear space $W$, then
$$i(\partial U)= \text{Dim}(W);$$
\item[$(iv)$] if $X$ is the disjoint union of a pair of subsets $U$, $-U$, then $i(X)=1$. In particular, $i(X)=1$ when $X$ is a finite set;
\item[$(v)$] if $X$ is compact, then $i(X)<\infty$.
\end{itemize}
\end{proposition}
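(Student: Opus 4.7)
The plan is to derive each of the five properties directly from the definition of $i(X)$ via the classifying map $g_X:\overline{X}\to\mathbb{R}P^\infty$ and the structure of the cohomology ring $H^*(\mathbb{R}P^\infty)=\mathbb{Z}_2[\omega]$. All statements are standard facts about the Fadell-Rabinowitz cohomological index; the task is essentially to assemble them in a natural order. Since the proposition is cited verbatim from \cite[Propositions 2.12 and 2.14]{morse}, one could alternatively just invoke this reference, but I will sketch the underlying arguments.

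For $(i)$, an equivariant map $h:X\to Y$ descends to a continuous $\overline{h}:\overline{X}\to\overline{Y}$, and by universality of $\mathbb{R}P^\infty$ as a classifying space for principal $\mathbb{Z}_2$-bundles, the composition $g_Y\circ\overline{h}$ is homotopic to $g_X$. Hence $g_X^{\ast}=\overline{h}^{\ast}\circ g_Y^{\ast}$, so whenever $g_X^{\ast}(\omega^{k-1})\neq 0$ one also has $g_Y^{\ast}(\omega^{k-1})\neq 0$, yielding $i(X)\leq i(Y)$. The homeomorphism case follows by applying the same argument to $h^{-1}$. For $(ii)$, if $\text{Dim}(W)=n$, a linear embedding $W\hookrightarrow\mathbb{R}^\infty$ induces $\mathbb{R}P^{n-1}\hookrightarrow\mathbb{R}P^\infty$ through which $g_X$ factors; since $H^{\ast}(\mathbb{R}P^{n-1})=\mathbb{Z}_2[\omega]/(\omega^n)$, the pullback of $\omega^n$ vanishes and $i(X)\leq n$.

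For $(iii)$, radial normalization $v\mapsto v/\|v\|$ gives an equivariant homeomorphism $\partial U\cong S^{n-1}$; the quotient $\overline{S^{n-1}}=\mathbb{R}P^{n-1}$ classifies its own canonical double cover, so $g_{\partial U}^{\ast}(\omega^{n-1})\neq 0$, and together with $(ii)$ this yields equality. For $(iv)$, when $X=U\sqcup(-U)$ the $\mathbb{Z}_2$-action freely permutes the two sheets, so $\overline{X}$ is homeomorphic to $U$ and the double cover $X\to\overline{X}$ is the trivial bundle $U\times\mathbb{Z}_2\to U$; the classifying map is then null-homotopic, $g_X^{\ast}(\omega)=0$, and combined with $X\neq\emptyset$ we obtain $i(X)=1$. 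Finally, for $(v)$, compactness of $X$ together with the continuity of Alexander-Spanier cohomology on compact pairs, applied to an equivariant finite-dimensional neighborhood retract of $X$ combined with $(i)$ and $(ii)$, forces $i(X)<\infty$. The main obstacle is exactly $(v)$: in a general infinite-dimensional Banach space the compact set $X$ need not sit inside any finite-dimensional subspace, so a delicate continuity argument and an equivariant Dugundji-type extension are needed to reduce to the finite-dimensional setting where $(ii)$ applies.
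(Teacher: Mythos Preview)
The paper does not actually prove this proposition; it is stated as a recall of \cite[Propositions 2.12 and 2.14]{morse} with no argument given. Your proposal goes further by sketching the standard proofs behind each item, and those sketches are essentially correct and follow the classical route (naturality of classifying maps for $(i)$, factoring through $\mathbb{R}P^{n-1}$ for $(ii)$, identifying $\partial U$ with a sphere for $(iii)$, triviality of the double cover for $(iv)$, and a continuity/finite-dimensional reduction for $(v)$). You even anticipate, correctly, that one could simply invoke the reference, which is precisely what the paper does.

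The only place where your sketch is noticeably thin is $(v)$: the phrase ``equivariant finite-dimensional neighborhood retract'' is not quite the mechanism used in the standard proof. The usual argument (as in \cite{morse}) uses the continuity property of Alexander--Spanier cohomology together with the fact that a compact $X$ admits arbitrarily fine equivariant open covers, so that the classifying map factors up to homotopy through a finite-dimensional skeleton of $\mathbb{R}P^\infty$; no embedding of $X$ into a finite-dimensional subspace of $E$ is needed. This is a minor imprecision rather than a genuine gap, and since the paper itself offers no proof at all, your proposal is strictly more informative.
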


Now, we give an abstract critical point result for an even functional $\Phi\in C^1(E;\mathbb R)$.
Let $r>0$ and let $S_r=\left\{u\in E:\,\,\|u\|=r\right\}$. Let $\Gamma$ denote the group of odd
homeomorphisms of $E$ that are the identity outside $\Phi^{-1}(0,d)$ for $0<d\leq \infty$. The pseudo-index of $X\in \mathcal{B}$  related to $i$, $S_r$ and $\Gamma$ is set as
$$i_r^{\ast}(X):=\min_{\gamma\in \Gamma}i(\gamma(X)\cap S_r).
$$
The following critical point result is given in \cite[Theorem 2.2]{PSY}.

\begin{proposition}\label{pro1.0.23}
Let $A_0$, $B_0$ be symmetric subsets of $S_1$ such that $A_0$ is compact in $E$, $B_0$ is closed in $E$, and
$$i(A_0)\geq k+m,\qquad i(S_1\setminus B_0)\leq k$$
for some integers $k\geq 0$ and $m\geq 1$. Assume there exist $b\in \mathbb{R}$ and $R>r>0$ such that
$$\sup_{u\in A} \Phi(u)\leq 0< \inf_{u\in B} \Phi(u),\qquad \sup_{u\in X} \Phi(u) <b, $$
where  $A=\{Ru:\, u\in A_0\}$, $B=\{ru:\,u\in B_0\}$ and $X=\{tu:\, u\in A, t\in [0,1]\}$.
For $j=k+1,\ldots, k+m$ let
$$\mathcal{B}^{\ast}_j=\{M\in \mathcal{B}:\,\, M\mbox{ is compact and }\,i_r^{\ast}(M)\geq j \}$$
and let
$$c_j^{\ast}:=\inf_{M\in \mathcal{B}^{\ast}_j} \sup_{u\in M} \Phi(u).$$
Then,
$$
\inf_{u\in B} \Phi(u)\leq c_{k+1}^{\ast}\leq \ldots\leq c_{k+m}^{\ast}\leq \sup_{u\in X} \Phi(u),$$
in particular $0<c_j^{\ast}<b$. If, in addition, $\Phi$ satisfies the $(PS)_c$ condition for any $c\in(0,b)$, then any $c_j^{\ast}$ is a critical value for $\Phi$ and there are $m$ distinct pairs of associated critical points.
\end{proposition}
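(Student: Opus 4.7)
The plan is to follow the standard equivariant min-max machinery based on the cohomological index and pseudo-index, in the spirit of Fadell-Rabinowitz and of Perera-Szulkin-Yang. First, I would check that the minimax classes $\mathcal{B}^{\ast}_j$ are nonempty for each $j\in\{k+1,\ldots,k+m\}$, so that each $c_j^{\ast}$ is a finite real number. The natural candidate set is $X$ itself: since every $\gamma\in\Gamma$ is the identity outside $\Phi^{-1}(0,d)$, and $\Phi\leq 0$ on $A=\partial X\cap S_R$ while $\Phi(0)=0$, the map $\gamma$ fixes both $A$ and the origin, so a standard Borsuk-type intersection argument combined with $i(A_0)\geq k+m$ yields $i(\gamma(X)\cap S_r)\geq k+m$. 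Hence $i_r^{\ast}(X)\geq k+m$ and $X\in \mathcal{B}^{\ast}_j$ for every $j$ in the range. The chain $c_{k+1}^{\ast}\leq \ldots \leq c_{k+m}^{\ast}$ is then immediate from the nesting $\mathcal{B}^{\ast}_{k+1}\supseteq\ldots\supseteq\mathcal{B}^{\ast}_{k+m}$, and the upper bound $c_{k+m}^{\ast}\leq \sup_X \Phi$ follows by testing with $M=X$.

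For the lower bound $\inf_B\Phi\leq c_{k+1}^{\ast}$, I would show that every compact $M\in\mathcal{B}^{\ast}_{k+1}$ must meet $B$. Taking $\gamma=\mathrm{Id}\in\Gamma$ gives $i(M\cap S_r)\geq k+1$. If $M\cap B$ were empty, then $r^{-1}(M\cap S_r)\subset S_1\setminus B_0$, and the monotonicity in part $(i)$ of Proposition \ref{pro1.0.21} together with $i(S_1\setminus B_0)\leq k$ would force $i(M\cap S_r)\leq k$, a contradiction. Hence $\sup_M\Phi\geq\inf_B\Phi$, which passes to the infimum to give the desired bound.

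The heart of the argument is that each $c_j^{\ast}$ is a critical value. Here I would argue by contradiction: assuming $K_{c_j^{\ast}}=\emptyset$, the $(PS)_c$ hypothesis on $(0,b)$ provides an equivariant deformation lemma yielding an odd homeomorphism $\eta\in\Gamma$ that pushes $\Phi^{c_j^{\ast}+\varepsilon}$ into $\Phi^{c_j^{\ast}-\varepsilon}$. The group structure of $\Gamma$ and monotonicity of the index ensure $\eta$ preserves the pseudo-index, so applying $\eta$ to a near-minimizing $M\in\mathcal{B}^{\ast}_j$ produces another element of $\mathcal{B}^{\ast}_j$ lying entirely below $c_j^{\ast}$, contradicting the definition of $c_j^{\ast}$. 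For the multiplicity, in the case of coincidence $c_{k+1}^{\ast}=\ldots=c_{k+j}^{\ast}=c$, I would deform away from a small symmetric neighborhood $N_\delta(K_c)$ provided by part $(v)$ of Proposition \ref{pro1.0.21}, and invoke the subadditivity of the cohomological index to force $i(K_c)\geq j$. Combined with parts $(ii)$ and $(iv)$ of Proposition \ref{pro1.0.21}, this produces at least $m$ distinct pairs $\pm u_j$ of critical points at level $c$.

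The main obstacle I anticipate is the piercing argument underlying $i_r^{\ast}(X)\geq k+m$, which must exploit the cone structure of $X$, the boundary condition $\Phi\leq 0$ on $A$, and the infinite-dimensional setting where the cohomological index is only subadditive. Managing the constraint "$\eta$ is the identity outside $\Phi^{-1}(0,d)$" in the equivariant deformation lemma, while simultaneously preserving both the pseudo-index estimates and the $\mathbb{Z}_2$-invariance of the minimax classes, requires careful bookkeeping of the sublevel sets and of how $\Gamma$ interacts with the spheres $S_r$ and $S_R$.
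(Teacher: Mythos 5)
The paper does not actually prove this proposition: it is taken verbatim from \cite[Theorem 2.2]{PSY} and used as a black box, so there is no in-paper argument against which to compare. With that caveat, your outline is the standard pseudo-index minimax argument (Benci, Fadell--Rabinowitz, Perera) and it matches what the cited source does. Steps (2)--(4) of your sketch --- the intersection argument for $\inf_B\Phi\leq c_{k+1}^\ast$, the deformation-by-contradiction giving criticality, and the $i(K_c)$ estimate when levels coincide --- are all sound, modulo the small bookkeeping you already flag (compatibility of the deformation with $\Gamma$, i.e.\ that $[c-\varepsilon,c+\varepsilon]\subset(0,d)$, and that $\Gamma$ is a group so applying $\eta$ preserves $i_r^\ast$).

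The step that is genuinely not established in your sketch is the one you rightly call the main obstacle: showing $i_r^\ast(X)\geq k+m$. ``A standard Borsuk-type intersection argument'' is not enough here, because the Borsuk--Ulam/genus intersection property is strictly weaker than what is needed. One must invoke the \emph{piercing property} of the $\mathbb Z_2$-cohomological index (this is precisely one of the features that distinguish $i$ from the Krasnoselskii genus), which says that for the cone $X=\{tRu:u\in A_0,\,t\in[0,1]\}$ and any $\gamma\in\Gamma$ fixing $A$ and the origin, the cut $\gamma(X)\cap S_r$ still carries index $\geq i(A_0)$. This is Proposition~3.36 (the piercing property) in \cite{morse}, and it is the crux of the whole proof of the cited Theorem~2.2: without it the classes $\mathcal B_j^\ast$ could be empty and the $c_j^\ast$ ill-defined. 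So if you intend to give a self-contained proof rather than cite \cite{PSY} or \cite{morse}, you would need to either prove the piercing property of $i$ or cite it explicitly. Also, a minor correction in the multiplicity part: when $c_{k+i}^\ast=\cdots=c_{k+i+l-1}^\ast=c$ the deformation argument yields $i(K_c)\geq l$, and then part $(iv)$ of Proposition~\ref{pro1.0.21} (finite sets have index $1$) forces $K_c$ to be infinite as soon as $l\geq 2$; summing over distinct levels gives at least $m$ pairs.
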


Let us set $A_p$, $B_p\in C^0\left(\mathrm{X}_0(\Omega); (\mathrm{X}_0(\Omega))^{\ast}\right)$ as
\begin{equation}\label{defAp}
\langle A_p(u),v\rangle=p\left[\int_{\Omega} |\nabla u|^{p-2}\nabla u\cdot \nabla v \,dx+\iint_{\mathbb{R}^{2N}}\mathcal{A}u(x,y)[v(x)-v(y)]\,dx\,dy  \right]
\end{equation}
and
\begin{equation}\label{defBp}
\langle B_p(u),v\rangle= p\int_{\Omega}|u|^{ p-2}uv \,dx
\end{equation}
for any $u$, $v\in \mathrm{X}_0(\Omega)$, respectively.
Inspired by \cite[Chapter 1]{morse}, we have the following properties.

\begin{lemma}\label{Ap}
Let $p>1$. Then, we have:
\begin{itemize}
\item[$(A_1)$] $A_p$ is $(p-1)$-homogenous and odd;
\item[$(A_2)$] $A_p$ uniformly positive, that is there exists $\mathrm{c}_0>0$ such that
$$\langle A_p(u),u\rangle\geq \mathrm{c}_0\|u\|_{\mathrm{X}_0}^{ p}\quad \mbox{for any }u\in \mathrm{X}_0(\Omega);$$
\item[$(A_3)$] $A_p$ is a potential operator;
\item[$(A_4)$] $A_p$ is a mapping of $(S)$, that is any sequence $\{u_n\}_n\subset  \mathrm{X}_0(\Omega)$ such that
\begin{equation}\label{tipes}
u_n\rightharpoonup  u,\quad\text{and}\quad \langle A_p(u_n),u_n-u\rangle\to 0,\quad\text{as } n\to \infty
\end{equation}
has a subsequence that converges strongly to $u\in \mathrm{X}_0(\Omega)$;
\item[$(A_5)$] $A_p$ is strictly monotone, that is
\begin{equation}\label{mono}
\langle A_p(u)-A_p(v),u-v \rangle>0\quad \mbox{for any } u,\,v\in \mathrm{X}_0(\Omega),\mbox{ with } u\neq v,
\end{equation}
and the equality holds if and only if $u=v$.
\end{itemize}
\end{lemma}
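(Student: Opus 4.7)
The plan is to treat the five properties one by one: parts $(A_1)$--$(A_3)$ and $(A_5)$ reduce to short structural computations, while the $(S)$-property $(A_4)$ is the one nontrivial point and will require the Simon inequalities applied in both directions.

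For $(A_1)$, the scalar identity $|t\xi|^{p-2}(t\xi)=|t|^{p-2}t\,|\xi|^{p-2}\xi$ inserted pointwise into both integrands of \eqref{defAp} gives $A_p(tu)=|t|^{p-2}t\,A_p(u)$ for every $t\in\mathbb R$, hence $(p-1)$-homogeneity for $t>0$ and, at $t=-1$, oddness. For $(A_2)$, testing \eqref{defAp} with $v=u$ yields
$$\langle A_p(u),u\rangle = p\bigl(\|\nabla u\|_p^p + [u]_{s,p}^p\bigr)=p\|u\|_{\mathrm{X}_0}^p,$$
so $\mathrm{c}_0=p$ works. For $(A_3)$, the natural potential is $\Psi(u):=\|u\|_{\mathrm{X}_0}^p$; I would check by dominated convergence, together with the differentiability of $t\mapsto|t|^p$, that $\Psi\in C^1(\mathrm{X}_0(\Omega);\mathbb R)$ and that its Fr\'echet derivative at $u$ is exactly $A_p(u)$. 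For $(A_5)$, the classical Simon inequalities in \cite[formula (2.2)]{Simon-1978} applied both to $\xi\mapsto|\xi|^{p-2}\xi$ on $\mathbb R^N$ and to its scalar version show that $\langle A_p(u)-A_p(v),u-v\rangle$ is the sum of two nonnegative integrals; equality to zero forces $\nabla u=\nabla v$ a.e.\ in $\Omega$ and $(u-v)(x)=(u-v)(y)$ a.e.\ in $\mathbb R^{2N}$, which together with $u=v=0$ outside $\Omega$ imply $u\equiv v$.

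The main obstacle is $(A_4)$. Assume \eqref{tipes}. Since $A_p(u)\in(\mathrm{X}_0(\Omega))^{\ast}$, with the nonlocal term passing through to the weak limit exactly as in the derivation of \eqref{x1} in the proof of Lemma \ref{lemmaPS}, one has $\langle A_p(u),u_n-u\rangle\to 0$, so by subtraction
$$\langle A_p(u_n)-A_p(u),u_n-u\rangle \to 0.$$
This quantity splits into the two nonnegative pieces
$$p\int_\Omega\bigl(|\nabla u_n|^{p-2}\nabla u_n-|\nabla u|^{p-2}\nabla u\bigr)\cdot\nabla(u_n-u)\,dx$$
and the analogous double integral with $\mathcal{A}u_n-\mathcal{A}u$, each of which must therefore vanish in the limit. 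I would then apply the reverse form of Simon's inequality from \cite[formula (2.2)]{Simon-1978} --- directly for $p\geq 2$, and in its H\"older-complemented form for $1<p<2$, which relies on the uniform $L^p$-bounds on $|\nabla u_n|$ and on the fractional difference quotients coming from the boundedness of $\{u_n\}_n$ in $\mathrm{X}_0(\Omega)$ --- to deduce $\|\nabla u_n-\nabla u\|_p\to 0$ and $[u_n-u]_{s,p}\to 0$, that is, $u_n\to u$ strongly in $\mathrm{X}_0(\Omega)$. I expect the case $1<p<2$ to be the most delicate step, as it requires combining the finer Simon estimate with H\"older's inequality to pass from the monotonicity quantity back to the $\mathrm{X}_0$-norm.
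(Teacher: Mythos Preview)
Your proposal is correct and, for $(A_1)$--$(A_4)$, follows essentially the same line as the paper: both arguments for $(A_4)$ split $\langle A_p(u_n)-A_p(u),u_n-u\rangle$ into its local and nonlocal pieces, observe via Simon's inequalities that each is nonnegative and hence vanishes in the limit, and then convert back to norm convergence. Two minor differences are worth recording. First, the paper shortcuts the nonlocal half by invoking Lemma~\ref{lemmaBDD} to dominate $[u_n-u]_{s,p}^p$ by $\|\nabla(u_n-u)\|_p^p$, so that only the local Simon estimate is actually needed; your plan to treat both pieces directly works just as well and avoids that extra dependence. Second, you are more explicit about the range $1<p<2$, where the reverse Simon estimate must be combined with H\"older and the uniform bound on $\{u_n\}_n$; the paper writes only the $p\geq 2$ form of the inequality.

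For $(A_5)$ the routes genuinely diverge. You argue pointwise via Simon: the integrand in each piece is nonnegative and vanishes only when $\nabla u(x)=\nabla v(x)$, respectively $u(x)-u(y)=v(x)-v(y)$, and the boundary condition then forces $u\equiv v$. The paper instead proceeds through a norm computation: from the H\"older bound $\langle A_p(u),v\rangle\leq p\|u\|_{\mathrm X_0}^{p-1}\|v\|_{\mathrm X_0}$ it deduces $\langle A_p(u)-A_p(v),u-v\rangle\geq p\bigl(\|u\|_{\mathrm X_0}^{p-1}-\|v\|_{\mathrm X_0}^{p-1}\bigr)\bigl(\|u\|_{\mathrm X_0}-\|v\|_{\mathrm X_0}\bigr)\geq 0$, and then analyses the equality case through the equality conditions in H\"older to conclude $u=\alpha v$ with $\alpha=1$. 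Your approach is shorter and more standard; the paper's has the curiosity of never invoking Simon for $(A_5)$ at all, at the cost of a longer equality-case discussion.
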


\begin{proof}
Properties $(A_1)$-$(A_3)$ directly follow from definition \eqref{defAp}.

\vspace{0.3cm}
$\bullet$\, {\em Property $(A_4)$.} Let $\{u_n\}_n\subset \mathrm{X}_0(\Omega)$ be a sequence satisfying \eqref{tipes}.
Then, $\{u_n\}_n$ is bounded in $\mathrm{X}_0(\Omega)$ and so, up to a subsequence,
\begin{equation}\label{seg1.1}
 u_n\rightarrow u\mbox{ in } L^r(\Omega),\, \,r\in[1,p^{\ast}),\qquad u_n\to u\text{ a.e. in } \Omega.
\end{equation}
By \eqref{tipes}, as $n\to\infty$ we have
\begin{align}\label{lamb1}
\text{o}_n (1)= & \left\langle A_p\left(u_n\right),u_n-u\right\rangle \nonumber \\
= &\,p\left[ \int_{\Omega}\left|\nabla u_n\right|^{p-2} \nabla u_n\cdot \nabla \left(u_n-u\right) dx
+\iint_{\mathbb{R}^{2N}}\mathcal{A}u_n(x,y)[(u_n-u)(x)-(u_n-u)(y)]\,dx\,dy\right].
\end{align}
On the other side, still by \eqref{tipes}
\begin{equation}\label{equaly11}
  \int_{\Omega}|\nabla u_n|^{p-2}\nabla u_n\cdot\nabla(u_n-u)dx= \int_{\Omega} \left(|\nabla u_n|^{p-2}\nabla u_n-|\nabla u|^{p-2}\nabla u    \right)\cdot\nabla(u_n-u)dx+\text{o}_n(1),
\end{equation}
and denoting $v_n=u_n-u$
\begin{equation}
\begin{aligned}\label{equality2}
\iint_{\mathbb{R}^{2N}}\mathcal{A}u_n(x,y)[v_n(x)-v_n(y)]\,dx\,dy=&\,\iint_{\mathbb{R}^{2N} }\left[ \mathcal{A}u_n(x,y)-\mathcal{A}u(x,y)\right][v_n(x)-v_n(y)]\,dx\,dy
\\
   &+\iint_{\mathbb{R}^{2N}} \mathcal{A}u(x,y)[v_n(x)-v_n(y)]\,dx\,dy
   \\
=&\,\iint_{\mathbb{R}^{2N} }\left[ \mathcal{A}u_n(x,y)-\mathcal{A}u(x,y)\right][v_n(x)-v_n(y)]\,dx\,dy +\text{o}_n(1),
\end{aligned}
\end{equation}
as $n\to\infty$.
By the well-known Simon's inequalities, see \cite[formula (2.2)]{Simon-1978}, the right-hand sides of \eqref{equaly11} and of \eqref{equality2} are nonnegative, so by \eqref{lamb1} we get in particular
\begin{equation*}
    \lim_{n\to\infty} \int_{\Omega} \left(|\nabla u_n|^{p-2}\nabla u_n-|\nabla u|^{p-2}\nabla u    \right)\cdot\nabla(u_n-u)dx=0.
\end{equation*}
Using again \cite[formula (2.2)]{Simon-1978} and Lemma \ref{lemmaBDD}, we have
\begin{align*}
  \|u_n-u\|_{\mathrm{X}_0(\Omega)}^p&\leq (\mathrm{c}+1)\int_{\Omega}|\nabla u_n-\nabla u|^pdx\\
	&\leq (\mathrm{c}+1) \int_{\Omega} \left(|\nabla u_n|^{p-2}\nabla u_n-|\nabla u|^{p-2}\nabla u    \right)\cdot\nabla(u_n-u)dx\to 0
\end{align*}
as $n\to \infty$. Thus $u_n\to u$ in $\mathrm{X}_0(\Omega)$, concluding the proof.

\vspace{0.3cm}
$\bullet$\, {\em Property $(A_5)$.} By H\"older's inequality, for any $u$, $v\in \mathrm{X}_0(\Omega)$ we have
\begin{equation}\label{dl1}
\begin{aligned}
\langle A_p(u),v\rangle&=p\left[\int_{\Omega} |\nabla u|^{p-2}\nabla u\cdot\nabla v\,dx +\iint_{\mathbb{R}^{2N}}\mathcal{A}u(x,y)(v(x)-v(y))\,dx\,dy\right]\\
&\leq p\left(\|\nabla u\|_p^{p-1}\|\nabla v\|_p+[u]_{s,p}^{p-1}[v]_{s,p}\right)\\
&\leq p\|u\|_{\mathrm{X}_0}^{p-1}\|v\|_{\mathrm{X}_0}
\end{aligned}
\end{equation}
Hence, we get
\begin{align}\label{tenq0}
 \langle A_p(u)-A_p(v), u-v \rangle &=p\|u\|_{\mathrm{X}_0}^p-\langle A_p(u),v\rangle-\langle A_p(v),u\rangle +p\|v\|_{\mathrm{X}_0}^p
 \nonumber\\[0.35cm]
 &\geq p\left(\|u\|_{\mathrm{X}_0}^p-\|u\|_{\mathrm{X}_0}^{ p-1}\|v\|_{\mathrm{X}_0}-\|v\|_{\mathrm{X}_0}^{ p-1}\|v\|_{\mathrm{X}_0}+|u\|_{\mathrm{X}_0}^p\right)
 \nonumber\\[0.35cm]
 &=p\left(\|u\|_{\mathrm{X}_0}^{ p-1}-\|v\|_{\mathrm{X}_0}^{ p-1}\right)(\|u\|_{\mathrm{X}_0}-\|v\|_{\mathrm{X}_0}) \nonumber\\[0.35cm]
 &\geq 0.
\end{align}
Thus, if $\langle A_p(u)-A_p(v), u-v \rangle =0$, then necessarily $\|u\|_{\mathrm{X}_0}=\|v\|_{\mathrm{X}_0}$.

By \eqref{dl1} we have
\begin{equation}
\begin{aligned}\label{maior0}
    \langle A_p(u),u\rangle&=p\|u\|_{\mathrm{X}_0}^{ p}=p\|u\|_{\mathrm{X}_0}^{ p-1}\|v\|_{\mathrm{X}_0}\geq \langle A_p(u),v\rangle\quad\Longrightarrow \quad \langle A_p(u),u-v\rangle\geq 0,
    \\[0.35cm]
      \langle A_p(v),v\rangle&=p\|v\|_{\mathrm{X}_0}^{ p}=p\|v\|_{\mathrm{X}_0}^{ p-1}\|u\|_{\mathrm{X}_0} \geq \langle A_p(v),u\rangle\quad\Longrightarrow \quad \langle A_p(v),v-u\rangle\geq 0.
\end{aligned}
\end{equation}
If $\langle A_p(u)-A_p(v), u-v \rangle =0$, then
$$\langle A_p(u)-A_p(v),u-v \rangle=\langle A_p(u),u-v \rangle+\langle A_p(v),v-u \rangle=0, $$
and so thanks to \eqref{maior0} we get
$$\langle A_p(u),u-v\rangle= 0\quad\mbox{and}\quad\langle A_p(v),v-u\rangle= 0,$$
which yield in particular, since also $\|u\|_{\mathrm{X}_0}=\|v\|_{\mathrm{X}_0}$, that
\begin{equation}
\begin{aligned}\label{identl}
 \nonumber\langle A_p(u),v\rangle&=p\left[\int_{\Omega} |\nabla u|^{p-2}\nabla u\cdot\nabla v\,dx+\iint_{\mathbb{R}^{2N}}\mathcal{A}u(x,y)(v(x)-v(y))\,dx\,dy\right]
 \\
 &=p\left[\int_{\Omega} |\nabla u|^{p-1}|\nabla v|\,dx +\iint_{\mathbb{R}^{2N}}|\mathcal{A}u(x,y)||v(x)-v(y)|\,dx\,dy\right]\\
 &= p\|u\|_{\mathrm{X}_0}^{ p-1}\|v\|_{\mathrm{X}_0}.
\end{aligned}
\end{equation}
%\begin{equation}
%\begin{aligned}\label{identl}
 %\nonumber\langle A_p(u),v\rangle&=p\left(\int_{\Omega} |\nabla u|^{p-2}\nabla u\cdot\nabla v\,dx+\iint_{\mathbb{R}^{2N}}\mathcal{A}u(x,y)(v(x)-v(y))\,dx\,dy\right)
 %\\
 %&=p\left(\int_{\Omega} |\nabla u|^{p-1}|\nabla v|\,dx +\iint_{\mathbb{R}^{2N}}|\mathcal{A}u(x,y)||v(x)-v(y)|\,dx\,dy\right)= p\|u\|^{ p-1}\|v\|,
 %\\[0.35cm]
%\nonumber \langle A_p(v),u\rangle&=p\left(\int_{\Omega} |\nabla v|^{p-2}\nabla v\cdot\nabla u\,dx+\iint_{\mathbb{R}^{2N}}\mathcal{A}v(x,y)(u(x)-u(y))\,dx\,dy\right)
 %\\
 %&=p\left(\int_{\Omega} |\nabla v|^{p-1}|\nabla u|\,dx +\iint_{\mathbb{R}^{2N}}|\mathcal{A}v(x,y)||u(x)-u(y)|\,dx\,dy\right)= p\|v\|^{ p-1}\|u\|.
%\end{aligned}
%\end{equation}
From this, we get
\begin{equation}
\begin{aligned}\label{paralel}
0&\displaystyle \geq \int_{\Omega} \left( |\nabla u|^{p-2}\nabla u\cdot\nabla v-|\nabla u|^{p-1}|\nabla v| \right)dx
\\
& \displaystyle =\iint_{\mathbb{R}^{2N}}\left[\,|\mathcal{A}u(x,y)||v(x)-v(y)|- \mathcal{A}u(x,y)(v(x)-v(y))\, \right]\,dx\,dy\geq 0.
\end{aligned}
\end{equation}
Hence, we deduce that
$$\int_{\Omega} \left( |\nabla u|^{p-2}\nabla u\cdot\nabla v-|\nabla u|^{p-1}|\nabla v| \right)dx=0,$$
and so there exists $c(x)\geq 0$ such that $\nabla u(x)=c(x)\nabla v(x)$ for a.e. $x\in \Omega$. On the other hand, by equality \eqref{identl} used in \eqref{dl1}, we have $a_1-a_2=b_2-b_1$, where
\begin{align*}
   a_1& \displaystyle =\int_{\Omega}|\nabla u|^{p-1}|\nabla v| dx,\qquad\qquad\qquad\quad\,\,\,\, a_2=\|\nabla u\|_p^{p-1}\|\nabla v\|_p,
\\
b_1&\displaystyle =\iint_{\mathbb{R}^{2N}}\mathcal{A}u(x,y)(v(x)-v(y))\,dx\,dy,\qquad b_2=[u]_{s,p}^{p-1}[v]_{s,p}.
\end{align*}
By H\"older's inequality $a_1-a_2\leq 0$ and $b_2-b_1\geq 0$, so that $a_1=a_2$ which implies $|\nabla u(x)|=\alpha|\nabla v(x)|$ for a.e. $x\in \Omega$, with
a suitable $\alpha\geq0$. Thus, we get $u(x)=\alpha v(x)$ for a.e. $x\in \Omega$, but remembering $\|u\|_{\mathrm{X}_0}=\|v\|_{\mathrm{X}_0}$, if $\|v\|_{\mathrm{X}_0}\neq 0$ then $\alpha=1$ and $u=v$. If $\|v\|=0$ then $\|u\|_{\mathrm{X}_0}=0$ and so $u=v=0$.
\end{proof}

\begin{lemma}\label{Bp}
Let $p>1$. Then, we have:
\begin{itemize}
\item[$(B_1)$] $B_p$ is $(p-1)$-homogeneous and odd;
\item[$(B_2)$] $B_p$ is strictly positive, that is
$$\langle B_p(u),u\rangle>0\quad\mbox{for any } u\neq 0;$$
\item[$(B_3)$] $B_p$ is a compact potential operator.
\end{itemize}
\end{lemma}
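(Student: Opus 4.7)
All three properties can be read off directly from the explicit formula \eqref{defBp}. My plan is to dispatch the algebraic parts $(B_1)$ and $(B_2)$ first, then devote most of the argument to the two-part statement $(B_3)$, whose nontrivial content is the compactness claim.

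For $(B_1)$, I would compute $\langle B_p(tu),v\rangle$ for arbitrary $t\in\mathbb R$ and $v\in \mathrm X_0(\Omega)$: the pointwise identity $|tu|^{p-2}(tu)=|t|^{p-2}t\,|u|^{p-2}u$ inside the integral yields $B_p(tu)=|t|^{p-2}t\,B_p(u)$, which is both the $(p-1)$-homogeneity and the oddness (case $t=-1$). For $(B_2)$, choosing $v=u$ in \eqref{defBp} gives $\langle B_p(u),u\rangle=p\|u\|_p^p$, which is strictly positive for every $u\not\equiv 0$.

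For the potential part of $(B_3)$, I would propose the primitive
$$
\Psi\colon\mathrm X_0(\Omega)\to\mathbb R,\qquad \Psi(u)=\int_\Omega |u|^p\,dx,
$$
and verify via Lebesgue's dominated convergence together with the continuous embedding $\mathrm X_0(\Omega)\hookrightarrow L^p(\Omega)$ that $\Psi\in C^1(\mathrm X_0(\Omega);\mathbb R)$ with $\Psi'(u)=B_p(u)$. This part is routine.

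The compactness of $B_p$ is the only point that really requires work. I would show that whenever $u_n\rightharpoonup u$ in $\mathrm X_0(\Omega)$, one has $B_p(u_n)\to B_p(u)$ strongly in $(\mathrm X_0(\Omega))^{\ast}$. The key ingredients are the compact embedding $\mathrm X_0(\Omega)\hookrightarrow\hookrightarrow L^p(\Omega)$ and the continuity of the Nemytskii map $u\mapsto |u|^{p-2}u$ from $L^p(\Omega)$ into $L^{p'}(\Omega)$ with $p'=p/(p-1)$. Combining these with H\"older's inequality and the Sobolev embedding yields
$$
\bigl|\langle B_p(u_n)-B_p(u),v\rangle\bigr|\le p\,\bigl\||u_n|^{p-2}u_n-|u|^{p-2}u\bigr\|_{p'}\,\|v\|_p\le C\,\bigl\||u_n|^{p-2}u_n-|u|^{p-2}u\bigr\|_{p'}\,\|v\|_{\mathrm X_0}
$$
uniformly in $v\in \mathrm X_0(\Omega)$, so taking the supremum over $\|v\|_{\mathrm X_0}\le 1$ and passing to the limit concludes the proof. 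The only delicate point is the Nemytskii continuity when $1<p<2$, where $t\mapsto|t|^{p-2}t$ fails to be locally Lipschitz; this is handled by the classical pointwise inequality $\bigl||a|^{p-2}a-|b|^{p-2}b\bigr|\le C\,|a-b|^{p-1}$ combined with the strong $L^p$-convergence coming from the compact embedding.
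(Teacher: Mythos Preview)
Your proposal is correct and follows essentially the same route as the paper: both arguments dispose of $(B_1)$--$(B_2)$ directly from the definition, and for the compactness in $(B_3)$ both dualize via H\"older's inequality and the Sobolev embedding to reduce the problem to the convergence $|u_n|^{p-2}u_n\to|u|^{p-2}u$ in $L^{p'}(\Omega)$, which follows from the compact embedding $\mathrm X_0(\Omega)\hookrightarrow\hookrightarrow L^p(\Omega)$. The paper concludes this last step by dominated convergence along subsequences, whereas you invoke the Nemytskii continuity (with an explicit pointwise inequality for $1<p<2$); these are equivalent formulations of the same fact, and you also spell out the potential $\Psi(u)=\|u\|_p^p$, which the paper leaves implicit.
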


\begin{proof}
By definition  \eqref{defBp} we clearly have $B_p$ satisfies $(B_1)$-$(B_2)$.
In order to prove that $B_p$ is a compact operator, let  $\{u_n\}_n\subset \mathrm{X}_0(\Omega)$ and $u\in \mathrm{X}_0(\Omega)$ such that
$$u_n\rightharpoonup  u\quad\text{in } \mathrm{X}_0(\Omega).$$
We show that  $B_p(u_n)\to B_p(u)$ in $(\mathrm{X}_0(\Omega))^{\ast}$  as $n\to \infty$. Denoting by $\|\cdot\|_{(\mathrm{X}_0(\Omega))^{\ast}}$ the norm in the dual space $(\mathrm{X}_0(\Omega))^{\ast}$, then
$$
\begin{aligned}
 \|B_p(u_n)-B_p(u)\|_{(\mathrm{X}_0(\Omega))^{\ast}}&=\sup_{v\in \mathrm{X}_0(\Omega),\, \|v\|_{\mathrm{X}_0}\leq 1}\left|B_p(u_n)(v)-B_p(u)(v)\right|
     \\
   &=\sup_{v\in \mathrm{X}_0(\Omega),\, \|v\|_{\mathrm{X}_0}\leq 1}  p\left |\int_{\Omega}(|u_n|^{ p-2}u_n-|u|^{p-2}u)v \,dx \right|
\end{aligned}
$$
By H\"older's inequality and Sobolev embedding
$$
\begin{aligned}
     \|B_p(u_n)-B_p(u)\|_{(\mathrm{X}_0(\Omega))^{\ast}}&\leq  \sup_{v\in \mathrm{X}_0(\Omega),\, \|v\|_{\mathrm{X}_0}\leq 1} p \left(\int_{\Omega} \left||u_n|^{ p-2}u_n-|u|^{ p-2}u\right|^{\frac{ p}{ p-1}}\,dx\right)^{\frac{ p-1}{ p}}\|v\|_p\\
     &\leq C\left(\int_{\Omega}\left||u_n|^{ p-2}u_n-|u|^{p-2}u\right|^{\frac{ p}{ p-1}}\,dx\right)^{\frac{ p-1}{ p}},
\end{aligned}
$$
for a suitable $C>0$.
From this, by the Lebesgue Dominated Convergence Theorem, up to a subsequence, we conclude that $B_p(u_n)\to B_p(u)$ em $(\mathrm{X}_0(\Omega))^{\ast}$, as $n\to \infty$.

\end{proof}

Let us set the potentials of $A_p$ and $B_p$ as
$$\mathcal{I}_{p}(u)=\|u\|_{\mathrm{X}_0}^{ p}\quad\text{ and }\quad  \mathcal{J}_{p}(u)=\|u\|_{ p}^{p},$$
respectively. Let us set
$$\Psi(u)=\displaystyle\frac{1}{\mathcal{J}_{p}(u)},\qquad u\in \mathrm{X}_0(\Omega)\setminus \{0\},$$
and we consider its restriction $\widetilde{\Psi}$ as
\begin{equation}\label{psitil}
\widetilde{\Psi}=\Psi|_{\mathcal{N}},\quad\text{with }\mathcal{N}=\{u\in \mathrm{X}_0(\Omega):\,\,\mathcal{I}_{p}(u)=1 \}.
\end{equation}
It is well known that $\mathcal{N}$ is a complete manifold of class $C^1$, whenever $p\geq 2$.

Let $\mathcal{F}$ denote the class of symmetric subsets of $\mathcal{N}$ in $\mathrm{X}_0(\Omega)$. For any $k\in \mathbb{N}$ we set
\begin{equation}\label{autperera}
\lambda_k=\inf_{M\in\mathcal{F}_k}\sup_{u\in M} \widetilde{\Psi}(u)\quad\text{where}\quad\mathcal{F}_k=\{M\in\mathcal{F}:\,\, i(M)\geq k\}.
\end{equation}
Then, remembering the usual notation for sublevel sets and superlevel sets, as
$$
\widetilde{\Psi}^a=\left\{u\in\mathcal N:\,\,\widetilde{\Psi}(u)\leq a\right\},\qquad
\widetilde{\Psi}_a=\left\{u\in\mathcal N:\,\,\widetilde{\Psi}(u)\geq a\right\},\qquad a\in\mathbb R,
$$
we can prove the following properties for $\{\lambda_k\}_k$.

\begin{lemma}\label{ein2}
We have that $\{\lambda_k\}_k$ is a nondecreasing sequence of eigenvalues for \eqref{kir1}, satisfying the following properties:
\begin{itemize}
\item[$(i)$] if $\lambda_k=\ldots=\lambda_{k+m-1}=\lambda$, then $i(E_\lambda)\geq m$, where
$$E_\lambda=\{u\in\mathcal N:\,\,\widetilde{\Psi}^{\prime}(u)=0\,\mbox{ and }\,\widetilde{\Psi}(u)=\lambda\};
$$
\item[$(ii)$] the first eigenvalue is set as
$$\lambda_1=\min_{u\in \mathcal{N}}\widetilde{\Psi}(u)=\min_{u\neq 0}\displaystyle\frac{\mathcal{I}_{p}(u)}{\mathcal{J}_{p}(u)};
$$
\item[$(iii)$]  we have $i(\mathcal{N}\setminus \widetilde{\Psi}_{\lambda_k})<k\leq i(\widetilde{\Psi}^{\lambda_k})$.
If $\lambda_k<\lambda<\lambda_{k+1}$, then
$$i(\widetilde{\Psi}^{\lambda_k})=i(\mathcal{N}\,\setminus \widetilde{\Psi}_{\lambda_k})=i(\widetilde{\Psi}^{\lambda})=i(\mathcal{N}\,\setminus \widetilde{\Psi}_{\lambda_{k+1}})=k;$$
\item[$(iv)$] $\lambda_k\to \infty$, as $k\to \infty.$
\end{itemize}
\end{lemma}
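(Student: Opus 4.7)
The plan is to apply the abstract cohomological-index eigenvalue theory, in the form developed for the $p$-Laplacian in \cite[Chapter 3]{morse} and already adapted to mixed operators in \cite{PSY}, to the constrained $C^{1}$ functional $\widetilde{\Psi}$ on the manifold $\mathcal{N}$. The structural properties $(A_1)$--$(A_5)$ and $(B_1)$--$(B_3)$ collected in Lemmas \ref{Ap} and \ref{Bp} are precisely what is needed to run that machinery in the present mixed local-nonlocal setting.

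First, I would identify critical points of $\widetilde{\Psi}$ on $\mathcal{N}$ with eigenpairs of \eqref{kir1}. At a critical point $u\in\mathcal{N}$, the Lagrange multiplier rule yields $\mu\in\mathbb{R}$ such that $-\mathcal{J}_p(u)^{-2}B_p(u)=\mu A_p(u)$; testing with $u$ and using the $p$-homogeneity identities $\langle A_p(u),u\rangle=p\mathcal{I}_p(u)=p$ and $\langle B_p(u),u\rangle=p\mathcal{J}_p(u)$ coming from $(A_1)$ and $(B_1)$, one finds $\mu=-\widetilde{\Psi}(u)$, and substituting back gives $A_p(u)=\widetilde{\Psi}(u)B_p(u)$, which is exactly the weak formulation of \eqref{kir1} with $\lambda=\widetilde{\Psi}(u)$.

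Next, I would establish the $(PS)_c$ condition for $\widetilde{\Psi}$ on $\mathcal{N}$ for every $c>0$. Given a sequence $\{u_n\}\subset\mathcal{N}$ with $\widetilde{\Psi}(u_n)\to c$ and tangential derivative tending to zero, the Lagrange multipliers $\mu_n$ converge to $-c$, while $\{u_n\}$ is bounded in $\mathrm{X}_0(\Omega)$ because $\mathcal{I}_p(u_n)=1$. Passing to a weakly convergent subsequence $u_n\rightharpoonup u$, the compactness of $B_p$ from $(B_3)$ gives $B_p(u_n)\to B_p(u)$ strongly in $(\mathrm{X}_0(\Omega))^{\ast}$; pairing the Lagrange identity with $u_n-u$ and passing to the limit then yields $\langle A_p(u_n),u_n-u\rangle\to 0$, so the $(S)$-property $(A_4)$ produces the strong convergence $u_n\to u$ in $\mathrm{X}_0(\Omega)$. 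This is the only genuinely analytical step of the argument, and it is where all the nontrivial work of Lemmas \ref{Ap} and \ref{Bp} is consumed; I expect it to be the main obstacle.

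With the PS condition in hand, the four items are standard consequences of the cohomological-index minimax machinery. For $(ii)$, the class $\mathcal{F}_1$ contains every antipodal pair $\{u,-u\}\subset\mathcal{N}$ (which has index $1$ by $(iv)$ of Proposition \ref{pro1.0.21}), so $\lambda_1=\inf_{\mathcal{N}}\widetilde{\Psi}$, and the $p$-homogeneity of $\mathcal{I}_p$, $\mathcal{J}_p$ converts this into the Rayleigh quotient formula. For $(i)$, assuming by contradiction that $i(E_\lambda)<m$, one uses the compactness of $E_\lambda$ (from PS) together with the neighborhood stability of the cohomological index to pick a symmetric open $U\supset E_\lambda$ with $i(\overline{U})<m$, and then the equivariant deformation lemma associated to PS pushes $\widetilde{\Psi}^{\lambda+\varepsilon}\setminus U$ into $\widetilde{\Psi}^{\lambda-\varepsilon}$; combining this with an almost optimal $M\in\mathcal{F}_{k+m-1}$ produces an admissible set violating $\lambda_{k+m-1}=\lambda$. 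Item $(iii)$ is the standard piercing property: between two consecutive eigenvalues the index is constant on sublevel sets by deformation invariance, and the jump occurs exactly at $\lambda_k$, where $(i)$ records the multiplicity. Finally for $(iv)$, if $\lambda_k$ remained bounded, PS together with $(i)$ would force infinitely many linearly independent eigenfunctions to cluster at a single level with finite index, contradicting $(v)$ of Proposition \ref{pro1.0.21}.
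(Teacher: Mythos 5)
Your proposal is correct, but it does substantially more work than the paper's own proof, which is a one-line citation: having verified in Lemmas \ref{Ap} and \ref{Bp} that $A_p$ satisfies $(A_1)$--$(A_5)$ and $B_p$ satisfies $(B_1)$--$(B_3)$, the authors simply invoke the abstract eigenvalue theorem \cite[Theorem~4.6]{morse}, which packages all four conclusions. What you have written is, in effect, a sketch of the proof of that abstract theorem specialized to the present operators: the Lagrange-multiplier identification of critical points of $\widetilde{\Psi}$ on $\mathcal{N}$ with eigenpairs of \eqref{kir1}, the $(PS)$ verification via compactness of $B_p$ and the $(S)$-property of $A_p$, and the standard deformation/piercing arguments for items $(i)$--$(iv)$. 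All of these steps are sound -- in particular you correctly observe that boundedness of a $(PS)$ sequence is free because $\mathcal{I}_p\equiv 1$ on $\mathcal{N}$, and you correctly isolate $(A_4)$ and $(B_3)$ as the only genuinely analytical inputs. The trade-off is one of economy versus transparency: the paper's citation keeps the section short and defers the topological machinery to a standard reference, whereas your unpacking makes the logical dependence on the structural hypotheses explicit and would be the right move if one could not assume familiarity with \cite{morse}. One small point worth being careful about if you were to write this out in full: the smoothness of the constraint manifold $\mathcal{N}$ (the paper notes it is $C^1$ when $p\geq 2$) needs a comment for $1<p<2$, which the abstract framework in \cite{morse} handles but a naive Lagrange-multiplier argument does not automatically cover.
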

\begin{proof}
Thanks to Lemmas \ref{Ap} and \ref{Bp}, we can apply \cite[Theorem 4.6]{morse}.
\end{proof}

The study of eigenvalues of problem \eqref{kir1} is not new. We can refer to \cite{BDD}, \cite{BDVV23} and \cite{GU}. However, in Lemma \ref{ein2} we provide more information of $\{\lambda_k\}_k$, strictly related to the cohomological index theory, completing the picture of \cite{GU}.

Let us observe that $\lambda_1>\frac{\mathcal{S}}{|\Omega|^{p/N}}$, where $\mathcal{S}$ is set in \eqref{sob1}. Indeed, if  $u_1\in\mathcal{N}$ an eigenfunction associated with $\lambda_1$, then by H\"older's inequality
\begin{equation}\label{lan2}
 \lambda_1=\displaystyle\frac{\|\nabla u_1\|_p^{ p}+[u_1]_{s,p}^p}{\|u_1\|_{ p}^{ p}}\geq \displaystyle\frac{\mathcal{S}\|\nabla u_1\|_{p^{\ast}}^{ p/p^{\ast}}}{\|u_1\|_{ p}^{ p}}> \displaystyle\frac{\mathcal{S}}{|\Omega|^{1- p/p^{\ast}}}=\widetilde{\lambda},
\end{equation}
with $\widetilde{\lambda}$ as set in Theorem \ref{T2}. Moreover, we point out last inequality in \eqref{lan2} is strict since  $\mathcal{S}$ is not attained at $u_1$.

While, by part $(iii)$ of Lemma \ref{ein2} if $\lambda_{k+m}<\lambda_{k+m+1}$ then $i(\widetilde{\Psi}^{k+m})=k+m$. In order to apply Proposition \ref{pro1.0.23}, we need to construct a symmetric, compact subset $A_0$ of $\widetilde{\Psi}^{k+m}$ with the same index. For this, we first provide the following existence result.

\begin{lemma}\label{lem2.2.8}
For any $w\in L^p(\Omega)$, problem
\begin{equation}\label{kir2}
\left\{
\begin{array}{rcll}
(-\Delta)_p u+(-\Delta_p)^s u & = & |w|^{p-2}w  & \mbox{ in }\Omega,\\
u & = & 0 & \mbox{ on }  \mathbb{R}^{N} \setminus \Omega.
\end{array}
\right.
\end{equation}
admits a unique weak solution $u\in \mathrm{X}_0(\Omega)$. Furthermore, the operator $H:L^p(\Omega)\longrightarrow \mathrm{X}_0(\Omega)$ such that $w\mapsto H(w)=u$, with $u$ the solution of \eqref{kir2}, is continuous.
\end{lemma}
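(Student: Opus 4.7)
The plan is to treat \eqref{kir2} variationally, viewing a weak solution as a critical point of
$$\Phi(u)=\frac{1}{p}\|u\|_{\mathrm{X}_0}^{p}-\int_\Omega |w|^{p-2}w\,u\,dx,\qquad u\in\mathrm{X}_0(\Omega).$$
The linear term is bounded on $\mathrm{X}_0(\Omega)$ by H\"older's inequality and the Sobolev embedding (since $|w|^{p-2}w\in L^{p/(p-1)}(\Omega)$), hence continuous, hence weakly continuous. The functional $u\mapsto\frac{1}{p}\|u\|_{\mathrm{X}_0}^{p}$ is strictly convex thanks to the strict monotonicity of $A_p$ from property $(A_5)$ of Lemma \ref{Ap}, and it is continuous, so it is weakly lower semicontinuous. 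In particular $\Phi$ is strictly convex and weakly lower semicontinuous. Coercivity follows from
$$\Phi(u)\geq\frac{1}{p}\|u\|_{\mathrm{X}_0}^{p}-C\,\|w\|_p^{p-1}\|u\|_{\mathrm{X}_0},$$
with $C=C(\Omega,N,p)>0$, since $p>1$. By the direct method of the calculus of variations, $\Phi$ admits a minimizer, and by strict convexity this minimizer is unique. Since $\Phi^\prime(u)v=\frac{1}{p}\langle A_p(u),v\rangle-\int_\Omega |w|^{p-2}wv\,dx$ by definition \eqref{defAp}, the critical point equation is precisely the weak formulation of \eqref{kir2}, so existence and uniqueness follow.

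For the continuity of $H$, I would proceed as follows. Fix $w\in L^p(\Omega)$ and take $w_n\to w$ in $L^p(\Omega)$. Set $u_n=H(w_n)$ and $u=H(w)$. Testing the equation for $u_n$ with $u_n$ itself and using H\"older together with the Sobolev embedding gives
$$\|u_n\|_{\mathrm{X}_0}^{p}=\int_\Omega |w_n|^{p-2}w_n u_n\,dx\leq C\|w_n\|_p^{p-1}\|u_n\|_{\mathrm{X}_0},$$
so $\{u_n\}_n$ is bounded in $\mathrm{X}_0(\Omega)$. Up to a subsequence, $u_n\rightharpoonup u^\ast$ in $\mathrm{X}_0(\Omega)$ and $u_n\to u^\ast$ in $L^p(\Omega)$ by the compact Sobolev embedding. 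Since $w\mapsto |w|^{p-2}w$ is continuous from $L^p(\Omega)$ into $L^{p/(p-1)}(\Omega)$, a H\"older estimate gives
$$\frac{1}{p}\langle A_p(u_n),u_n-u^\ast\rangle=\int_\Omega |w_n|^{p-2}w_n(u_n-u^\ast)\,dx\longrightarrow 0$$
as $n\to\infty$. By the $(S)$-property $(A_4)$ of $A_p$ from Lemma \ref{Ap}, this forces $u_n\to u^\ast$ strongly in $\mathrm{X}_0(\Omega)$. Passing to the limit in the weak formulation (using the strong convergence to handle the nonlinear local and nonlocal terms, together with the already established convergence of the right-hand side) shows that $u^\ast$ solves \eqref{kir2} with datum $w$, hence $u^\ast=u$ by uniqueness. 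A standard subsequence argument then yields convergence of the entire original sequence, i.e.\ $H(w_n)\to H(w)$ in $\mathrm{X}_0(\Omega)$.

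The main technical step is the identification of the limit via the $(S)$-property: everything hinges on producing the relation $\langle A_p(u_n),u_n-u^\ast\rangle\to 0$ without assuming strong convergence a priori. This is where the combination of the compact Sobolev embedding $\mathrm{X}_0(\Omega)\hookrightarrow L^p(\Omega)$ and the continuity of $w\mapsto|w|^{p-2}w$ from $L^p$ into its conjugate dual is essential. Once this is in place, property $(A_4)$ does the rest and the argument closes cleanly.
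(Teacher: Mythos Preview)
Your proof is correct and follows the same route the paper sketches: existence by the direct method of the calculus of variations and uniqueness via the strict monotonicity $(A_5)$ of Lemma~\ref{Ap}. The paper's own proof is a two-line remark that does not address the continuity of $H$ at all; your argument via boundedness, the $(S)$-property $(A_4)$, and the subsequence/uniqueness identification fills that gap correctly.
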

\begin{proof}
The existence of solution for \eqref{kir2} follows by a classical minimization argument. The uniqueness can be proved by property $(A_5)$ in Lemma \ref{Ap}.
\end{proof}

\begin{lemma}\label{lem2.2.9}
If $\lambda_k<\lambda_{k+1}$, then $\widetilde{\Psi}^{\lambda_k}$ admits a symmetric, compact subset $A_0$ with $i(A_0)=k$.
\end{lemma}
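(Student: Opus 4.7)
The plan is to realize $A_0$ as the image $T(S)$ of a compact symmetric subset $S$ of $L^p(\Omega) \setminus \{0\}$ with $i(S) \geq k$, under a continuous odd map $T$ from $L^p(\Omega) \setminus \{0\}$ into $\mathcal{N}$ built from the operator $H$ of Lemma \ref{lem2.2.8}. First I would record that the upper bound $i(A_0) \leq k$ is automatic: applying part $(iii)$ of Lemma \ref{ein2} to any $\lambda \in (\lambda_k, \lambda_{k+1})$ (which exists by hypothesis), together with monotonicity of $i$ on the inclusion $\widetilde{\Psi}^{\lambda_k} \subset \widetilde{\Psi}^{\lambda}$, gives $i(\widetilde{\Psi}^{\lambda_k}) = k$, so the real task is to produce a compact symmetric subset of $\widetilde{\Psi}^{\lambda_k}$ with index at least $k$.

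To set up the map, I would observe that the uniqueness assertion of Lemma \ref{lem2.2.8}, combined with the oddness of $w \mapsto |w|^{p-2}w$, forces $H(-w) = -H(w)$, and that $H(w) = 0$ only when $w = 0$. Thus
$$T(w) := \frac{H(w)}{\mathcal{I}_p(H(w))^{1/p}}, \qquad w \in L^p(\Omega) \setminus \{0\},$$
is well defined, continuous, odd, and takes values in $\mathcal{N}$. For any compact symmetric $S \subset L^p(\Omega) \setminus \{0\}$ with $i(S) \geq k$, the image $T(S)$ is compact and symmetric in $\mathcal{N}$, and Proposition \ref{pro1.0.21}$(i)$ yields $i(T(S)) \geq k$. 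The remaining task is to select $S$ so that $T(S) \subset \widetilde{\Psi}^{\lambda_k}$.

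For this selection, I would start from the minimax definition \eqref{autperera} of $\lambda_k$, picking a sequence $\{M_n\} \subset \mathcal{F}_k$ with $\sup_{M_n} \widetilde{\Psi} \searrow \lambda_k$. Exploiting the $(PS)$ condition for $\widetilde{\Psi}$ on $\mathcal{N}$ --- which follows from the $(S)$-property $(A_4)$ in Lemma \ref{Ap} and the compactness property $(B_3)$ in Lemma \ref{Bp} --- together with the absence of critical values of $\widetilde{\Psi}$ in $(\lambda_k, \lambda_{k+1})$, the standard negative pseudo-gradient flow on $\mathcal{N}$ provides odd continuous deformations of the $M_n$'s into $\widetilde{\Psi}^{\lambda_k + 1/n}$ without decreasing the index. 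Approximating each deformed set by a compact $A_n = T(S_n)$, where $S_n$ is a $(k-1)$-sphere in a suitably chosen $k$-dimensional subspace of $L^p(\Omega)$, and passing to the limit via Hausdorff convergence together with the stability of $i$ under small symmetric neighborhoods (Proposition \ref{pro1.0.21}$(v)$), one obtains the desired $A_0 \subset \widetilde{\Psi}^{\lambda_k}$ with $i(A_0) \geq k$, hence $i(A_0) = k$ by the upper bound above.

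The hard part will be this last limiting step: the approximating compact sets $A_n$ may accumulate near the critical set $E_{\lambda_k}$, and the passage to the limit must preserve both compactness and the index lower bound. This is exactly where the compactness of $E_{\lambda_k}$ (guaranteed by the $(PS)$ condition) and the continuity of $H$ become decisive, since $H$ upgrades weak $L^p$-convergence into strong $\mathrm{X}_0$-convergence and ensures that the limit set $A_0$ still lies in the closed sublevel set $\widetilde{\Psi}^{\lambda_k}$.
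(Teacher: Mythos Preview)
Your proposal has the right ingredients---the operator $H$ and the odd projection onto $\mathcal N$---but the route you take to produce $A_0$ has a genuine gap, and you miss a direct argument that avoids limits entirely.

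The gap is in your final limiting step. You propose to approximate the deformed minimax sets by images $T(S_n)$ of finite-dimensional spheres and then pass to a Hausdorff limit. But you give no mechanism for why such an approximation exists with index at least $k$, nor why the cohomological index would be lower semicontinuous under Hausdorff convergence---in general it is not. The stability property in Proposition~\ref{pro1.0.21}$(v)$ goes the wrong way for your purposes: it says a small \emph{neighborhood} of a compact set has the same index, not that a Hausdorff limit of sets with index $\geq k$ retains index $\geq k$. So the ``hard part'' you flag is not merely hard; as written it does not close.

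The paper's argument bypasses all of this by exploiting a monotonicity you did not use. Set $A=\pi_p(\widetilde\Psi^{\lambda_k})=\{w:\|w\|_p=1,\ \|w\|_{\mathrm X_0}^p\le\lambda_k\}$, which has index $k$ by Lemma~\ref{ein2}$(iii)$. The key computation is that for $v\in A$ and $u=H(v)$, testing the equation $A_p(u)=B_p(v)$ against $u$ and against $v$ gives
\[
\|u\|_{\mathrm X_0}^p\le\|u\|_p,\qquad 1\le\|u\|_{\mathrm X_0}^{p-1}\|v\|_{\mathrm X_0},
\]
so $\|\pi_p(u)\|_{\mathrm X_0}\le\|v\|_{\mathrm X_0}$. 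Thus $\overline H:=\pi_p\circ H$ maps $A$ into itself. Since $A$ is bounded in $\mathrm X_0(\Omega)$, it is precompact in $L^p(\Omega)$, and continuity of $H$ makes $\overline A:=\overline H(A)$ compact in $\mathrm X_0(\Omega)$. Because $\overline H:A\to\overline A\subset A$ is odd and continuous, $i(\overline A)=i(A)=k$. Finally project $\overline A$ radially onto $\mathcal N$ to get $A_0\subset\widetilde\Psi^{\lambda_k}$. No deformations, no limits, no approximations---the single inequality $\|\pi_p(H(v))\|_{\mathrm X_0}\le\|v\|_{\mathrm X_0}$ does all the work.
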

\begin{proof}
Let
$$\pi_{ p}(u)=\displaystyle\frac{u}{\|u\|_{ p}},\quad u\in \mathrm{X}_0(\Omega)\setminus\{0\}$$
be the radial projection into $\mathcal{N}_{ p}=\{u\in \mathrm{X}_0(\Omega) :\,\, \|u\|_p=1\}$, and set
$$A=\pi_{ p}(\widetilde{\Psi}^{\lambda_k})=\{w\in \mathcal{N}_{ p}:\,\, \|w\|_{\mathrm{X}_0}^p\leq \lambda_k\}.$$
Then, by part $(i)$ of Proposition \ref{pro1.0.21} and part $(iii)$ of Lemma \ref{ein2}, we get
$$i(A)=i(\widetilde{\Psi}^{\lambda_k})=k.$$
Let $u=H(v)$ for $v\in A$, with $H$ the operator defined in Lemma \ref{lem2.2.8}. We have
\begin{align*}
\|u\|_{\mathrm{X}_0}^{ p}&=\int_{\Omega} |v|^{ p-2}uv\,dx\leq \|v\|_p^{p-1 } \|u\|_p=\|u\|_p,
\\
1&=\|v\|_p^p=\int_{\Omega} |\nabla u|^{p-2}\nabla u\cdot\nabla v\,dx +\iint_{\mathbb{R}^{2N}}\mathcal{A}u(x,y)[v(x)-v(y)]\,dx\,dy\\
 & \leq \|u\|_{\mathrm{X}_0}^{ p-1}\|v\|_{\mathrm{X}_0}.
\end{align*}
which yields
$$\displaystyle\frac{\|u\|_{\mathrm{X}_0}^{ p}}{\|u\|_{ p}}\leq 1 \leq \|u\|_{\mathrm{X}_0}^{ p-1}\|v\|_{\mathrm{X}_0}
\quad\Longrightarrow\quad \|\pi_{ p}(u)\|_{\mathrm{X}_0}=\displaystyle\frac{\|u\|_{\mathrm{X}_0}}{\|u\|_{ p}}\leq \|v\|_{\mathrm{X}_0},$$
which means $\pi_{ p}(H(A))\subseteq A$. Set $\overline{H}=\pi_{ p}\circ H$ and let $\overline{A}=\overline{H}(A)$.
By compactness of embedding $\mathrm{X}_0(\Omega)\hookrightarrow L^p(\Omega)$ and the continuity of operator  $H$ in Lemma \ref{lem2.2.8}, we get  $\overline{A}$ is compact in $\mathrm{X}_0(\Omega)$. On the other hand, since $\overline{A}\subset A$ and $\overline{H}$ is an odd continuous function from $A$ to $\overline{A}$, by part $(i)$ of Proposition \ref{pro1.0.21} we have  $i(A)=i(\overline{A})=k$.
Finally, let
$$\pi(u)=\displaystyle\frac{u}{\|u\|_{\mathrm{X}_0}}, \quad u\in  \mathrm{X}_0(\Omega)\setminus\{0\}$$
be the radial projection into $\mathcal{N}$. Set $A_0=\pi(\overline{A})$, then $A_0\subseteq \widetilde{\Psi}^{\lambda_k}$ is compact in $\mathrm{X}_0(\Omega)$ and satisfies $i(\overline{A})=i(A_0)=k$.
\end{proof}

Finally, we can address the proof of Theorem \ref{T2}.

\begin{proof}[Proof of Theorem \ref{T2}]
We only give the proof of part $(ii)$, since the proof of $(i)$ is similar and simpler.

Let $k$, $m\in \mathbb{N}$ be fixed. Let
$$\lambda_k\leq \lambda<\lambda_{k+1}=\ldots=\lambda_{k+m}<\lambda_{k+m+1}\quad\mbox{and}\quad \lambda>\lambda_{k+1}-\widetilde{\lambda},$$
with $\widetilde{\lambda}=\frac{\mathcal{S}}{|\Omega|^{p/N}}$.
By Lemma \ref{lemmaPS} functional $J_\lambda$ satisfies the $(PS)_c$ condition for
$$c<\frac{1}{N}\mathcal{S}^{N/p}.$$
Hence, the idea is to apply Proposition \ref{pro1.0.23} with $b=\frac{1}{N}\mathcal{S}^{N/p}$.

By Lemma \ref{lem2.2.9}, $\widetilde{\Psi}^{\lambda_{k+m}}$ admits a symmetric, compact subset $A_0$ with $i(A_0)=k+m$. We denote $B_0=\widetilde{\Psi}_{\lambda_{k+1}}$ so that by part $(iii)$ of Lemma \ref{ein2} we have $i(\mathcal{N}\,\setminus B_0)=k$.
Let $0<r<R$ and let $A$, $B$ and $X$ be as in Proposition \ref{pro1.0.23}. For any $u\in B_0$, by \eqref{sob1}
\begin{equation}
\begin{aligned}\label{inff1}
J_\lambda(ru)&=\frac{r^{ p}}{ p}\|u\|_{\mathrm{X}_0}^{ p}-\frac{\lambda r^{ p}}{ p}\|u\|_p^p -\frac{r^{p^{\ast}}}{p^{\ast}}\|u\|_{p^{\ast}}^{p^{\ast}}
\geq \frac{r^{ p}}{ p}\|u\|_{\mathrm{X}_0}^{ p}-\frac{\lambda r^{ p}}{ p}\,\frac{\|u\|_{\mathrm{X}_0}^{ p}}{\lambda_{k+1}} -\frac{r^{p^{\ast}}}{p^{\ast}}\|u\|_{\mathrm{X}_0}^{p^{\ast}} \mathcal{S}^{-p^{\ast}/p}
\\[0.35cm]
&= \frac{r^{ p}}{ p}\|u\|_{\mathrm{X}_0}^{ p}\left(1-\frac{\lambda}{\lambda_{k+1}}\right)-\frac{r^{p^{\ast}}}{p^{\ast}}\|u\|_{\mathrm{X}_0}^{p^{\ast}} \mathcal{S}^{-p^{\ast}/p}.
\end{aligned}
\end{equation}
Since $\lambda<\lambda_{k+1}$, it follows from  \eqref{inff1} that $\displaystyle \inf_{u\in B} J_\lambda(u)>0$ for $r$ sufficiently small. For $u\in A_0\subset\widetilde{\Psi}^{\lambda_{k+m}}$, by H\"older's inequality
\begin{equation}
\begin{aligned}\label{supp1}
 J_\lambda(Ru)&=\frac{R^{ p}}{ p}\|u\|_{\mathrm{X}_0}^{ p}-\frac{\lambda R^{ p}}{ p}\|u\|_p^p -\frac{R^{p^{\ast}}}{p^{\ast}}\|u\|_{p^{\ast}}^{p^{\ast}}
\leq \frac{R^{ p}}{ p}\|u\|_{\mathrm{X}_0}^{ p}-\frac{\lambda R^{ p}}{ p}\,\frac{\|u\|_{\mathrm{X}_0}^{ p}}{\lambda_{k+m}}-\frac{R^{p^{\ast}}}{p^{\ast}}\left(\displaystyle
\frac{\|u\|_p^p}{|\Omega|^{1-\frac{ p}{p^{{\ast}}}}}\right)^{p^{\ast}/ p}
 \\[0.4cm]
 &\leq \frac{R^{ p}}{ p}\|u\|_{\mathrm{X}_0}^{ p}-\frac{\lambda R^{ p}}{ p}\,\frac{\|u\|_{\mathrm{X}_0}^{ p}}{\lambda_{k+m}}-\frac{R^{p^{\ast}}}{p^{\ast}}\left(\displaystyle\frac{ \|u\|_{\mathrm{X}_0}^{p}}{\lambda_{k+m}\,|\Omega|^{1-\frac{p}{p^{{\ast}}}}}\right)^{p^{\ast}/ p}.
\end{aligned}
\end{equation}
Since $\lambda<\lambda_{k+m}$, by \eqref{supp1} we take $R>r$ sufficiently large so that
$J_\lambda(u)\leq 0$, for any $u\in A$. For $u\in X$, by the H\"older's inequality
\begin{equation}
\begin{aligned}\label{neu1}
 J_\lambda(u)&=\frac{1}{ p}\|u\|_{\mathrm{X}_0}^{ p}-\frac{\lambda }{ p}\|u\|_p^p -\frac{1}{p^{\ast}}\|u\|_{p^{\ast}}^{p^{\ast}}
\leq \frac{\lambda_{k+m}-\lambda}{ p}\|u\|_p^p -\frac{1}{p^{\ast}}\left(\displaystyle\frac{\|u\|_p^p}{|\Omega|^{1-\frac{ p}{p^{\ast}}}}\right)^{p^{\ast}/ p}
\\[0.4cm]
 &=\frac{\lambda_{k+m}-\lambda}{p}\|u\|_p^p -\frac{1}{p^{\ast}}\left(\frac{\|u\|_p^{ p}}{|\Omega|^{1-\frac{ p}{p^{\ast}}}}\right)^{p^{\ast}/ p}.
\end{aligned}
\end{equation}
Let us set $g_\lambda:\mathbb{R}_0^{+}\longrightarrow \mathbb{R}$ given by
$$ g_\lambda(t)= \left(\displaystyle\frac{\lambda_{k+m}-\lambda}{ p}\right)t -\frac{1}{p^{\ast}}\left(\displaystyle\frac{t}{|\Omega|^{1-\frac{ p}{p^{\ast}}}}\right)^{p^{\ast}/ p}.$$
Since $\lambda_{k+m}>\lambda$, function $g_\lambda$ attains its maximum at
 $t_0=(\lambda_{k+m}-\lambda)^{{ p}/{(p^{\ast}- p)}}\,|\Omega|$, with
$$g_\lambda(t_0)=\left(\frac{1}{ p}-\frac{1}{p^{\ast}}\right)(\lambda_{k+m}-\lambda)^{\frac{p^{\ast}}{p^{\ast}- p}}|\Omega|.$$
Thus,
$$
\displaystyle  \sup_{u\in X} J_\lambda(u)\leq \left(\frac{1}{ p}-\frac{1}{p^{\ast}}\right)(\lambda_{k+m}-\lambda)^{\frac{p^{\ast}}{p^{\ast}- p}}|\Omega|\leq \left(\frac{1}{ p}-\frac{1}{p^{\ast}}\right)\,\mathcal{S}^{\frac{ p^{\ast}}{p^{\ast}- p}}=\frac{1}{N}\mathcal{S}^{N/p}
$$
since $\lambda_{k+m}-\widetilde{\lambda}<\lambda$. By Proposition \ref{pro1.0.23} we get $m$ distinct pairs of nontrivial critical points  $\pm u_{j,\lambda}$, with $j=1,\ldots,m$, for $J_\lambda$, that is
$$0<J_\lambda(u_{j,\lambda})\leq \left(\frac{1}{ p}-\frac{1}{p^{\ast}}\right)(\lambda_{k+m}-\lambda)^{\frac{p^{\ast}}{p^{\ast}- p}}|\Omega|\to 0,\quad\mbox{as }\quad \lambda\to\lambda_{k+m},$$
which implies that
$$\left(\frac{1}{ p}-\frac{1}{p^{\ast}}\right) \|u_{j,\lambda}\|_{p^{\ast}}^{p^{\ast}}=J_\lambda(u_{j,\lambda})
-\frac{1}{ p}\langle J^{\prime}_\lambda(u_{j,\lambda}),u_{j,\lambda}\rangle=J_\lambda(u_{j,\lambda})\to 0,$$
as $\lambda\to \lambda_{k+m}$. Hence, the H\"older's inequality implies that $u_{j,\lambda}\to 0$ in $L^p(\Omega)$, as $\lambda\to \lambda_{k+m}$, and so it follows
$$\|u_{j,\lambda}\|_{\mathrm{X}_0}^{ p}=\left( p I_{\lambda}(u_{j,\lambda})+\lambda \|u_{j,\lambda}\|_{ p}^{ p}+\frac{ p}{p^{\ast}}\|u_{j,\lambda}\|_{p^{\ast}}^{p^{\ast}}\right)\to 0,$$
as $\lambda\to\lambda_{k+m}$, concluding the proof.

\end{proof}

%%%%%%%%%%%%%%%%%%%%%%%%%%%%%%%%%%%%%%%%%%%%%%%%%%%%%%%%%%%%%%%%%%%%%%%%%%%%%%%%%%%%%%%%%%%%%%%%%%%%%%%%%%%%%%%%%%%%%%%%%%%

\section{Superlinear case: a mountain pass solution}\label{sec5}

In this section, we prove Theorem \ref{supex}. For this, we assume along the section that $q\in(p,p^{\ast})$ without further mentioning. We apply the classical mountain pass theorem. We first need a suitable control from above for functional $J_{\lambda}$.

The idea is to employ a suitable truncation of the function
\begin{equation}\label{ueps}
\mathrm{U}_\varepsilon(x)=\frac{K_{N,p}\,\varepsilon^{(N-p)/p(p-1)}}{(\varepsilon^{p/(p-1)}+|x|^{p/(p-1)})^{(N-p)/p}},\quad\mbox{with }\varepsilon>0,
\end{equation}
which is in $W^{1,p}(\mathbb R^N)$, where the best constant Sobolev inclusion is attained, considering normalization constant $K_{N,p}>0$ given by
$$
K_{N,p}=\left[N\left(\frac{N-p}{p-1}\right)^{p-1}\right]^{(N-p)/p^2}.
$$
Let us fix $r>0$ such that $\overline{B_{4r}(0)}\subset\Omega$ and let us introduce a radial cut-off function $\phi_r\in C^\infty(\mathbb R^N,[0,1])$ such that
\begin{equation}\label{phi}
\phi_r(x)=\begin{cases}
1 & \mbox{ if }x\in B_{r}(0),\\
0 & \mbox{ if }x\in B_{2r}^c(0).
\end{cases}
\end{equation}
For any $\varepsilon>0$, we set
\begin{equation}\label{veps}
u_\varepsilon=\phi_r \,\mathrm{U}_\varepsilon\quad\mbox{and}\quad
v_\varepsilon=\frac{u_\varepsilon}{\|u_\varepsilon\|_{p^{\ast}}},
\end{equation}
which of course are radial and belong to $W_0^{1,p}(\Omega)$.
Arguing as in the proof of \cite[Theorem 8.4]{GA1}, see also \cite{DH,GA2,GV} and above all \cite{AD,SY}, we have the following estimates.

\begin{lemma}\label{lemmanorma}
Let $v_\varepsilon$ be as in \eqref{veps}. Then,
$$
\|\nabla v_\varepsilon\|_p^p=\mathcal{S}+\mathrm{O}(\varepsilon^{(N-p)/(p-1)})
$$
as $\varepsilon\to0^+$.
\end{lemma}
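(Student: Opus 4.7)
The plan is to estimate the numerator and denominator of the ratio
\[\|\nabla v_\varepsilon\|_p^p = \frac{\|\nabla u_\varepsilon\|_p^p}{\|u_\varepsilon\|_{p^{\ast}}^p}\]
separately, exploiting that $\mathrm{U}_\varepsilon$ is the Aubin-Talenti extremal for the best Sobolev constant $\mathcal{S}$. By the scaling $\mathrm{U}_\varepsilon(x) = \varepsilon^{-(N-p)/p}\mathrm{U}_1(x/\varepsilon)$, one has $\int_{\mathbb R^N}|\nabla \mathrm{U}_\varepsilon|^p\,dx = \int_{\mathbb R^N}\mathrm{U}_\varepsilon^{p^{\ast}}\,dx = \mathcal{S}^{N/p}$ for every $\varepsilon > 0$, so the whole proof reduces to quantifying the perturbation introduced by the cutoff $\phi_r$ from \eqref{phi}.

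For the numerator, I would write
\[\|\nabla u_\varepsilon\|_p^p = \mathcal{S}^{N/p} - \int_{\mathbb R^N\setminus B_r(0)}|\nabla \mathrm{U}_\varepsilon|^p\,dx + \int_{B_{2r}(0)\setminus B_r(0)}|\nabla(\phi_r \mathrm{U}_\varepsilon)|^p\,dx,\]
using that $\phi_r\equiv 1$ on $B_r(0)$ and expanding the gradient by the Leibniz rule on the annulus. For $|x|\ge r$ and $\varepsilon$ small, the pointwise bounds $\mathrm{U}_\varepsilon(x) \leq C\,\varepsilon^{(N-p)/(p(p-1))}|x|^{-(N-p)/(p-1)}$ and $|\nabla \mathrm{U}_\varepsilon(x)| \leq C\,\varepsilon^{(N-p)/(p(p-1))}|x|^{-(N-1)/(p-1)}$ factor out a prefactor $\varepsilon^{(N-p)/(p-1)}$ after raising to the $p$-th power. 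Spherical integration of the tails produces finite radial integrals, whose convergence uses exactly the hypothesis $N>p$, so that both remainders are $\mathrm{O}(\varepsilon^{(N-p)/(p-1)})$.

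The same splitting applied to $u_\varepsilon^{p^{\ast}}$, with the pointwise bound now raised to the $p^{\ast}$-th power, produces the prefactor $\varepsilon^{(N-p)p^{\ast}/(p(p-1))} = \varepsilon^{N/(p-1)}$, whence $\|u_\varepsilon\|_{p^{\ast}}^{p^{\ast}} = \mathcal{S}^{N/p} + \mathrm{O}(\varepsilon^{N/(p-1)})$. Raising to the power $p/p^{\ast} = (N-p)/N$ and dividing gives
\[\|\nabla v_\varepsilon\|_p^p = \frac{\mathcal{S}^{N/p}+\mathrm{O}(\varepsilon^{(N-p)/(p-1)})}{\mathcal{S}^{(N-p)/p}\bigl(1+\mathrm{O}(\varepsilon^{N/(p-1)})\bigr)} = \mathcal{S} + \mathrm{O}(\varepsilon^{(N-p)/(p-1)}),\]
since $N/(p-1)>(N-p)/(p-1)$, so the denominator error is absorbed by the numerator one.

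The only real obstacle is the careful bookkeeping of the $\varepsilon$-exponents through the change of variable $x=\varepsilon y$ and the decay of the resulting integrals in $y$ over $B_{r/\varepsilon}^c(0)$, where the assumption $N>p$ is what ensures integrability of the gradient tail. No other subtlety arises: only the order of the error is required, so the precise value of the leading constants does not enter.
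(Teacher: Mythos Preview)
Your argument is correct and follows precisely the classical route that the paper defers to by citation (the paper itself gives no proof, pointing instead to \cite{GA1,GA2,DH,GV,AD,SY}): separate estimates for $\|\nabla u_\varepsilon\|_p^p$ and $\|u_\varepsilon\|_{p^{\ast}}^{p^{\ast}}$ via the scaling of $\mathrm{U}_\varepsilon$ and pointwise decay on $\{|x|\geq r\}$, then recombination. The denominator estimate $\|u_\varepsilon\|_{p^{\ast}}^{p^{\ast}}=\mathcal{S}^{N/p}+\mathrm{O}(\varepsilon^{N/(p-1)})$ is in fact quoted verbatim later in the paper (proof of Lemma~\ref{lemmastima}, citing \cite[Lemma~7.1]{GA1}), confirming your computation.
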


\begin{lemma}\label{lemmanormaq}
Let $v_\varepsilon$ be as in \eqref{veps}. Then, there exists $C>0$ such that
$$
\|v_\varepsilon\|_q^q\geq C\varepsilon^{N-q(N-p)/p}
$$
for any $q>p^{\ast}(1-1/p)$.
\end{lemma}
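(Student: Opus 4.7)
The plan is to drop the normalization and work directly with $u_\varepsilon=\phi_r \mathrm{U}_\varepsilon$. Since $\|u_\varepsilon\|_{p^{\ast}}$ is bounded from above by a constant independent of $\varepsilon$ (this is the standard Talenti-type computation, as in the references already cited for Lemma~\ref{lemmanorma}), it is enough to prove the corresponding lower bound
$$
\|u_\varepsilon\|_q^q \;\geq\; C\,\varepsilon^{N-q(N-p)/p}.
$$
By the cut-off property \eqref{phi} we have $\phi_r\equiv 1$ on $B_r(0)$, so the first step is simply
$$
\|u_\varepsilon\|_q^q \;\geq\; \int_{B_r(0)}\mathrm{U}_\varepsilon^q\,dx
\;=\;K_{N,p}^q\,\varepsilon^{q(N-p)/p(p-1)}\int_{B_r(0)}\frac{dx}{\bigl(\varepsilon^{p/(p-1)}+|x|^{p/(p-1)}\bigr)^{q(N-p)/p}}.
$$

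The key move is the dilation $x=\varepsilon\,y$, which factorizes the singular denominator because $|x|^{p/(p-1)}=\varepsilon^{p/(p-1)}|y|^{p/(p-1)}$. A quick bookkeeping of the powers of $\varepsilon$ gives
$$
\int_{B_r(0)}\mathrm{U}_\varepsilon^q\,dx
\;=\;K_{N,p}^q\,\varepsilon^{N-q(N-p)/p}\int_{B_{r/\varepsilon}(0)}\frac{dy}{\bigl(1+|y|^{p/(p-1)}\bigr)^{q(N-p)/p}},
$$
so the correct power of $\varepsilon$ drops out automatically. It remains to bound the rescaled integral from below by a positive constant uniformly in small $\varepsilon$.

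The main (and really only) technical point is this last step. The integrand on $\mathbb R^N$ decays like $|y|^{-q(N-p)/(p-1)}$ at infinity, so integrability over all of $\mathbb R^N$ is equivalent to
$$
\frac{q(N-p)}{p-1}>N\quad\Longleftrightarrow\quad q>\frac{N(p-1)}{N-p}=p^{\ast}\Bigl(1-\frac{1}{p}\Bigr),
$$
which is exactly the hypothesis on $q$. Hence the integral over $\mathbb R^N$ is finite, and since the domains $B_{r/\varepsilon}(0)$ exhaust $\mathbb R^N$ as $\varepsilon\to0^+$, monotone convergence (or simply lower-bounding by $B_1(0)$) yields a positive constant $c>0$ such that
$$
\int_{B_{r/\varepsilon}(0)}\frac{dy}{\bigl(1+|y|^{p/(p-1)}\bigr)^{q(N-p)/p}}\;\geq\;c
$$
for all $\varepsilon$ small enough. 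Combining this with the bound on $\|u_\varepsilon\|_{p^{\ast}}$ gives the claim with an appropriate $C>0$. I do not anticipate any genuine obstacle; the only care needed is to scale by $\varepsilon$ (and not by $\varepsilon^{1/(p-1)}$), so that the factor $\varepsilon^{p/(p-1)}$ in the denominator matches cleanly.
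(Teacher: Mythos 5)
Your proof is correct and is exactly the standard computation that the paper invokes by reference (to Garc\'ia Azorero--Peral, Alves--Ding, Swanson--Yu) rather than reproducing; the paper itself gives no written proof of this lemma. Your bookkeeping of the $\varepsilon$-powers under the dilation $x=\varepsilon y$, the observation that $\phi_r\equiv 1$ on $B_r(0)$ reduces the estimate to the explicit Talenti bubble, the use of the uniform upper bound on $\|u_\varepsilon\|_{p^\ast}$ (which the paper records via \cite[Lemma~7.1]{GA1} in the form $\|u_\varepsilon\|_{p^{\ast}}^{p^{\ast}}=\mathcal{S}^{N/p}+\mathrm{O}(\varepsilon^{N/(p-1)})$), and the identification of $q(N-p)/(p-1)>N$ with $q>p^{\ast}(1-1/p)$ are all accurate. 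One small remark: for the stated one-sided inequality the integrability condition is not strictly needed (your fallback of lower-bounding the rescaled integral over $B_1(0)$ works for all $q\geq 1$); the threshold $q>p^{\ast}(1-1/p)$ is where this power of $\varepsilon$ becomes the sharp two-sided asymptotic, which is the content actually used downstream.
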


\begin{remark}\label{osservazione}
We strongly point out that we can apply Lemma \ref{lemmanormaq} when $m_{N,p,s}>\beta_{N,q,p}$. Indeed, by \eqref{parametri} we get
$$
m_{N,p,s}>\beta_{N,q,p}\quad\Longrightarrow\quad q>p^{\ast}-\frac{p}{N-p}m_{N,p,s}\geq p^{\ast}-\frac{p}{p-1}.
$$
Hence, considering also that $p<q<p^*$, in this case we have
$$q>\max\left\{p,p^{\ast}-\frac{p}{p-1}\right\}\geq p^{\ast}\left(1-\frac{1}{p}\right)$$
with the last inequality well discussed in \cite[Remark 3.4]{GA2}.
\end{remark}

Concerning the Gagliardo seminorm, inspired by \cite[Proposition 21]{SV} we claim the following estimate.

\begin{lemma}\label{lemmagaglia}
Let $u_\varepsilon$ be as in \eqref{veps}. Then,
$$
\displaystyle\iint_{\mathbb R^{2N}}\frac{|u_\varepsilon(x)-u_\varepsilon(y)|^p}{|x-y|^{N+ps}}\,dx\,dy= \mathrm{O}(\varepsilon^{m_{N,p,s}})
$$
as $\varepsilon\to0^+$, with $m_{N,p,s}$ given in \eqref{parametri}.
\end{lemma}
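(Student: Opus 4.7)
My plan is to estimate the Gagliardo double integral by splitting the integration domain and combining two complementary bounds: a near-diagonal bound relying on the gradient of $u_\varepsilon$ (which produces the exponent $p(1-s)$) and a far-diagonal bound relying on the $L^p$ tail of $U_\varepsilon$ outside a neighborhood of the origin (which produces the exponent $(N-p)/(p-1)$). The minimum of these two exponents, exactly $m_{N,p,s}$, will then emerge naturally.

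Concretely, for any $\delta>0$ I would write
\[
\iint_{\mathbb R^{2N}}\frac{|u_\varepsilon(x)-u_\varepsilon(y)|^p}{|x-y|^{N+ps}}\,dx\,dy = I_{\le\delta}+I_{>\delta},
\]
according to $|x-y|\le\delta$ and $|x-y|>\delta$. On the near-diagonal piece, Jensen's inequality $|u_\varepsilon(x)-u_\varepsilon(y)|^p\le |x-y|^p\int_0^1|\nabla u_\varepsilon(x+t(y-x))|^p\,dt$ followed by Fubini gives $I_{\le\delta}\le C\,\|\nabla u_\varepsilon\|_p^{\,p}\,\delta^{p(1-s)}$, and since by Lemma \ref{lemmanorma} (and the bounded derivative of $\phi_r$) one has $\|\nabla u_\varepsilon\|_p^p = O(1)$, this piece is of order $\delta^{p(1-s)}$. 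On the far-diagonal piece, the trivial bound $|a-b|^p\le 2^{p-1}(|a|^p+|b|^p)$ together with $\int_{|z|>\delta}|z|^{-N-ps}\,dz\sim\delta^{-ps}$ yields $I_{>\delta}\le C\,\|u_\varepsilon\|_p^{\,p}\,\delta^{-ps}$.

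The sizes of these bounds are then controlled via a direct computation with the explicit form of $U_\varepsilon$. A substitution $x=\varepsilon z$ combined with the tail decay $U_1(z)\sim |z|^{-(N-p)/(p-1)}$ reduces $\|u_\varepsilon\|_p^p$ to $C\varepsilon^p\int_{|z|\le 2r/\varepsilon}U_1(z)^p\,dz$, and tracking the three regimes according to whether $N>p^2$, $N=p^2$, or $N<p^2$ gives $\|u_\varepsilon\|_p^{\,p}\le C\,\varepsilon^{\min\{p,(N-p)/(p-1)\}}$ (up to a logarithm in the borderline case). Balancing the two pieces by choosing $\delta$ optimally, and in the regime where $(N-p)/(p-1)<p(1-s)$ simply keeping the direct tail bound, will deliver the desired conclusion $O(\varepsilon^{m_{N,p,s}})$.

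The main obstacle I anticipate is the sub-case $N<p^2$: a naive interpolation between the near- and far-diagonal bounds produces only $\varepsilon^{(1-s)(N-p)/(p-1)}$, losing a factor of $1-s$ compared to the sharp exponent $(N-p)/(p-1)$. Recovering the sharp bound forces a finer region-by-region analysis, in the spirit of \cite[Proposition 21]{SV}, in which one splits the integration domain into sub-regions according to the size of $\min(|x|,|y|)$ relative to $\varepsilon$ and to the annulus $\{r<|\cdot|<2r\}$ where $\phi_r$ transitions, and uses pointwise estimates comparing $|U_\varepsilon(x)-U_\varepsilon(y)|$ with both $|x-y|$ and with $\max(|x|,|y|)^{-(N-p)/(p-1)-1}\varepsilon^{(N-p)/(p(p-1))}$, so that the cancellation coming from the specific radial form of the Aubin--Talenti bubble is fully exploited.
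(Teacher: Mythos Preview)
Your near/far-diagonal decomposition is correct as far as it goes, and you correctly identify that it only delivers the sharp exponent when $N\ge p^2$: in that regime $\|u_\varepsilon\|_p^p=O(\varepsilon^p)$, balancing gives $\delta\sim\varepsilon$, and both pieces are $O(\varepsilon^{p(1-s)})=O(\varepsilon^{m_{N,p,s}})$. But for $N<p^2$ your method yields only $O(\varepsilon^{(1-s)(N-p)/(p-1)})$, strictly worse than $m_{N,p,s}$ regardless of which term realizes the minimum. Your remark ``simply keeping the direct tail bound'' in the regime $(N-p)/(p-1)<p(1-s)$ does not help: with $\delta$ fixed the far-diagonal is indeed $O(\varepsilon^{(N-p)/(p-1)})$, but the near-diagonal is then only $O(1)$ since $\|\nabla u_\varepsilon\|_p^p$ does not tend to zero. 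The final paragraph acknowledges the gap but proposes a fix (splitting by the size of $\min(|x|,|y|)$ relative to $\varepsilon$, exploiting radial cancellation of the bubble) that is both vague and more elaborate than necessary.

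The paper's proof avoids this obstacle entirely by a different, simpler decomposition. Instead of splitting by $|x-y|$, one splits $\mathbb R^{2N}$ according to whether $x$ and $y$ lie in $B_r(0)$, where $\phi_r\equiv1$ and hence $u_\varepsilon=U_\varepsilon$. On the inner block $B_r(0)\times B_r(0)$ (and for the $U_\varepsilon$-part of the mixed blocks) one bounds by the \emph{full} seminorm $[U_\varepsilon]_{s,p}^p$ and uses the exact scaling identity
\[
[U_\varepsilon]_{s,p}^p=\varepsilon^{p(1-s)}[U_1]_{s,p}^p=O(\varepsilon^{p(1-s)}),
\]
obtained from the change of variables $x=\varepsilon\xi$, $y=\varepsilon\zeta$. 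On the outer and mixed blocks (at least one point in $B_r^c(0)$), the pointwise tail bounds $|u_\varepsilon(x)|,\ |\nabla u_\varepsilon(x)|\le C\varepsilon^{(N-p)/(p(p-1))}$ for $|x|\ge r/2$ give $|u_\varepsilon(x)-u_\varepsilon(y)|\le C\varepsilon^{(N-p)/(p(p-1))}\min\{1,|x-y|\}$, and integrating over the compact support yields $O(\varepsilon^{(N-p)/(p-1)})$. The two contributions combine to $O(\varepsilon^{m_{N,p,s}})$ with no case distinction on $N$ versus $p^2$ and no optimization in $\delta$. The key insight you are missing is that the $p(1-s)$ exponent comes directly from the scaling of the Gagliardo seminorm of the \emph{untruncated} extremal $U_\varepsilon$, not from a near-diagonal gradient bound on $u_\varepsilon$.
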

\begin{proof}
Along the proof, we denote with $C>0$ possibly different constants, but independent by $\varepsilon>0$.

Let $\varrho>0$, by \eqref{ueps} and \eqref{veps}, for $x\in B_\varrho^c(0)$ we have
\begin{equation}\label{varrho1}
|u_\varepsilon(x)|\leq\frac{K_{N,p}\,\varepsilon^{(N-p)/p(p-1)}}{(\varepsilon^{p/(p-1)}+\varrho^{p/(p-1)})^{(N-p)/p}}\leq C\,\varepsilon^{(N-p)/p(p-1)},
\end{equation}
while
\begin{equation}\label{varrho2}
\begin{aligned}
|\nabla u_\varepsilon(x)|=&\left|\nabla\phi_r(x)\frac{K_{N,p}\,\varepsilon^{(N-p)/p(p-1)}}{(\varepsilon^{p/(p-1)}+|x|^{p/(p-1)})^{(N-p)/p}}\right.\\
&\left.+\phi_r(x)\frac{N-p}{p-1}|x|^{(2-p)/(p-1)}x\,\frac{K_{N,p}\,\varepsilon^{(N-p)/p(p-1)}}{(\varepsilon^{p/(p-1)}+|x|^{p/(p-1)})^{N/p}}\right|\\
\leq& C\varepsilon^{(N-p)/p(p-1)}\left[\frac{1}{(\varepsilon^{p/(p-1)}+|x|^{p/(p-1)})^{(N-p)/p}}
+\frac{|x|^{p/(p-1)}}{\varrho}\frac{1}{(\varepsilon^{p/(p-1)}+|x|^{p/(p-1)})^{N/p}}\right]\\
\leq& C\varepsilon^{(N-p)/p(p-1)}
\end{aligned}
\end{equation}
since $|x|\geq\varrho$.

If $x\in\mathbb R^N$ and $y\in B_r^c(0)$ with $|x-y|\leq r/2$, by a first order Taylor expansion it follows that
$$
|u_\varepsilon(x)-u_\varepsilon(y)|\leq|\nabla u(\xi)||x-y|
$$
with $\xi$ belonging to the segment joining $x$ and $y$, namely $\xi=tx+(1-t)y$ for some $t\in[0,1]$.
From this
$$
|\xi|=|y+t(x-y)|\geq|y|-t|x-y|\geq r-tr/2\geq r/2,
$$
so that by \eqref{varrho2} with $\varrho=r/2$, we get
\begin{equation}\label{claim10a}
|u_\varepsilon(x)-u_\varepsilon(y)|\leq C\varepsilon^{(N-p)/p(p-1)}|x-y|\qquad\mbox{ if }\,x\in\mathbb R^N,\, y\in B_r^c(0)\,\mbox{ with }\,|x-y|\leq r/2.
\end{equation}
While, by combining \eqref{claim10a} and \eqref{varrho1} with $\varrho=r$, we have
\begin{equation}\label{claim10b}
|u_\varepsilon(x)-u_\varepsilon(y)|\leq C\varepsilon^{(N-p)/p(p-1)}\min\left\{1,|x-y|\right\}\qquad\mbox{ if }\,x, y\in B_r^c(0).
\end{equation}
Now, let us set
$$
\begin{aligned}
\mathbb D:=\left\{(x,y)\in\mathbb R^{2N}:\,\,x\in B_r(0),\,\,y\in B_r^c(0)\,\mbox{ and }\,|x-y|>r/2\right\},\\
\mathbb E:=\left\{(x,y)\in\mathbb R^{2N}:\,\,x\in B_r(0),\,\,y\in B_r^c(0)\,\mbox{ and }\,|x-y|\leq r/2\right\},
\end{aligned}
$$
in this way, we can split
\begin{equation}\label{4.23}
\begin{aligned}
\displaystyle\iint_{\mathbb R^{2N}}\frac{|u_\varepsilon(x)-u_\varepsilon(y)|^p}{|x-y|^{N+ps}}\,dx\,dy=&
\displaystyle\iint_{B_r(0)\times B_r(0)}\frac{|\mathrm{U}_\varepsilon(x)-\mathrm{U}_\varepsilon(y)|^p}{|x-y|^{N+ps}}\,dx\,dy\\
&+\displaystyle2\iint_{\mathbb D}\frac{|u_\varepsilon(x)-u_\varepsilon(y)|^p}{|x-y|^{N+ps}}\,dx\,dy\\
&+\displaystyle2\iint_{\mathbb E}\frac{|u_\varepsilon(x)-u_\varepsilon(y)|^p}{|x-y|^{N+ps}}\,dx\,dy\\
&+\displaystyle\iint_{B_r^c(0)\times B_r^c(0)}\frac{|u_\varepsilon(x)-u_\varepsilon(y)|^p}{|x-y|^{N+ps}}\,dx\,dy.
\end{aligned}
\end{equation}
By \eqref{claim10b} we have
\begin{equation}\label{4.24}
\begin{aligned}
\displaystyle\iint_{B_r^c(0)\times B_r^c(0)}\frac{|u_\varepsilon(x)-u_\varepsilon(y)|^p}{|x-y|^{N+ps}}\,dx\,dy
&\leq C \varepsilon^{(N-p)/(p-1)}\displaystyle\iint_{B_{2r}(0)\times \mathbb R^N}\frac{\min\left\{1,|x-y|^p\right\}}{|x-y|^{N+ps}}\,dx\,dy\\
&=  \mathrm{O}(\varepsilon^{(N-p)/(p-1)})
\end{aligned}
\end{equation}
while by \eqref{claim10a} we get
\begin{equation}\label{4.25}
\begin{aligned}
\displaystyle\iint_{\mathbb E}\frac{|u_\varepsilon(x)-u_\varepsilon(y)|^p}{|x-y|^{N+ps}}\,dx\,dy
&\leq C \varepsilon^{(N-p)/(p-1)}\displaystyle\iint_{\substack{x\in B_{r}(0),\,y\in B_{r}^c(0)\\|x-y|\leq r/2}}\frac{|x-y|^p}{|x-y|^{N+ps}}\,dx\,dy\\
&\leq C \varepsilon^{(N-p)/(p-1)}\displaystyle\int_{B_{r}(0)}dx\int_{B_{r/2}(0)}\frac{1}{|\xi|^{N+ps-p}}d\xi
= \mathrm{O}(\varepsilon^{(N-p)/(p-1)}).
\end{aligned}
\end{equation}
Since by \eqref{phi} we have $u_\varepsilon(x)=\mathrm{U}_\varepsilon(x)$ for any $x\in B_r(0)$, then for any $(x,y)\in\mathbb D$ by convexity
$$
|u_\varepsilon(x)-u_\varepsilon(y)|^p=|\mathrm{U}_\varepsilon(x)-u_\varepsilon(y)|^p\leq 2^{p-1}\left(|\mathrm{U}_\varepsilon(x)-\mathrm{U}_\varepsilon(y)|^p+|\mathrm{U}_\varepsilon(y)-u_\varepsilon(y)|^p\right),
$$
from which
\begin{equation}\label{4.27}
\begin{aligned}
\iint_{\mathbb D}\frac{|u_\varepsilon(x)-u_\varepsilon(y)|^p}{|x-y|^{N+ps}}\,dx\,dy\leq
2^{p-1}\left(\displaystyle\iint_{\mathbb D}\frac{|\mathrm{U}_\varepsilon(x)-\mathrm{U}_\varepsilon(y)|^p}{|x-y|^{N+ps}}\,dx\,dy
+\displaystyle\iint_{\mathbb D}\frac{|\mathrm{U}_\varepsilon(y)-u_\varepsilon(y)|^p}{|x-y|^{N+ps}}\,dx\,dy\right).
\end{aligned}
\end{equation}
By \eqref{varrho1} with $\delta=r$
\begin{equation}\label{4.28}
\begin{aligned}
\displaystyle\iint_{\mathbb D}\frac{|\mathrm{U}_\varepsilon(y)-u_\varepsilon(y)|^p}{|x-y|^{N+ps}}\,dx\,dy
&\leq 2^{p-1}\displaystyle\iint_{\mathbb D}\frac{|\mathrm{U}_\varepsilon(y)|^p+|u_\varepsilon(y)|^p}{|x-y|^{N+ps}}\,dx\,dy\\
&\leq 2^{p}\displaystyle\iint_{\mathbb D}\frac{|\mathrm{U}_\varepsilon(y)|^p}{|x-y|^{N+ps}}\,dx\,dy\\
&\leq C\varepsilon^{(N-p)/(p-1)}\displaystyle\iint_{\substack{x\in B_{r}(0),\,y\in B_{r}^c(0)\\|x-y|> r/2}}\frac{1}{|x-y|^{N+ps}}\,dx\,dy\\
&\leq C\varepsilon^{(N-p)/(p-1)}\displaystyle\int_{x\in B_{r}(0)}dx\int_{B_{r/2}^c(0)}\frac{1}{|\xi|^{N+ps}}d\xi=  \mathrm{O}(\varepsilon^{(N-p)/(p-1)}).
\end{aligned}
\end{equation}
Finally, by a double change of variables $x=\varepsilon\xi$ and $y=\varepsilon\zeta$, we get
\begin{equation}\label{extra}
\begin{aligned}
\displaystyle\iint_{\mathbb R^{2N}}\frac{|\mathrm{U}_\varepsilon(x)-\mathrm{U}_\varepsilon(y)|^p}{|x-y|^{N+ps}}\,dx\,dy
=\varepsilon^{p-ps}\displaystyle\iint_{\mathbb R^{2N}}\frac{|U_1(\xi)-U_1(\zeta)|^p}{|\xi-\zeta|^{N+ps}}\,d\xi\,d\zeta= \mathrm{O}(\varepsilon^{p-ps}).
\end{aligned}
\end{equation}
Thus, since trivially
$$
\displaystyle\iint_{V}\frac{|\mathrm{U}_\varepsilon(x)-\mathrm{U}_\varepsilon(y)|^p}{|x-y|^{N+ps}}\,dx\,dy\leq
\displaystyle\iint_{\mathbb R^{2N}}\frac{|\mathrm{U}_\varepsilon(x)-\mathrm{U}_\varepsilon(y)|^p}{|x-y|^{N+ps}}\,dx\,dy
$$
for $V=B_r(0)\times B_r(0)$ and $V=\mathbb D$, then combining \eqref{4.23}-\eqref{extra} we can conclude the proof.
\end{proof}

Combining Lemmas \ref{lemmanorma}-\ref{lemmagaglia} and considering Remark \ref{osservazione}, we get the following control for $J_{\lambda}$.

\begin{lemma}\label{lemmastima}
Let $v_\varepsilon$ be as in \eqref{veps} and let $m_{N,p,s}$, $\beta_{N,q,p}$ be as in \eqref{parametri}. Then, we have the following cases:
\begin{itemize}
\item[$(i)$] if $m_{N,p,s}>\beta_{N,q,p}$, then there exists $\varepsilon> 0$ sufficiently small such that
$$
\sup_{t\geq0} J_{\lambda}(tv_\varepsilon)<\frac{1}{N}\mathcal{S}^{N/p}\quad\mbox{ for any }\lambda>0;
$$
\item[$(ii)$] if $m_{N,p,s}\leq \beta_{N,q,p}$, then for any $\varepsilon> 0$ there exists $\lambda^{\ast}=\lambda^{\ast}(\varepsilon)>0$ such that
$$
\sup_{t\geq0} J_{\lambda}(tv_\varepsilon)<\frac{1}{N}\mathcal{S}^{N/p}\quad\mbox{ for any }\lambda\geq\lambda^{\ast}.
$$
\end{itemize}
\end{lemma}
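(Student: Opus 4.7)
The starting point is the explicit form
\[
g_\varepsilon(t):=J_\lambda(tv_\varepsilon)=\frac{t^p}{p}\|v_\varepsilon\|_{\mathrm{X}_0}^p-\frac{\lambda t^q}{q}\|v_\varepsilon\|_q^q-\frac{t^{p^\ast}}{p^\ast},\qquad t\ge 0,
\]
where we used $\|v_\varepsilon\|_{p^\ast}=1$ by \eqref{veps}. Since $q>p$ and $p^\ast>p$, the function $g_\varepsilon$ is continuous, $g_\varepsilon(0)=0$ and $g_\varepsilon(t)\to-\infty$ as $t\to\infty$, so its supremum is attained at some $t_\varepsilon>0$ with $g_\varepsilon'(t_\varepsilon)=0$. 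The plan is to prove that $t_\varepsilon$ is uniformly bounded away from $0$ and $\infty$ for small $\varepsilon$, and then combine the elementary maximization without the $\lambda$-term with the asymptotic estimates of Lemmas \ref{lemmanorma}, \ref{lemmanormaq} and \ref{lemmagaglia}.

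First I would derive the two-sided control on $t_\varepsilon$. From $g_\varepsilon'(t_\varepsilon)=0$ we get
\[
\|v_\varepsilon\|_{\mathrm{X}_0}^p=\lambda t_\varepsilon^{q-p}\|v_\varepsilon\|_q^q+t_\varepsilon^{p^\ast-p}.
\]
Since $\|v_\varepsilon\|_{\mathrm{X}_0}^p=\|\nabla v_\varepsilon\|_p^p+[v_\varepsilon]_{s,p}^p=\mathcal S+\mathrm O(\varepsilon^{m_{N,p,s}})$ by Lemmas \ref{lemmanorma}–\ref{lemmagaglia} (after dividing the Gagliardo estimate by $\|u_\varepsilon\|_{p^\ast}^p$, which stays bounded away from $0$ and $\infty$), the left-hand side converges to $\mathcal S>0$. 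Also $\|v_\varepsilon\|_q^q\le|\Omega|^{1-q/p^\ast}$ by H\"older. Hence $t_\varepsilon\to 0$ would force the right-hand side to $0$, a contradiction, while the inequality $t_\varepsilon^{p^\ast-p}\le\|v_\varepsilon\|_{\mathrm{X}_0}^p$ bounds $t_\varepsilon$ from above. Thus there exist $0<c_1<c_2$, independent of small $\varepsilon$, with $t_\varepsilon\in[c_1,c_2]$.

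Next I would use the elementary identity $\sup_{t\ge 0}\bigl(\tfrac{t^p}{p}A-\tfrac{t^{p^\ast}}{p^\ast}\bigr)=\tfrac{1}{N}A^{N/p}$ for $A>0$, applied with $A=\|v_\varepsilon\|_{\mathrm{X}_0}^p$, together with the lower bound $t_\varepsilon\ge c_1$ and Lemma \ref{lemmanormaq}, to obtain
\[
\sup_{t\ge 0}J_\lambda(tv_\varepsilon)=g_\varepsilon(t_\varepsilon)\le\frac{1}{N}\|v_\varepsilon\|_{\mathrm{X}_0}^N-\frac{\lambda c_1^q}{q}\|v_\varepsilon\|_q^q\le\frac{1}{N}\mathcal S^{N/p}+C_1\,\varepsilon^{m_{N,p,s}}-C_2\lambda\,\varepsilon^{\beta_{N,q,p}},
\]
for some constants $C_1,C_2>0$ independent of $\varepsilon$ and $\lambda$. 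Expanding $\|v_\varepsilon\|_{\mathrm{X}_0}^N=(\mathcal S+\mathrm O(\varepsilon^{m_{N,p,s}}))^{N/p}$ uses only that $N/p\ge 1$ and a first-order Taylor expansion.

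Finally I would split into the two cases. In case $(i)$, $m_{N,p,s}>\beta_{N,q,p}$ makes the negative $\lambda$-contribution dominate the $\mathrm O(\varepsilon^{m_{N,p,s}})$ error for every $\lambda>0$ once $\varepsilon$ is small enough, yielding strict inequality. In case $(ii)$, where $m_{N,p,s}\le\beta_{N,q,p}$, the $\lambda$-independent error is comparable or larger than $\varepsilon^{\beta_{N,q,p}}$, so for each \emph{fixed} small $\varepsilon>0$ I would simply take $\lambda^\ast:=\lambda^\ast(\varepsilon)$ large enough to absorb $C_1\varepsilon^{m_{N,p,s}}$ into $C_2\lambda\,\varepsilon^{\beta_{N,q,p}}$, producing again a negative correction to $\tfrac{1}{N}\mathcal S^{N/p}$. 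The delicate step is the uniform lower bound on $t_\varepsilon$, because without it the $\lambda$-term might vanish in the limit and leave us with only the asymptotic equality $\tfrac{1}{N}\mathcal S^{N/p}+\mathrm O(\varepsilon^{m_{N,p,s}})$, which is not strictly below the threshold in case $(i)$; this is where the interplay between the mixed norm $\|v_\varepsilon\|_{\mathrm{X}_0}^p\to\mathcal S$ and the Nehari-type identity $g_\varepsilon'(t_\varepsilon)=0$ is essential.
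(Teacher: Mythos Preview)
Your treatment of case $(i)$ is essentially the paper's: bound the first and last terms of $g_\varepsilon(t_\varepsilon)$ by the explicit maximum $\tfrac{1}{N}A^{N/p}$ with $A=\|v_\varepsilon\|_{\mathrm X_0}^p$, keep the subcritical term via a lower bound on the maximizer, expand using Lemmas~\ref{lemmanorma}--\ref{lemmagaglia}, and compare the powers $\varepsilon^{m_{N,p,s}}$ and $\varepsilon^{\beta_{N,q,p}}$. One correction: the constants you call $C_1,C_2$ are not \emph{both} independent of $\lambda$. Your argument ``$t_\varepsilon\to 0$ would force the right-hand side to $0$'' tacitly holds $\lambda$ fixed while $\varepsilon\to 0$; the resulting lower bound is $c_1=c_1(\lambda)$, hence $C_2=C_2(\lambda)$. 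This is harmless in $(i)$, where $\lambda$ is fixed and $\varepsilon$ is chosen afterward.

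In case $(ii)$ this dependence is fatal for your plan. You fix $\varepsilon$ and want to let $\lambda\to\infty$ in
\[
\sup_{t\ge 0}J_\lambda(tv_\varepsilon)\le \frac{1}{N}\|v_\varepsilon\|_{\mathrm X_0}^N-\frac{\lambda\,t_\varepsilon^{\,q}}{q}\,\|v_\varepsilon\|_q^q,
\]
hoping the last term eventually dominates. But the very identity $g_\varepsilon'(t_\varepsilon)=0$, namely $\|v_\varepsilon\|_{\mathrm X_0}^p=\lambda t_\varepsilon^{\,q-p}\|v_\varepsilon\|_q^q+t_\varepsilon^{\,p^\ast-p}$, shows that for $\varepsilon$ fixed one has $t_\varepsilon\to 0$ as $\lambda\to\infty$ and that $\lambda t_\varepsilon^{\,q-p}$ stays bounded. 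Consequently
\[
\lambda\,t_\varepsilon^{\,q}=\bigl(\lambda\,t_\varepsilon^{\,q-p}\bigr)\,t_\varepsilon^{\,p}\longrightarrow 0\qquad(\lambda\to\infty),
\]
so your negative correction vanishes and the right-hand side tends to $\tfrac{1}{N}\|v_\varepsilon\|_{\mathrm X_0}^N\ge \tfrac{1}{N}\mathcal S^{N/p}$, which proves nothing. There is no $\lambda$-independent $c_1>0$, and ``take $\lambda^\ast$ large enough to absorb $C_1\varepsilon^{m_{N,p,s}}$ into $C_2\lambda\,\varepsilon^{\beta_{N,q,p}}$'' collapses. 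The paper sidesteps this entirely: it uses $t_\varepsilon\to 0$ \emph{directly} to conclude $\sup_{t\ge 0}J_\lambda(tv_\varepsilon)=J_\lambda(t_\varepsilon v_\varepsilon)\to 0<\tfrac{1}{N}\mathcal S^{N/p}$, with no appeal to Lemma~\ref{lemmanormaq} at all; indeed Remark~\ref{osservazione} warns that the lower bound $\|v_\varepsilon\|_q^q\ge C\varepsilon^{\beta_{N,q,p}}$ is unavailable when $m_{N,p,s}\le\beta_{N,q,p}$.
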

\begin{proof}
Let $\varepsilon>0$ and $\lambda>0$, we have
\begin{equation}\label{tveps}
J_\lambda(tv_\varepsilon)=\frac{t^p}{p}\left(\|\nabla v_\varepsilon\|_p^p+[v_\varepsilon]_{s,p}^p\right)-\lambda\frac{t^q}{q}\|v_\varepsilon\|_q^q-\frac{t^{p^{\ast}}}{p^{\ast}},
\end{equation}
from which we distinguish the two cases.

\vspace{0.3cm}
$\bullet$\, {\em Case (i).}
Since by \cite[Lemma 7.1]{GA1} we know that
$$
\|u_\varepsilon\|_{p^{\ast}}^{p^{\ast}}=\mathcal{S}^{N/p}+ \mathrm{O}(\varepsilon^{N/(p-1)}),
$$
by Lemmas \ref{lemmanorma}-\ref{lemmagaglia} it follows that
\begin{equation}\label{stima}
J_\lambda(tv_\varepsilon)\leq\frac{t^p}{p}\left(\mathcal{S}+  \mathrm{O}(\varepsilon^{m_{N,p,s}})\right)-C\lambda\frac{t^q}{q}\varepsilon^{\beta_{N,q,p}}-\frac{t^{p^{\ast}}}{p^{\ast}}.
\end{equation}
Let us set
$$
f_{\varepsilon,\lambda}(t):=\frac{t^p}{p}\left(\mathcal{S}+C\varepsilon^{m_{N,p,s}}\right)-C\lambda\frac{t^q}{q}\varepsilon^{\beta_{N,q,p}}-\frac{t^{p^{\ast}}}{p^{\ast}},
$$
such that $f_{\varepsilon,\lambda}(0)=0$ and $f_{\varepsilon,\lambda}(t)\to-\infty$ as $t\to\infty$. Hence, there exists $t_{\varepsilon,\lambda}\geq0$ such that
$$
\sup_{t\geq0}f_{\varepsilon,\lambda}(t)=f_{\varepsilon,\lambda}(t_{\varepsilon,\lambda}).
$$
If $t_{\varepsilon,\lambda}=0$, then we can conclude easily the proof of the lemma.
While, for $t_{\varepsilon,\lambda}>0$, we get
\begin{equation}\label{gprimo}
0=f^{\prime}_{\varepsilon,\lambda}(t_{\varepsilon,\lambda})=t_{\varepsilon,\lambda}^{p-1}\left(\mathcal{S}+C\varepsilon^{m_{N,p,s}}\right)
-C\lambda t_{\varepsilon,\lambda}^{q-1}\varepsilon^{\beta_{N,q,p}}-t_{\varepsilon,\lambda}^{p^{\ast}-1},
\end{equation}
from which
$$
t_{\varepsilon,\lambda}<\left(\mathcal{S}+C\varepsilon^{m_{N,p,s}}\right)^{1/(p^{\ast}-p)}.
$$
On the other hand, for $\varepsilon>0$ sufficiently small, by \eqref{gprimo} we get $t_{\varepsilon,\lambda}\geq\mu_\lambda>0$ so that, using also the fact that the map
$$
t\longmapsto \frac{t^p}{p}\left(\mathcal{S}+C\varepsilon^{m_{N,p,s}}\right)-\frac{t^{p^{\ast}}}{p^{\ast}}
$$
is increasing in the closed interval $\left[0,\left(\mathcal{S}+C\varepsilon^{m_{N,p,s}}\right)^{1/(p^{\ast}-p)}\right]$ containing $t_{\varepsilon,\lambda}$, we obtain
$$
\begin{aligned}
\sup_{t\geq0} f_{\varepsilon,\lambda}(t)&=f_{\varepsilon,\lambda}(t_{\varepsilon,\lambda})\\
&<\frac{\left(\mathcal{S}+C\varepsilon^{m_{N,p,s}}\right)^{1+p/(p^{\ast}-p)}}{p}-C\varepsilon^{\beta_{N,q,p}}-\frac{\left(\mathcal{S}+C\varepsilon^{m_{N,p,s}}\right)^{p^{\ast}/(p^{\ast}-p)}}{p^{\ast}}\\
&=\frac{1}{N}\left(\mathcal{S}+C\varepsilon^{m_{N,p,s}}\right)^{N/p}-C\varepsilon^{\beta_{N,q,p}}\\
&\leq\frac{1}{N}\mathcal{S}^{N/p}+C\varepsilon^{m_{N,p,s}}-C\varepsilon^{\beta_{N,q,p}}\\
&<\frac{1}{N}\mathcal{S}^{N/p}
\end{aligned}
$$
for still $\varepsilon>0$ sufficiently small, since $m_{N,p,s}>\beta_{N,q,p}$ in the last inequality. This concludes the proof joint with \eqref{stima}.

\vspace{0.3cm}
$\bullet$\, {\em Case (ii).}
We can not apply the previous proof, as discussed in Remark \ref{osservazione}. Since $p<q<p^{\ast}$, by \eqref{tveps} we have $J_\lambda(tv_\varepsilon)\to-\infty$ as $t\to\infty$. From this, considering also that $J_\lambda(0)=0$ and $J_\lambda$ is continuous, there exists $\overline{t}_{\lambda,\varepsilon}>0$ such that $J_\lambda(\overline{t}_{\lambda,\varepsilon}v_\varepsilon)=\sup_{t\geq0} J_\lambda(tv_\varepsilon)$.
Hence, we have
$$\langle J^{\prime}_\lambda(\overline{t}_{\lambda,\varepsilon}v_\varepsilon),v_\varepsilon\rangle=0$$
which yields
\begin{equation}\label{stima1}
\overline{t}_{\lambda,\varepsilon}^{\,p-1}\|v_\varepsilon\|_{\mathrm{X}_0}^p
-\lambda \overline{t}_{\lambda,\varepsilon}^{\,q-1}\|v_\varepsilon\|_q^q-\overline{t}_{\lambda,\varepsilon}^{\,p^{\ast}-1}
=0.
\end{equation}
From this, we get that $\{\overline{t}_{\lambda,\varepsilon}\}_\lambda\subset\mathbb R^+$ is bounded since $p<q<p^{\ast}$. Thus, combining \eqref{stima1} with
$$
\lambda \overline{t}_{\lambda,\varepsilon}^{\,q-1}\|v_\varepsilon\|_q^q\to\infty,\quad\mbox{as }\lambda\to\infty,
$$
we deduce that $\overline{t}_{\lambda,\varepsilon}\to0$ as $\lambda\to\infty$. From this, since $J_\lambda$ is continuous, we obtain
$$
\sup_{t\geq0} J_\lambda(tv_\varepsilon)=J_\lambda(\overline{t}_{\lambda,\varepsilon}v_\varepsilon)\to0,\quad\mbox{as }\lambda\to\infty,
$$
concluding the proof.
\end{proof}

Now, we can verify the mountain pass geometry for $J_\lambda$.

\begin{lemma}\label{lemmageometry}
Let $\lambda>0$ and $\varepsilon>0$ be set as in Lemma \ref{lemmastima}. Then, there exist $\alpha$, $\rho>0$ such that:
\begin{itemize}
\item [$(i)$] for any $u\in \mathrm{X}_0(\Omega)$ with $\|u\|_{\mathrm{X}_0}=\rho$ we have $J_\lambda(u)\geq\alpha$;
\item [$(ii)$] there exists a $t_\varepsilon>0$ sufficiently large, so that $\|t_\varepsilon v_\varepsilon\|_{\mathrm{X}_0}>\rho$ and $J_\lambda(t_\varepsilon v_\varepsilon)<\alpha$, with $v_\varepsilon$ as given in Lemma \ref{lemmastima}.
\end{itemize}
\end{lemma}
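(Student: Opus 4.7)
The plan is to verify the two classical mountain pass conditions directly from the structure of $J_\lambda$, using the Sobolev embeddings built into the space $\mathrm{X}_0(\Omega)$ and the asymptotic behaviour of the fibering map $t\mapsto J_\lambda(tv_\varepsilon)$.

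For part $(i)$, I would start from the definition
$$
J_\lambda(u)=\frac{1}{p}\|u\|_{\mathrm{X}_0}^p-\frac{\lambda}{q}\|u\|_q^q-\frac{1}{p^{\ast}}\|u\|_{p^{\ast}}^{p^{\ast}}.
$$
Since $q\in(p,p^{\ast})$ and $\mathrm{X}_0(\Omega)\hookrightarrow L^r(\Omega)$ continuously for every $r\in[1,p^{\ast}]$ (this uses the gradient part of the norm together with the standard Sobolev embedding in \eqref{sob1}), there exist constants $C_q,C_{p^{\ast}}>0$ with
$$
\|u\|_q^q\le C_q\|u\|_{\mathrm{X}_0}^q,\qquad \|u\|_{p^{\ast}}^{p^{\ast}}\le C_{p^{\ast}}\|u\|_{\mathrm{X}_0}^{p^{\ast}}
$$
for every $u\in\mathrm{X}_0(\Omega)$. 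Substituting gives
$$
J_\lambda(u)\ge \|u\|_{\mathrm{X}_0}^p\left(\frac{1}{p}-\frac{\lambda C_q}{q}\|u\|_{\mathrm{X}_0}^{q-p}-\frac{C_{p^{\ast}}}{p^{\ast}}\|u\|_{\mathrm{X}_0}^{p^{\ast}-p}\right).
$$
Since $p<q<p^{\ast}$ the function inside the parentheses is strictly positive for $\|u\|_{\mathrm{X}_0}$ sufficiently small and tends to $1/p$ as $\|u\|_{\mathrm{X}_0}\to0^+$. Choosing $\rho>0$ small enough, the bracket is bounded below by some positive constant at $\|u\|_{\mathrm{X}_0}=\rho$, which yields a positive $\alpha$ with $J_\lambda(u)\ge\alpha$ whenever $\|u\|_{\mathrm{X}_0}=\rho$.

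For part $(ii)$, I would simply analyse the real variable map
$$
t\mapsto J_\lambda(tv_\varepsilon)=\frac{t^p}{p}\|v_\varepsilon\|_{\mathrm{X}_0}^p-\lambda\frac{t^q}{q}\|v_\varepsilon\|_q^q-\frac{t^{p^{\ast}}}{p^{\ast}}\|v_\varepsilon\|_{p^{\ast}}^{p^{\ast}}.
$$
Because $p^{\ast}$ is the largest exponent among $p,q,p^{\ast}$ and $\|v_\varepsilon\|_{p^{\ast}}=1$ by construction in \eqref{veps}, the negative term $-t^{p^{\ast}}/p^{\ast}$ dominates as $t\to+\infty$, so $J_\lambda(tv_\varepsilon)\to-\infty$. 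Taking $t_\varepsilon>0$ large enough we simultaneously achieve $\|t_\varepsilon v_\varepsilon\|_{\mathrm{X}_0}=t_\varepsilon\|v_\varepsilon\|_{\mathrm{X}_0}>\rho$ and $J_\lambda(t_\varepsilon v_\varepsilon)<0<\alpha$, which finishes the proof.

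Neither step has a real obstacle: the only mildly subtle point is checking that the embedding constants $C_q$ and $C_{p^{\ast}}$ do not depend on $\lambda$, which is clear because the embeddings only involve $\mathrm{X}_0(\Omega)$, so the threshold $\rho$ can be taken independent of $\lambda>0$ in the fixed range given by Lemma \ref{lemmastima}.
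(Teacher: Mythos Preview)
Your proof is correct and follows essentially the same route as the paper: bound the subcritical and critical terms via Sobolev embeddings to get $J_\lambda(u)\ge \|u\|_{\mathrm{X}_0}^p\bigl(\tfrac{1}{p}-\text{lower order}\bigr)$ for part $(i)$, and use $J_\lambda(tv_\varepsilon)\to-\infty$ as $t\to\infty$ for part $(ii)$. One small correction to your closing remark: the threshold $\rho$ \emph{does} depend on $\lambda$ through the term $\tfrac{\lambda C_q}{q}\rho^{q-p}$, and Lemma \ref{lemmastima} does not place any upper bound on $\lambda$, so you cannot claim $\rho$ is uniform in $\lambda$; fortunately the lemma only asks for $\alpha,\rho>0$ for a \emph{fixed} $\lambda$, so this does not affect the argument.
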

\begin{proof}
By the Sobolev embedding and \eqref{sob1}, we have
$$
J_\lambda(u)\geq\frac{1}{p}\|u\|_{\mathrm{X}_0}^p-\lambda C_q\|u\|_{\mathrm{X}_0}^q-\mathcal{S}^{p^{\ast}/p}\|u\|_{\mathrm{X}_0}^{p^{\ast}},
$$
for any $u\in \mathrm{X}_0(\Omega)$. Since $p<q<p^{\ast}$, we can easily get part $(i)$ assuming $\|u\|_{\mathrm{X}_0}$ sufficiently small.
On the other hand, we have
$$
\lim_{t\to\infty}J_\lambda(tv_\varepsilon)=-\infty,
$$
from which we can conclude the proof.
\end{proof}

Now, we are able to prove Theorem \ref{supex}.

\begin{proof}[\bf Proof of Theorem \ref{supex}]
Let $\lambda>0$ and $\varepsilon>0$ be set as in Lemma \ref{lemmastima}.
Thanks to Lemma \ref{lemmageometry}, we can set the critical mountain pass level as
\begin{equation}\label{eq3.6}
m_\lambda=\inf_{\xi\in\Gamma}\max_{t\in[0,1]}J_\lambda(\xi(t)),
\end{equation}
where
$$
\Gamma=\left\{\xi\in C^0([0,1];\mathrm{X}_0(\Omega)):\,\,\xi(0)=0,\,\, \xi(1)=t_\varepsilon v_\varepsilon\right\}.
$$
By Lemma \ref{lemmastima} we have
$$
m_\lambda\leq\sup_{t\geq0} J_{\lambda}(tv_\varepsilon)<\frac{1}{N}\mathcal{S}^{N/p},
$$
in both cases $(i)$ and $(ii)$, so that we can apply part $(i)$ of Lemma \ref{lemmaPS} at level $m_\lambda$. Thus, condition $(PS)_{m_\lambda}$ is verified and by the classical mountain pass theorem, see \cite[Theorem 1.15]{W}, we can conclude the proof.

\end{proof}

%%%%%%%%%%%%%%%%%%%%%%%%%%%%%%%%%%%%%%%%%%%%%%%%%%%%%%%%%%%%%%%%%%%%%%%%%%%%%%%%%%%%%%%%%%%%%%%%%%%%%%%%%%%%%%%%%%%%%%%%%%%

\section{Superlinear case: category theory}\label{sec6}

In this section, we prove Theorem \ref{T1}. For this, we assume along the section that $2\leq p<q<p^{\ast}$ satisfy $m_{N,p,s}>\beta_{N,q,p}$ given in \eqref{parametri}, without further mentioning.

In order to get the existence of multiple critical points for the functional $J_\lambda$, we will use the Lusternik-Schnirelman theory, as developed  in \cite{LS}. This approach allows to find critical points of our functional $J_\lambda$ on a suitable manifold, exploiting the topological properties of the manifold itself. For this, we set the \textit{Nehari manifold} associated to $J_\lambda$ as
$$
\mathcal N_\lambda=\left\{v\in \mathrm{X}_0(\Omega)\setminus\{0\}:\,\,\langle J^{\prime}_\lambda(v),v\rangle=0\right\}.
$$
It is easy to see that $\mathcal N_\lambda$ is not empty.

The idea is to apply the theory as written in \cite[Chapter 5]{W}. For this, let us define $\psi_\lambda:\mathrm{X}_0(\Omega)\to\mathbb R$ as
$$
\psi_\lambda(u)=\langle J^{\prime}_\lambda(u),u\rangle=\|\nabla u\|_p^p+[u]_{s,p}^p-\lambda\|u\|_q^q-\|u\|_{p^{\ast}}^{p^{\ast}}.
$$
We have that $\psi_\lambda\in C^2(\mathrm{X}_0(\Omega);\mathbb R)$ being $p\geq2$, with in particular
$$
\langle \psi^{\prime}_\lambda(u),u\rangle=p\left(\|\nabla u\|_p^p+[u]_{s,p}^p\right)-\lambda q\|u\|_q^q-p^{\ast}\|u\|_{p^{\ast}}^{p^{\ast}}.
$$
From this, we obtain
\begin{align}\label{2.11}
\langle \psi^{\prime}_\lambda(u),u\rangle=\lambda(p-q)\|u\|_q^q+(p-p^{\ast})\|u\|_{p^{\ast}}^{p^{\ast}}<0\qquad\mbox{for any }u\in\mathcal N_\lambda,
\end{align}
so that the structural assumptions on \cite[Section 5.3]{W} are satisfied by $J_\lambda$, $\psi_\lambda$ and $\mathcal N_\lambda$.

In the spirit of \cite{W}, we look for critical points of $J_\lambda$ on $\mathcal N_\lambda$. In order to construct these critical points, by \cite[Proposition 5.12 and Definition 5.17]{W} we set a new compactness condition. We say that functional $J_\lambda$ restricted on $\mathcal N_\lambda$ satisfies the Palais-Smale condition at level $c$, $(PS)_c^{\mathcal N_\lambda}$ for short, if any sequence $\{u_n\}_n\subset\mathcal N_\lambda$ such that
\begin{align}\label{palais-smale-restrict}
	J_\lambda(u_n)\to c \quad \text{and}\quad
	\min_{\gamma\in\mathbb R}\|J^{\prime}_\lambda(u_n)-\gamma\psi^{\prime}_\lambda(u_n)\|_{\left(\mathrm{X}_0(\Omega)\right)^{\ast}}\to0\quad\text{as }n\to\infty,
\end{align}
admits a convergent subsequence in $\mathrm{X}_0(\Omega)$.

\begin{lemma}\label{lemmaPSN}
Let $\lambda>0$ and let $\mathcal{S}$ be as in \eqref{sob1}. Then, the $(PS)_c^{\mathcal N_\lambda}$ condition holds true for any $c<\frac{1}{N}\mathcal{S}^{N/p}$.
\end{lemma}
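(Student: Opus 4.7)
The plan is to reduce the restricted condition $(PS)_c^{\mathcal{N}_\lambda}$ to the standard $(PS)_c$ condition for $J_\lambda$, and then invoke Lemma~\ref{lemmaPS}$(i)$. So, let me take $\{u_n\}_n\subset\mathcal{N}_\lambda$ satisfying \eqref{palais-smale-restrict} and let $\{\gamma_n\}_n\subset\mathbb R$ be a sequence at which the minimum defining \eqref{palais-smale-restrict} is attained (or nearly so), that is $J_\lambda^{\prime}(u_n)-\gamma_n\psi_\lambda^{\prime}(u_n)\to0$ in $(\mathrm{X}_0(\Omega))^{\ast}$ as $n\to\infty$.

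First, I would establish two-sided control on $\|u_n\|_{\mathrm{X}_0}$. Using that $u_n\in\mathcal{N}_\lambda$, I would rewrite
$$
J_\lambda(u_n)=\left(\frac{1}{p}-\frac{1}{q}\right)\|u_n\|_{\mathrm{X}_0}^{p}+\left(\frac{1}{q}-\frac{1}{p^{\ast}}\right)\|u_n\|_{p^{\ast}}^{p^{\ast}},
$$
and since $p<q<p^{\ast}$ both coefficients are strictly positive, so $J_\lambda(u_n)\to c$ forces $\{u_n\}_n$ to be bounded in $\mathrm{X}_0(\Omega)$. For the lower bound, the Nehari identity $\|u_n\|_{\mathrm{X}_0}^{p}=\lambda\|u_n\|_q^q+\|u_n\|_{p^{\ast}}^{p^{\ast}}$ combined with Sobolev embedding gives $\|u_n\|_{\mathrm{X}_0}^{p}\leq C_1\lambda\|u_n\|_{\mathrm{X}_0}^{q}+C_2\|u_n\|_{\mathrm{X}_0}^{p^{\ast}}$, whence $\|u_n\|_{\mathrm{X}_0}\geq\delta>0$ for some constant $\delta$ independent of $n$, since $q,p^{\ast}>p$.

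Next, the key step is to show that $\gamma_n\to0$. Testing $J_\lambda^{\prime}(u_n)-\gamma_n\psi_\lambda^{\prime}(u_n)$ against $u_n$ and using $\langle J_\lambda^{\prime}(u_n),u_n\rangle=0$, I get $\gamma_n\langle\psi_\lambda^{\prime}(u_n),u_n\rangle=\mathrm{o}_n(1)$, thanks to the boundedness of $\{u_n\}_n$. By using the Nehari constraint inside \eqref{2.11}, I would rewrite
$$
\langle\psi_\lambda^{\prime}(u_n),u_n\rangle=(p-q)\|u_n\|_{\mathrm{X}_0}^{p}+(q-p^{\ast})\|u_n\|_{p^{\ast}}^{p^{\ast}}\leq(p-q)\delta^{p}<0,
$$
since both terms are negative. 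This uniform separation from zero yields $\gamma_n\to0$. This is the main obstacle, and it is where the specific structure of $\mathcal{N}_\lambda$ together with $p<q<p^{\ast}$ is crucially exploited.

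Finally, since $\{u_n\}_n$ is bounded in $\mathrm{X}_0(\Omega)$, standard estimates based on H\"older's inequality and the Sobolev embedding show that $\{\psi_\lambda^{\prime}(u_n)\}_n$ is bounded in $(\mathrm{X}_0(\Omega))^{\ast}$. Combining this with $\gamma_n\to0$, I deduce
$$
J_\lambda^{\prime}(u_n)=\gamma_n\psi_\lambda^{\prime}(u_n)+\left[J_\lambda^{\prime}(u_n)-\gamma_n\psi_\lambda^{\prime}(u_n)\right]\to0\quad\text{in }(\mathrm{X}_0(\Omega))^{\ast},
$$
so that $\{u_n\}_n$ satisfies the full Palais-Smale condition \eqref{palais-smale} at level $c$. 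Since $q\in[p,p^{\ast})$ and $c<\frac{1}{N}\mathcal{S}^{N/p}$, Lemma~\ref{lemmaPS}$(i)$ provides a subsequence converging strongly in $\mathrm{X}_0(\Omega)$, which concludes the proof.
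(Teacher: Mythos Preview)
Your proof is correct and follows the same overall strategy as the paper: establish boundedness, show $\gamma_n\to0$, and reduce to the unrestricted $(PS)_c$ condition handled by Lemma~\ref{lemmaPS}$(i)$. The one noteworthy difference is in how you prove $\gamma_n\to0$. The paper extracts a limit $\ell\leq0$ of $\langle\psi_\lambda'(u_n),u_n\rangle$ and splits into the cases $\ell<0$ (giving $\gamma_n\to0$) and $\ell=0$ (giving $u_n\to0$ directly). You instead observe that the Nehari constraint forces a uniform lower bound $\|u_n\|_{\mathrm{X}_0}\geq\delta>0$, which via your rewriting $\langle\psi_\lambda'(u_n),u_n\rangle=(p-q)\|u_n\|_{\mathrm{X}_0}^{p}+(q-p^{\ast})\|u_n\|_{p^{\ast}}^{p^{\ast}}\leq(p-q)\delta^{p}<0$ shows that the degenerate case $\ell=0$ is in fact impossible. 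This is a cleaner route: it eliminates the case split and makes the argument entirely uniform, at the modest cost of one extra line (the lower bound on the Nehari manifold).
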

\begin{proof}
Let $\{u_n\}_n\subset\mathcal{N}_\lambda$ satisfy \eqref{palais-smale-restrict} with $c<\frac{1}{N}\mathcal{S}^{N/p}$. Since $u_n\in\mathcal N_\lambda$, so that $\langle J^{\prime}_\lambda(u_n),u_n\rangle=0$, we can argue as in Lemma \ref{limitata} to prove that $\{u_n\}_n$ is bounded in $\mathrm{X}_0(\Omega)$. From this, we can see that $\left\{\langle \psi^{\prime}_\lambda(u_n),u_n\rangle\right\}_n\subset\mathbb R$ is bounded so that, taking into account \eqref{2.11}, there exists $\ell\in(-\infty,0]$ such that, up to a subsequence, we have
\begin{equation}\label{elle}
\langle \psi^{\prime}_\lambda(u_n),u_n\rangle\to\ell\quad\text{as }n\to\infty.
\end{equation}
Furthermore, by \eqref{palais-smale-restrict} there exists $\{\gamma_n\}_n\subset\mathbb R$ such that
\begin{align}\label{palais-smale-nuova}
\|J^{\prime}_\lambda(u_n)-\gamma_n\psi^{\prime}_\lambda(u_n)\|_{\left(\mathrm{X}_0(\Omega)\right)^{\ast}}\to0\quad\text{as }n\to\infty.
\end{align}
Now, we split the proof in two situations.

\begin{itemize}
  \item If $\ell<0$, then by \eqref{palais-smale-nuova} combined with \eqref{elle} and the fact that $\{u_n\}_n\subset\mathcal N_\lambda$ is bounded, we get that $\gamma_n\to0$ as $n\to\infty$. From this fact used in \eqref{palais-smale-nuova}, we practically see that $\{u_n\}_n$ satisfies \eqref{palais-smale}, so that we conclude by Lemma \ref{lemmaPS}.

  \item If $\ell=0$, by \eqref{2.11} and \eqref{elle} we obtain
$$
\|u_n\|_q\to0\quad\mbox{and}\quad\|u_n\|_{p^{\ast}}\to0\quad\mbox{as }\,\,n\to\infty.
$$
From this, since $u_n\in\mathcal N_\lambda$ we get that $u_n\to0$ in $\mathrm{X}_0(\Omega)$ as $n\to\infty$.
\end{itemize}

\end{proof}

Now, arguing as in case $(i)$ of Lemma \ref{lemmastima} we provide the following control for $J_{\lambda}$ in $\mathcal N_\lambda$. For this, we strongly need that $m_{N,p,s}>\beta_{N,q,p}$.

\begin{lemma}\label{lemmastimanehari}
Let $\lambda>0$ and let $\mathcal{S}$ be set as in \eqref{sob1}. Then, we have
\begin{equation}\label{nehari1}
\inf_{v\in\mathcal N_\lambda} J_{\lambda}(v)<\frac{1}{N}\mathcal{S}^{N/p},
\end{equation}
and there exists $u_\lambda\in\mathcal N_\lambda$ such that
$$
J_{\lambda}(u_\lambda)=\inf_{v\in\mathcal N_\lambda} J_{\lambda}(v).
$$
\end{lemma}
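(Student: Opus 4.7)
The plan is to establish the two claims sequentially: the strict upper bound by testing with the concentrating family $v_\varepsilon$ of Section \ref{sec5}, and the existence of a minimizer by combining Ekeland's variational principle on $\mathcal N_\lambda$ with the compactness result of Lemma \ref{lemmaPSN}.

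For the strict inequality \eqref{nehari1}, I would first observe that for every nonzero $u\in\mathrm{X}_0(\Omega)$, since $p<q<p^\ast$, the map $t\mapsto J_\lambda(tu)$ vanishes at $t=0$, is positive for small $t>0$ (by the Sobolev embedding and $p<q$), and tends to $-\infty$ as $t\to\infty$. Hence it attains a positive maximum at some $t(u)>0$, where $\frac{d}{dt}J_\lambda(tu)|_{t=t(u)}=0$; by homogeneity this is equivalent to $\langle J_\lambda'(t(u)u),t(u)u\rangle=0$, i.e.\ $t(u)u\in\mathcal N_\lambda$. Applying this projection to $u=v_\varepsilon$ and invoking part $(i)$ of Lemma \ref{lemmastima}, which is applicable thanks to $m_{N,p,s}>\beta_{N,q,p}$, I obtain for $\varepsilon>0$ sufficiently small
$$
\inf_{v\in\mathcal N_\lambda}J_\lambda(v)\leq J_\lambda(t(v_\varepsilon)v_\varepsilon)=\sup_{t\geq 0}J_\lambda(tv_\varepsilon)<\frac{1}{N}\mathcal{S}^{N/p}.
$$

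For the existence of a minimizer I would take a minimizing sequence $\{u_n\}_n\subset\mathcal N_\lambda$ with $J_\lambda(u_n)\to c:=\inf_{\mathcal N_\lambda}J_\lambda$. The identity $J_\lambda(u)-\frac{1}{q}\langle J_\lambda'(u),u\rangle=(\frac{1}{p}-\frac{1}{q})\|u\|_{\mathrm{X}_0}^p$, valid on $\mathcal N_\lambda$, shows both that $\{u_n\}_n$ is bounded in $\mathrm{X}_0(\Omega)$ and that $J_\lambda\geq 0$ on $\mathcal N_\lambda$, so $c\in[0,\frac{1}{N}\mathcal{S}^{N/p})$ by the first step. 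Since $p\geq 2$ the function $\psi_\lambda$ is $C^1$, and the nondegeneracy \eqref{2.11} makes $\mathcal N_\lambda$ a $C^1$-manifold; Ekeland's variational principle then upgrades $\{u_n\}_n$ to a sequence satisfying \eqref{palais-smale-restrict} at level $c$. Lemma \ref{lemmaPSN} applied at this level yields, up to a subsequence, strong convergence $u_n\to u_\lambda$ in $\mathrm{X}_0(\Omega)$, hence $J_\lambda(u_\lambda)=c$ and $\langle J_\lambda'(u_\lambda),u_\lambda\rangle=0$ by continuity. To conclude $u_\lambda\in\mathcal N_\lambda$, it remains to check $u_\lambda\neq 0$: on the Nehari identity $\|u_n\|_{\mathrm{X}_0}^p=\lambda\|u_n\|_q^q+\|u_n\|_{p^\ast}^{p^\ast}$ the Sobolev embedding forces $\|u_n\|_{\mathrm{X}_0}^p\leq C(\lambda\|u_n\|_{\mathrm{X}_0}^q+\|u_n\|_{\mathrm{X}_0}^{p^\ast})$, which, since $q,p^\ast>p$, gives a uniform lower bound $\|u_n\|_{\mathrm{X}_0}\geq\delta>0$ that passes to the limit.

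The step I expect to be the main obstacle is the Ekeland upgrade: one has to control the Lagrange-multiplier residual appearing in \eqref{palais-smale-restrict} by the tangential $\mathcal N_\lambda$-gradient of $J_\lambda$, which requires a uniform lower bound on $|\langle \psi_\lambda'(u_n),u_n\rangle|$ along the minimizing sequence. Fortunately this follows directly from \eqref{2.11} once one knows $\{u_n\}_n$ stays away from $0$, which the argument above already provides. All the remaining steps are routine combinations of Lemma \ref{lemmaPSN}, Lemma \ref{lemmastima} and the boundedness analysis of Lemma \ref{limitata}.
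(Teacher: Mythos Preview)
Your proposal is correct and follows essentially the same two-step strategy as the paper: project $v_\varepsilon$ onto $\mathcal N_\lambda$ and use the estimates of Section~\ref{sec5} for \eqref{nehari1}, then combine the constrained Palais--Smale condition of Lemma~\ref{lemmaPSN} with a variational principle for attainment (the paper cites \cite[Theorem 7.12]{AM} in place of your explicit Ekeland argument). Your observation that the Nehari projection coincides with the maximizer of $t\mapsto J_\lambda(tv_\varepsilon)$, so that Lemma~\ref{lemmastima}$(i)$ applies verbatim, is in fact slightly cleaner than the paper's route, which re-derives the upper and lower bounds on $t_{\varepsilon,\lambda}$ before invoking the same endgame.

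One minor slip: the displayed identity $J_\lambda(u)-\tfrac{1}{q}\langle J_\lambda'(u),u\rangle=(\tfrac{1}{p}-\tfrac{1}{q})\|u\|_{\mathrm{X}_0}^p$ is missing the nonnegative term $(\tfrac{1}{q}-\tfrac{1}{p^\ast})\|u\|_{p^\ast}^{p^\ast}$; since this extra piece has the right sign, your conclusions about boundedness and $c\geq 0$ are unaffected.
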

\begin{proof}
Let $\varepsilon>0$ and let $v_\varepsilon$ be as in \eqref{veps}. Let us consider
$$
\psi_\lambda(tv_\varepsilon)=t^p\left(\|\nabla v_\varepsilon\|_p^p+[v_\varepsilon]_{s,p}^p\right)-\lambda t^q\|v_\varepsilon\|_q^q-t^{p^{\ast}}
=t^ph_{\varepsilon,\lambda}(t)
$$
where
$$
h_{\varepsilon,\lambda}(t):=\|\nabla v_\varepsilon\|_p^p+[v_\varepsilon]_{s,p}^p-\lambda t^{q-p}\|v_\varepsilon\|_q^q-t^{p^{\ast}-p}.
$$
Of course, $h_{\varepsilon,\lambda}$ is a continuous function with $h_{\varepsilon,\lambda}(0)>0$ and $h_{\varepsilon,\lambda}(t)\to-\infty$ as $t\to\infty$. Hence, there exists $t_{\varepsilon,\lambda}>0$ such that $h_{\varepsilon,\lambda}(t_{\varepsilon,\lambda})=0$, namely
\begin{equation}\label{4.11}
\begin{aligned}
\psi_\lambda(t_{\varepsilon,\lambda}v_\varepsilon)=\langle J^{\prime}_\lambda(t_{\varepsilon,\lambda}v_\varepsilon),t_{\varepsilon,\lambda}v_\varepsilon\rangle
=t_{\varepsilon,\lambda}^p\left(\|\nabla v_\varepsilon\|_p^p+[v_\varepsilon]_{s,p}^p\right)-\lambda t_{\varepsilon,\lambda}^q\|v_\varepsilon\|_q^q-t_{\varepsilon,\lambda}^{p^{\ast}}=0
\end{aligned}
\end{equation}
so that $t_{\varepsilon,\lambda}u_\varepsilon\in\mathcal N_\lambda$.

By \eqref{4.11}, Lemmas \ref{lemmanorma} and \ref{lemmagaglia}, we get
$$
0=t_{\varepsilon,\lambda}^p\left(\|\nabla v_\varepsilon\|_p^p+[v_\varepsilon]_{s,p}^p\right)-\lambda t_{\varepsilon,\lambda}^q\|v_\varepsilon\|_q^q-t_{\varepsilon,\lambda}^{p^{\ast}}
\leq t_{\varepsilon,\lambda}^p\left(\mathcal{S}+C\varepsilon^{m_{N,p,s}}\right)-t_{\varepsilon,\lambda}^{p^{\ast}}.
$$
from which
\begin{equation}\label{4.12}
t_{\varepsilon,\lambda}\leq\left(\mathcal{S}+C\varepsilon^{m_{N,p,s}}\right)^{1/(p^{\ast}-p)}.
\end{equation}
By \eqref{4.11}, considering \eqref{sob1} with $\|v_\varepsilon\|_{p^{\ast}}=1$, we also have
$$
\lambda t_{\varepsilon,\lambda}^{q-p}\|v_\varepsilon\|_q^q+t_{\varepsilon,\lambda}^{p^{\ast}-p}=\|\nabla v_\varepsilon\|_p^p+[v_\varepsilon]_{s,p}^p\geq \mathcal{S}
$$
so that by Sobolev embedding and Lemma \ref{lemmanorma}, we get
$$
\mathcal{S}\leq\lambda t_{\varepsilon,\lambda}^{q-p}C\left(\mathcal{S}+C\varepsilon^{(N-p)/(p-1)}\right)+t_{\varepsilon,\lambda}^{p^{\ast}-p},
$$
which implies that $t_{\varepsilon,\lambda}\geq\mu_\lambda>0$ for $\varepsilon>0$ sufficiently small. From this point, we can argue exactly as in the proof of case $(i)$ of Lemma \ref{lemmastima} to show that
$$
J_{\lambda}(t_{\varepsilon,\lambda}v_\varepsilon)<\frac{1}{N}\mathcal{S}^{N/p},
$$
which immediately yields \eqref{nehari1}.

Finally, by \eqref{palais-smale-restrict}, Lemma \ref{lemmaPSN} and \cite[Theorem 7.12]{AM}, we get that there exists $u_\lambda\in\mathcal N_\lambda$ such that
$$
J_{\lambda}(u_\lambda)=\inf_{v\in\mathcal N_\lambda} J_{\lambda}(v),
$$
concluding the proof.
\end{proof}

Let us introduce the following notation
$$
\mathcal D^{1,p}(\mathbb R^N) = \left\{u\in L^{p^{\ast}}(\mathbb R^N):\,\,\frac{\partial u}{\partial x_i}\in L^p(\mathbb R^N),\,\,\text{weak derivatives}, \,\,i=1,\ldots,N\right\}.
$$
Then, by a classical argument in \cite[Theorem I.1]{Lions}, see also \cite[Lemma 3.1]{AD}, we can state the following technical lemma.

\begin{lemma}\label{lemmalions}
Let $\{u_n\}_n\subset \mathrm{X}_0(\Omega)$ be a sequence with $\|u_n\|_{p^{\ast}}=1$ and $\|\nabla u_n\|_p^p\to \mathcal{S}$. Then, there exists a sequence $\{(y_n,\theta_n)\}_n\subset\mathbb R^N\times\mathbb R^+$ such that $\{v_n\}_n$ with
$$
v_n(x):=\theta_n^{(N-p)/p}u_n(\theta_n x+y_n)
$$
admits a convergent subsequence, still denoted by $\{v_n\}_n$, such that $v_n\to v$ in $\mathcal D^{1,p}(\mathbb R^N)$. Moreover, $\theta_n\to0$ and $y_n\to y\in\overline{\Omega}$.
\end{lemma}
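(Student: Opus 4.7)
The plan is to adapt Lions' classical concentration-compactness argument for the limit case of the critical Sobolev embedding in $\mathcal{D}^{1,p}(\mathbb R^N)$, using the Lévy concentration function to simultaneously fix the scale and location. First, I would extend each $u_n$ by zero outside $\Omega$ so that the sequence sits inside $\mathcal{D}^{1,p}(\mathbb R^N)$. Consider the Lévy concentration function
$$
Q_n(r) := \sup_{y\in\mathbb R^N}\int_{B_r(y)}|u_n|^{p^{\ast}}\,dx,
$$
which is continuous, nondecreasing in $r$, with $Q_n(0^+)=0$ and $Q_n(+\infty)=1$. Choose $\theta_n>0$ and $y_n\in\mathbb R^N$ such that $Q_n(\theta_n)=\int_{B_{\theta_n}(y_n)}|u_n|^{p^{\ast}}dx=\frac{1}{2}$. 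Then $v_n(x):=\theta_n^{(N-p)/p}u_n(\theta_n x+y_n)$ belongs to $\mathcal{D}^{1,p}(\mathbb R^N)$, and the scale invariance of both $\|\cdot\|_{p^{\ast}}$ and $\|\nabla\cdot\|_p$ yields $\|v_n\|_{p^{\ast}}=1$ and $\|\nabla v_n\|_p^p\to\mathcal{S}$, while the change of variables gives $\int_{B_1(0)}|v_n|^{p^{\ast}}dx=\frac12=\sup_y \int_{B_1(y)}|v_n|^{p^{\ast}}dx$.

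Since $\{v_n\}_n$ is bounded in $\mathcal{D}^{1,p}(\mathbb R^N)$, up to subsequence $v_n\rightharpoonup v$ weakly and a.e. Applying Lions' concentration-compactness principle of the first type in the critical case (see \cite[Lemma I.1]{Lions}) to the bounded measures $\mu_n=|\nabla v_n|^p$ and $\nu_n=|v_n|^{p^{\ast}}$, we obtain at most countably many atoms $\{x_j\}$ with masses $\mu_j,\nu_j$ satisfying $\nu_j\leq\mathcal S^{-p^{\ast}/p}\mu_j^{p^{\ast}/p}$, together with $\int|v|^{p^{\ast}}+\sum\nu_j=1$ and $\int|\nabla v|^p+\sum\mu_j\leq\mathcal S$. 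Setting $b_0=(\mathcal S^{-1}\int|\nabla v|^p)^{p/p^{\ast}}\cdot(\ldots)$, or more transparently using $\nu_j\leq(\mathcal S^{-1}\mu_j)^{p^{\ast}/p}$, $p^{\ast}/p>1$, and the elementary inequality $\sum a_i^{p^{\ast}/p}\leq(\sum a_i)^{p^{\ast}/p}$ for nonnegative $a_i$ with $\sum a_i\leq 1$, together with the saturated total mass relations, one deduces that \emph{all} of the mass must lie at a single ``atom'': either no Dirac concentration and $\int|v|^{p^{\ast}}=1$, or a single Dirac $\nu_{j_0}=1$ with $v\equiv0$. The latter alternative contradicts $\sup_y\int_{B_1(y)}|v_n|^{p^{\ast}}dx=\frac12$, which prevents any single point from absorbing the whole mass. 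Hence $v\not\equiv 0$, $\|v\|_{p^{\ast}}=1$, $\|\nabla v\|_p^p=\mathcal{S}$, and the norm convergence $\|\nabla v_n\|_p\to\|\nabla v\|_p$ together with weak convergence in the uniformly convex space $\mathcal{D}^{1,p}(\mathbb R^N)$ promotes the convergence to strong, i.e.\ $v_n\to v$ in $\mathcal{D}^{1,p}(\mathbb R^N)$.

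It remains to verify the claims on $\theta_n$ and $y_n$. Since $u_n\equiv 0$ outside $\Omega$ and $\int_{B_{\theta_n}(y_n)}|u_n|^{p^{\ast}}dx=\frac12>0$, the ball $B_{\theta_n}(y_n)$ must intersect $\Omega$ in a set of positive measure, hence $\operatorname{dist}(y_n,\Omega)\leq\theta_n$. Suppose, for contradiction, that $\theta_n\not\to 0$; along a subsequence $\theta_n\to\theta_0>0$, so that $\{y_n\}$ is bounded by $\operatorname{diam}(\Omega)+\theta_0+1$ and, up to a further subsequence, $y_n\to y_0$. Then $\operatorname{supp} v_n\subseteq\theta_n^{-1}(\Omega-y_n)\to\theta_0^{-1}(\overline{\Omega}-y_0)$, a fixed bounded set, so the limit $v$ would be supported in a bounded subset of $\mathbb R^N$. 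But $v$ is a minimizer of the Sobolev quotient on $\mathcal{D}^{1,p}(\mathbb R^N)$, hence by Talenti's classification a positive rescaled bubble with $\operatorname{supp} v=\mathbb R^N$, a contradiction. Therefore $\theta_n\to 0$, and then the bound $\operatorname{dist}(y_n,\Omega)\leq\theta_n\to 0$ together with the boundedness of $\Omega$ gives $\{y_n\}\subset\overline{\Omega+B_{\theta_n}(0)}$ bounded, hence $y_n\to y\in\overline{\Omega}$ along a subsequence.

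The main obstacle is the concentration-compactness step: one must carefully exploit the strict inequality $p^{\ast}/p>1$ and the normalization $Q_n(\theta_n)=\frac12$ to simultaneously rule out vanishing, splitting of mass between Diracs and the absolutely continuous part, and full concentration at a single point. The subsequent steps about $\theta_n$ and $y_n$ are then topological consequences of the support condition $u_n=0$ outside $\Omega$ coupled with the non-attainment of $\mathcal S$ on bounded domains.
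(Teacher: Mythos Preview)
The paper does not actually give a proof of this lemma: it simply states the result and refers to \cite[Theorem I.1]{Lions} and \cite[Lemma 3.1]{AD}. Your proposal is precisely the classical Lions concentration--compactness argument from those references, so you are following exactly the intended route.

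Two small points are worth tightening. First, in the measure decomposition you write $\int|v|^{p^{\ast}}+\sum_j\nu_j=1$, but a priori some $L^{p^{\ast}}$-mass $\nu_\infty$ may escape to infinity; the full identity is $\int|v|^{p^{\ast}}+\sum_j\nu_j+\nu_\infty=1$, and the same strict-convexity argument (with the corresponding $\mu_\infty$) then forces $\nu_\infty=0$ along with the Diracs, using the normalization $\sup_y\int_{B_1(y)}|v_n|^{p^{\ast}}=\tfrac12$ to exclude the alternatives. You allude to this in your last paragraph (``rule out vanishing''), but it should appear explicitly in the body of the argument. Second, when you assume $\theta_n\not\to0$ you pass to $\theta_n\to\theta_0>0$; you should also dispose of the possibility $\theta_n\to\infty$ along a subsequence, which is immediate since then $\mathrm{supp}\,v_n\subset\theta_n^{-1}(\Omega-y_n)$ has diameter at most $\mathrm{diam}(\Omega)/\theta_n\to0$, again forcing $v$ to have bounded support and contradicting Talenti's classification. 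With these two clarifications your sketch is complete.
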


Since $\Omega$ is a smooth and bounded domain of $\mathbb R^N$, we can choose $r>0$ small enough so that the following sets
$$
\Omega_r^+=\left\{x\in\mathbb R^N:\,\,\text{dist}(x,\Omega)<r\right\},\qquad\Omega_r^-=\left\{x\in\mathbb R^N:\,\,\text{dist}(x,\Omega)>r\right\}
$$
are homotopically equivalent to $\Omega$. We can also assume that $B_r(0)\subset\Omega$.

From now on, we consider
$$
\mathrm{X}_{0,rad}(B_r(0))=\left\{u\in \mathrm{X}_0(B_r(0)):\,\,u\mbox{ is radial}\right\}
$$
and we set $J_{\lambda,B_r}:\mathrm{X}_{0,rad}(B_r(0))\to\mathbb R$ given as
$$
J_{\lambda,B_r}(u)=\frac{1}{p}\left(\int_{B_r(0)}|\nabla u|^pdx+\iint_{\mathbb R^{2N}}\frac{|u(x)-u(y)|^p}{|x-y|^{N+ps}}\,dx\,dy\right)
-\frac{\lambda}{q}\int_{B_r(0)}|u|^q dx-\frac{1}{p^{\ast}}\int_{B_r(0)}|u|^{p^{\ast}}dx.
$$
Then, setting for simplicity
$$
\mathcal N_{\lambda,B_r}=\left\{v\in \mathrm{X}_{0,rad}(B_r(0))\setminus\{0\}:\,\,\langle J_{\lambda,B_r}^{\prime}(v),v\rangle=0\right\}
$$
we can easily see that all previous results proved for $J_\lambda$ are valid also for $J_{\lambda,B_r}$. In particular, considering that $v_\varepsilon$ in \eqref{veps} is radial, by Lemma \ref{lemmastimanehari} there exists $u_{\lambda,B_r}\in\mathcal N_{\lambda,B_r}$ such that
\begin{equation}\label{proprieta}
 J_{\lambda,B_r}(u_{\lambda,B_r})=\inf_{v\in\mathcal N_{\lambda,B_r}}J_{\lambda,B_r}(v)<\frac{1}{N}\mathcal{S}^{N/p}.
\end{equation}
From this function $u_{\lambda,B_r}$, denoting $m(\lambda):=J_{\lambda,B_r}(u_{\lambda,B_r})$ for simplicity, we can set the sublevel
$$
J_\lambda^{m(\lambda)}=\left\{u\in\mathcal N_\lambda:\,\,J_\lambda(u)\leq m(\lambda)\right\}
$$
and the function $\zeta_\lambda:\Omega_r^-\to J_\lambda^{m(\lambda)}$ as
\begin{equation}\label{5.10}
\zeta_\lambda(y)(x)=\begin{cases}
			u_{\lambda,B_r}(x-y) & \text{if } x\in B_r(y),\\[1ex]
			0 &\text{otherwise},
		\end{cases}
\end{equation}
for any $y\in\Omega_r^-$, which is well defined.
Let us set the barycenter map $\beta_\lambda:\mathcal N_\lambda\to\mathbb R^N$ as
$$
\beta_\lambda(u):=\frac{1}{\mathcal{S}^{N/p}}\int_{B_r(y)} x|u|^{p^{\ast}}dx.
$$
Then, for any $y\in\Omega_r^-$, we have
$$
\begin{aligned}
\left(\beta_\lambda\circ\zeta_\lambda\right)(y)&=\frac{1}{\mathcal{S}^{N/p}}\int_\Omega x |u_{\lambda,B_r}(x-y)|^{p^{\ast}}dx
=\frac{1}{\mathcal{S}^{N/p}}\int_\Omega (z+y) |u_{\lambda,B_r}(z)|^{p^{\ast}}dz\\
&=\frac{1}{\mathcal{S}^{N/p}}\int_\Omega z|u_{\lambda,B_r}(z)|^{p^{\ast}}dz+\frac{y}{\mathcal{S}^{N/p}}\int_\Omega |u_{\lambda,B_r}(z)|^{p^{\ast}}dz\\
&=y\frac{\|u_{\lambda,B_r}\|_{p^{\ast}}^{p^{\ast}}}{\mathcal{S}^{N/p}}
\end{aligned}
$$
where last equality follows from the fact that $u_{\lambda,B_r}$ is radial.

\begin{lemma}\label{lemma5.3}
There exists $\lambda_{{\ast}{\ast}}>0$ such that for any $\lambda\in(0,\lambda_{{\ast}{\ast}})$ we have that if $u\in J_\lambda^{m(\lambda)}$ then $\beta_\lambda(u)\in\Omega_r^+$.
\end{lemma}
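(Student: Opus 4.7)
The plan is to argue by contradiction along sequences $\lambda_n\to 0^{+}$: assume there exist $u_n\in J_{\lambda_n}^{m(\lambda_n)}$ with $\beta_{\lambda_n}(u_n)\notin\Omega_r^{+}$ and derive a contradiction by showing that, asymptotically, the mass of $|u_n|^{p^{\ast}}$ must concentrate at a point of $\overline{\Omega}\subset\Omega_r^{+}$.

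First, I would establish that $m(\lambda)\to\frac{1}{N}\mathcal{S}^{N/p}$ as $\lambda\to0^{+}$. The upper bound \eqref{proprieta} is already available from Lemma~\ref{lemmastimanehari}. For the lower bound, any $v\in\mathcal N_{\lambda,B_r}$ satisfies
$$
J_{\lambda,B_r}(v)=\left(\frac1p-\frac1{p^{\ast}}\right)\|v\|_{\mathrm{X}_0}^{p}-\lambda\left(\frac1q-\frac1{p^{\ast}}\right)\|v\|_q^q,
$$
and combining this with the inequality $\|v\|_{\mathrm{X}_0}^{p}\geq\mathcal{S}^{p^{\ast}/(p^{\ast}-p)}$ that one extracts from the Nehari condition $\|v\|_{\mathrm{X}_0}^{p}\geq\|v\|_{p^{\ast}}^{p^{\ast}}$ coupled with \eqref{sob1} (after controlling the subcritical term by H\"older and Young as in the proof of Lemma~\ref{lemmaPS}(ii)), one obtains $m(\lambda)\geq\frac{1}{N}\mathcal{S}^{N/p}-\mathrm{o}(1)$ as $\lambda\to0^{+}$.

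Now consider any sequence $\{u_n\}_n$ with $u_n\in\mathcal N_{\lambda_n}$ and $J_{\lambda_n}(u_n)\leq m(\lambda_n)$. Writing again
$$
J_{\lambda_n}(u_n)=\left(\frac1p-\frac1{p^{\ast}}\right)\|u_n\|_{\mathrm{X}_0}^{p}-\lambda_n\left(\frac1q-\frac1{p^{\ast}}\right)\|u_n\|_q^q,
$$
the convergence $m(\lambda_n)\to\frac{1}{N}\mathcal{S}^{N/p}$ together with a H\"older--Young estimate on the $q$-term forces $\|u_n\|_{\mathrm{X}_0}^{p}\to\mathcal{S}^{p^{\ast}/(p^{\ast}-p)}$ and $\|u_n\|_{p^{\ast}}^{p^{\ast}}\to\mathcal{S}^{N/p}$, while $\lambda_n\|u_n\|_q^q\to0$. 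Setting $\widetilde u_n=u_n/\|u_n\|_{p^{\ast}}$, I would deduce $\|\widetilde u_n\|_{p^{\ast}}=1$ and, using Lemma~\ref{lemmaBDD} to control the Gagliardo seminorm by the gradient norm (so that $[\widetilde u_n]_{s,p}^{p}$ contributes at most a constant multiple of $\|\nabla\widetilde u_n\|_p^{p}$ which itself must saturate $\mathcal{S}$), one concludes $\|\nabla\widetilde u_n\|_p^{p}\to\mathcal{S}$. This is precisely the hypothesis of Lemma~\ref{lemmalions}, which yields $y_n\in\mathbb R^N$, $\theta_n\to0^{+}$, $y_n\to y\in\overline{\Omega}$, and $v_n(x):=\theta_n^{(N-p)/p}\widetilde u_n(\theta_n x+y_n)$ convergent in $\mathcal D^{1,p}(\mathbb R^N)$.

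Finally, I would change variables in the barycenter: writing $x=\theta_n z+y_n$,
$$
\beta_{\lambda_n}(u_n)=\frac{\|u_n\|_{p^{\ast}}^{p^{\ast}}}{\mathcal{S}^{N/p}}\int_{\mathbb R^N}(\theta_n z+y_n)|v_n(z)|^{p^{\ast}}dz=y_n\frac{\|u_n\|_{p^{\ast}}^{p^{\ast}}}{\mathcal{S}^{N/p}}+\theta_n\frac{\|u_n\|_{p^{\ast}}^{p^{\ast}}}{\mathcal{S}^{N/p}}\int_{\mathbb R^N}z|v_n(z)|^{p^{\ast}}dz.
$$
Since $\|u_n\|_{p^{\ast}}^{p^{\ast}}\to\mathcal{S}^{N/p}$, $\theta_n\to0$, and $\{v_n\}_n$ converges in $L^{p^{\ast}}(\mathbb R^N)$ (so that $\int z|v_n|^{p^{\ast}}$ stays bounded modulo a mild truncation argument on the tails), I infer $\beta_{\lambda_n}(u_n)\to y\in\overline{\Omega}\subset\Omega_r^{+}$, contradicting $\beta_{\lambda_n}(u_n)\notin\Omega_r^{+}$ for all $n$. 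The main obstacle I foresee is the lower bound $m(\lambda)\to\frac{1}{N}\mathcal{S}^{N/p}$: extracting a clean limit requires a careful interplay between the Nehari constraint, the Sobolev inequality, and the control on the subcritical $q$-term, and it is the step where the restriction to small $\lambda$ is truly exploited.
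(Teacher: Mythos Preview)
Your argument is correct and follows the same contradiction--concentration route as the paper: assume $\beta_{\lambda_n}(u_n)\notin\Omega_r^+$, normalize to a minimizing sequence for $\mathcal S$, apply Lemma~\ref{lemmalions}, and compute the limit of the barycenter. Two streamlinings appear in the paper that address precisely the points you flag: first, the lower bound $m(\lambda)\to\frac{1}{N}\mathcal S^{N/p}$ is not needed, since the upper bound \eqref{proprieta} alone gives $\limsup\|u_n\|_{\mathrm X_0}^p\le\mathcal S^{N/p}$ while the matching lower bound (and hence $\|u_n\|_{p^{\ast}}^{p^{\ast}}\to\mathcal S^{N/p}$) follows directly from the Nehari constraint combined with \eqref{sob1}; second, in the barycenter step the paper replaces the unbounded weight $x$ by a fixed $\phi\in C_0^\infty(\mathbb R^N)$ with $\phi=\mathrm{id}$ on $\overline{\Omega}$, after which $\phi(\theta_n z+y_n)|v_n(z)|^{p^{\ast}}$ is uniformly dominated and the Lebesgue theorem applies immediately, so your ``mild truncation argument on the tails'' is bypassed entirely.
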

\begin{proof}
Let us argue by contradiction, assuming there exist $\{\lambda_n\}_n\subset\mathbb R^+$ and $\{u_n\}_n\subset J_{\lambda_n}^{m(\lambda_n)}$ such that $\lambda_n\to0$ but $\beta_{\lambda_n}(u_n)\notin\Omega_r^+$. Since $u_n\in J_{\lambda_n}^{m(\lambda_n)}$, we get
\begin{equation}\label{5.36prima}
\frac{1}{p}\|u_n\|_{\mathrm{X}_0}^p-\frac{\lambda_n}{q}\|u_n\|_q^q-\frac{1}{p^{\ast}}\|u_n\|_{p^{\ast}}^{p^{\ast}}=J_{\lambda_n}(u_n)\leq m(\lambda_n)
\end{equation}
and
\begin{equation}\label{5.37prima}
\|u_n\|_{\mathrm{X}_0}^p-\lambda_n\|u_n\|_q^q-\|u_n\|_{p^{\ast}}^{p^{\ast}}=\langle J^{\prime}_{\lambda_n}(u_n),u_n\rangle=0.
\end{equation}
Combining \eqref{5.36prima} and \eqref{5.37prima}, arguing as in Lemma \ref{limitata}, we easily get the boundedness of $\{u_n\}_n\subset \mathrm{X}_0(\Omega)$ and so of $\{\|u_n\|_q\}_n\subset\mathbb R^+$. Thus, by \eqref{5.36prima} we get
\begin{equation}\label{5.36}
\frac{1}{p}\|u_n\|_{\mathrm{X}_0}^p-\frac{1}{p^{\ast}}\|u_n\|_{p^{\ast}}^{p^{\ast}}\leq m(\lambda_n)+\text{o}_n(1)
\end{equation}
and by \eqref{5.37prima}
\begin{equation}\label{5.37}
\|u_n\|_{\mathrm{X}_0}^p-\lambda_n\|u_n\|_q^q-\|u_n\|_{p^{\ast}}^{p^{\ast}}=\text{o}_n(1),
\end{equation}
as $n\to\infty$. By \eqref{5.36} and \eqref{5.37}, it follows that
$$
\frac{1}{N}\|u_n\|_{\mathrm{X}_0}^p\leq m(\lambda_n)+\text{o}_n(1)
$$
which yields, jointly with \eqref{proprieta}, that
\begin{equation}\label{5.38}
\|u_n\|_{\mathrm{X}_0}^p\leq \mathcal{S}^{N/p}+\text{o}_n(1)
\end{equation}
as $n\to\infty$.

Let us set $w_n=u_n/\|u_n\|_{p^{\ast}}$. Using \eqref{sob1}, \eqref{5.37} and \eqref{5.38}, we have
$$
\mathcal{S}\leq\|\nabla w_n\|_p^p\leq\|w_n\|_{\mathrm{X}_0}^p=\frac{\|u_n\|_{\mathrm{X}_0}^p}{\|u_n\|_{p^{\ast}}^p}\leq\|u_n\|_{\mathrm{X}_0}^{p-p^2/p^{\ast}}+\text{o}_n(1)\leq \mathcal{S}+\text{o}_n(1)
$$
as $n\to\infty$. Hence, we construct a sequence $\{w_n\}_n\subset \mathrm{X}_0(\Omega)$ such that $\|w_n\|_{p^{\ast}}=1$ and $\|\nabla w_n\|_p^p\to S$ as $n\to\infty$.
By Lemma \ref{lemmalions}, there exists a sequence $\{(y_n,\theta_n)\}_n\subset\mathbb R^N\times\mathbb R^+$ such that the sequence $\{v_n\}_n$ set as
$$
v_n(x)=\theta_n^{(N-p)/p}w_n(\theta_nx+y_n)
$$
converges, up to a subsequence, to some $v\in W^{1,p}(\mathbb R^N)$. By \eqref{5.37} we get
$$
\mathcal{S}\|u_n\|_{p^{\ast}}^p+\text{o}_n(1)=\|\nabla u_n\|_p^p\leq\|u_n\|_{\mathrm{X}_0}^p=\|u_n\|_{p^{\ast}}^{p^{\ast}}+\text{o}_n(1)
$$
which implies
\begin{equation}\label{5.41}
\|u_n\|_{p^{\ast}}^{p^{\ast}}\geq \mathcal{S}^{N/p}+\text{o}_n(1)
\end{equation}
as $n\to\infty$.
Similarly, by \eqref{5.38} we get
$$
\mathcal{S}\|u_n\|_{p^{\ast}}^p+\text{o}_n(1)=\|\nabla u_n\|_p^p\leq\|u_n\|_{\mathrm{X}_0}^p\leq \mathcal{S}^{N/p}+\text{o}_n(1)
$$
which implies
\begin{equation}\label{5.42}
\|u_n\|_{p^{\ast}}^{p^{\ast}}\leq \mathcal{S}^{N/p}+\text{o}_n(1)
\end{equation}
as $n\to\infty$.
Thus, by \eqref{5.41} and \eqref{5.42} we conclude that
\begin{equation}\label{5.43}
\|u_n\|_{p^{\ast}}^{p^{\ast}}\to \mathcal{S}^{N/p}\quad\mbox{as }n\to\infty.
\end{equation}
We observe that
$$
\beta_{\lambda_n}(u_n)=\frac{1}{\mathcal{S}^{N/p}}\int_\Omega x|u_n|^{p^{\ast}}dx=\frac{\|u_n\|_{p^{\ast}}^{p^{\ast}}}{\mathcal{S}^{N/p}}\int_\Omega x|w_n|^{p^{\ast}}dx.
$$
For $\phi\in C_0^\infty(\mathbb R^N)$ be such that $\phi(x)=x$ for all $x\in\overline{\Omega}$, we get by the Lebesgue Dominated Convergence Theorem
$$
\beta_{\lambda_n}(u_n)=\frac{\|u_n\|_{p^{\ast}}^{p^{\ast}}}{\mathcal{S}^{N/p}}\int_{\mathbb R^N} \phi(x)|w_n(x)|^{p^{\ast}}dx
=\frac{\|u_n\|_{p^{\ast}}^{p^{\ast}}}{\mathcal{S}^{N/p}}\int_{\mathbb R^N} \phi(\theta_n x+y_n)|v_n(x)|^{p^{\ast}}dx\to y\in\overline{\Omega}
$$
as $n\to\infty$, since also $y_n\to y$ by Lemma \ref{lemmalions} and considering \eqref{5.43}. This gives the desired contradiction.
\end{proof}

\begin{lemma}\label{lemma6.1}
Let $\lambda_{{\ast}{\ast}}>0$ be as in Lemma \ref{lemma5.3}. Then, for any $\lambda\in(0,\lambda_{{\ast}{\ast}})$ we have that
$$
\text{cat}_{J_\lambda^{m(\lambda)}}\left(J_\lambda^{m(\lambda)}\right)\geq cat_\Omega(\Omega).
$$
\end{lemma}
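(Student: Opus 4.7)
The plan is to invoke the classical Benci–Cerami category principle: given continuous maps $f:A\to B$ and $g:B\to C$ with $A\subset C$ and $g\circ f$ homotopic in $C$ to the natural inclusion $\iota:A\hookrightarrow C$, one concludes $\text{cat}_B(B)\geq \text{cat}_C(A)$. I will apply this with $A=\Omega_r^-$, $B=J_\lambda^{m(\lambda)}$, $C=\Omega_r^+$, $f=\zeta_\lambda$, and $g=\beta_\lambda$; the latter takes values in $\Omega_r^+$ by Lemma \ref{lemma5.3}, and $\text{cat}_{\Omega_r^+}(\Omega_r^-)=\text{cat}_\Omega(\Omega)$ follows from the homotopy equivalences $\Omega_r^-\simeq\Omega\simeq\Omega_r^+$ built into the choice of $r$.

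First I would check that $\zeta_\lambda$ and $\beta_\lambda$ are continuous: continuity of $\zeta_\lambda$ is immediate from the continuity of translations in the $\mathrm{X}_0$-norm applied to the fixed function $u_{\lambda,B_r}$, while continuity of $\beta_\lambda$ follows from strong convergence in $L^{p^{\ast}}(\Omega)$ together with the boundedness of $\Omega$. The composition has already been computed in the text as $(\beta_\lambda\circ\zeta_\lambda)(y)=c_\lambda\, y$, with $c_\lambda:=\|u_{\lambda,B_r}\|_{p^{\ast}}^{p^{\ast}}/\mathcal{S}^{N/p}$.

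Next I would show that $c_\lambda\to 1$ as $\lambda\to 0^+$, refining $\lambda_{{\ast}{\ast}}$ if necessary. The identity
$$
m(\lambda)=\frac{1}{N}\|u_{\lambda,B_r}\|_{p^{\ast}}^{p^{\ast}}+\lambda\left(\frac{1}{p}-\frac{1}{q}\right)\|u_{\lambda,B_r}\|_q^q<\frac{1}{N}\mathcal{S}^{N/p}
$$
yields the upper bound $c_\lambda<1$, while the Nehari condition on $u_{\lambda,B_r}$ combined with \eqref{sob1} and H\"older's inequality gives
$$
\mathcal{S}\,\|u_{\lambda,B_r}\|_{p^{\ast}}^p\leq \lambda C\,\|u_{\lambda,B_r}\|_{p^{\ast}}^q+\|u_{\lambda,B_r}\|_{p^{\ast}}^{p^{\ast}},
$$
which, together with the uniform upper bound $\|u_{\lambda,B_r}\|_{p^{\ast}}^{p^{\ast}}<\mathcal{S}^{N/p}$, forces $\|u_{\lambda,B_r}\|_{p^{\ast}}^{p^{\ast}}\to \mathcal{S}^{N/p}$ as $\lambda\to 0^+$. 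Fixing $\lambda_{{\ast}{\ast}}$ so small that $(1-c_\lambda)\,\text{diam}(\Omega)<r$, the straight-line homotopy $H(t,y)=((1-t)c_\lambda+t)\,y$ satisfies $|H(t,y)-y|\leq(1-c_\lambda)\,\text{diam}(\Omega)<r$, and therefore takes values in $\Omega_r^+$, interpolating between $(\beta_\lambda\circ\zeta_\lambda)$ at $t=0$ and the inclusion $\iota$ at $t=1$. The abstract category principle then closes the proof.

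The main technical obstacle is precisely this last step: since $\Omega$ is not assumed to be star-shaped with respect to the origin, the naive linear homotopy from $c_\lambda\, y$ to $y$ could in principle exit $\Omega_r^+$. The resolution is to exploit the freedom in $\lambda_{{\ast}{\ast}}$ to force $c_\lambda$ arbitrarily close to $1$, keeping the displacement $(1-c_\lambda)|y|$ uniformly smaller than $r$ on $\Omega_r^-$; this reduces the issue to a standard application of Benci–Cerami.
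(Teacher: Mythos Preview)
Your argument follows the same Benci--Cerami scheme as the paper: use $\zeta_\lambda:\Omega_r^-\to J_\lambda^{m(\lambda)}$ and $\beta_\lambda:J_\lambda^{m(\lambda)}\to\Omega_r^+$, then compare with the inclusion $\Omega_r^-\hookrightarrow\Omega_r^+$ and invoke the homotopy equivalences $\Omega_r^-\simeq\Omega\simeq\Omega_r^+$. The paper does this by pulling back a contractible covering of $J_\lambda^{m(\lambda)}$ via $\zeta_\lambda$ and pushing forward the contractions via $\beta_\lambda$; you phrase the same idea through the abstract category principle. In fact you are more careful than the paper on one point: the paper's deformations $g_j(t,y)=\beta_\lambda(h_j(t,\zeta_\lambda(y)))$ satisfy $g_j(0,y)=c_\lambda\,y$ rather than $y$, so a further homotopy from $c_\lambda\,y$ to $y$ inside $\Omega_r^+$ is needed; the paper leaves this implicit, whereas you supply it via $c_\lambda\to 1$ and the straight-line homotopy.

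The only caveat is that your fix may force a smaller threshold than the $\lambda_{\ast\ast}$ produced by Lemma~\ref{lemma5.3}: you need $(1-c_\lambda)\,\mathrm{diam}(\Omega)<r$, which you obtain from $c_\lambda\to 1$ as $\lambda\to 0^+$. Thus, as written, you prove the conclusion for all $\lambda$ in a possibly smaller interval $(0,\lambda_{\ast\ast}')$ with $\lambda_{\ast\ast}'\le\lambda_{\ast\ast}$. This is harmless for Theorem~\ref{T1}, whose statement only requires \emph{some} threshold, but it does not literally match the hypothesis ``$\lambda_{\ast\ast}$ as in Lemma~\ref{lemma5.3}''. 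If you want to keep the lemma exactly as stated, the cleanest remedy is to normalize the barycenter by $\|u\|_{p^\ast}^{p^\ast}$ instead of $\mathcal{S}^{N/p}$, so that $(\beta_\lambda\circ\zeta_\lambda)(y)=y$ on the nose and no extra homotopy is needed.
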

\begin{proof}
Suppose that $\text{cat}_{J_\lambda^{m(\lambda)}}\left(J_\lambda^{m(\lambda)}\right)=M$, that is
\begin{equation}\label{6.1}
J_\lambda^{m(\lambda)}=\Theta_1\cup\ldots\cup\Theta_M,
\end{equation}
where $\Theta_j$ is closed and contractible in $J_\lambda^{m(\lambda)}$, for $j=1,\ldots,M$. This means that there exist
$h_j\in C^0\left([0,1]\times\Theta_j\,;\,J_\lambda^{m(\lambda)}\right)$ and $u_j\in\Theta_j$ such that
$$
h_j(0,u)=u,\qquad h_j(1,u)=u_j,\qquad\mbox{for any }u\in\Theta_j.
$$
By \eqref{5.10} and \eqref{6.1}, we can set
$$
B_j=\zeta_\lambda^{-1}(\Theta_j),\qquad j=1,\ldots,M.
$$
In this way, $B_j$ are closed in $\Omega_r^-$ and satisfy
$$
\Omega_r^-=B_1\cup\ldots\cup B_M.
$$
Let $g_j:[0,1]\times B_j\to\Omega_r^+$ be the deformation given by
$$
g_j(t,y)=\beta_\lambda\left(h_j(t,\zeta_\lambda(y))\right),
$$
which is well defined thanks to Lemma \ref{lemma5.3}. These maps are contractions of the sets $B_j$ in $\Omega_r^+$, so
$$
\text{cat}_\Omega(\Omega)=\text{cat}_{\Omega_r^-}(\Omega_r^+)\leq M,
$$
concluding the proof.

\end{proof}

Finally, we can address the proof of  Theorem \ref{T1}.

\begin{proof}[Proof of Theorem \ref{T1}]
Let $\lambda\in(0,\lambda_{{\ast}{\ast}})$ with $\lambda_{{\ast}{\ast}}$ given in Lemma \ref{lemma5.3}.
By Lemma \ref{lemmaPSN} we know that the $(PS)_c^{\mathcal N_\lambda}$ condition holds true for any $c<\frac{1}{N}\mathcal{S}^{N/p}$.

Thus, by Lemmas \ref{lemmastimanehari} and \ref{lemma6.1} joint with the standard Lusternik-Schnirelman theory, as in \cite[Theorem 5.20]{W}, we obtain at least $\text{cat}_\Omega(\Omega)$ critical points of $J_\lambda$ restricted to $\mathcal N_\lambda$.
Of course, each of these critical points is a nontrivial critical point of the unconstrained functional $J_\lambda$.

Indeed, if $u\in\mathcal N_\lambda$ is a critical point of $J_\lambda$ restricted on $\mathcal N_\lambda$, there exists $\gamma\in\mathbb R$ such that
\begin{equation}\label{2.12}
J^{\prime}_\lambda(u)=\gamma\psi^{\prime}_\lambda(u),
\end{equation}
with $\psi_\lambda$ set as at the beginning of the present section.
In particular, we have
$$
0=\langle J^{\prime}_\lambda(u),u\rangle=\gamma\langle \psi^{\prime}_\lambda(u),u\rangle.
$$
From this and \eqref{2.11} we deduce that $\gamma=0$. Thus, by \eqref{2.12} we get that $J^{\prime}_\lambda(u)=0$.

In conclusion, we obtain at least $\text{cat}_\Omega(\Omega)$ nontrivial critical points of unconstrained $J_\lambda$, which provide $\text{cat}_\Omega(\Omega)$ nontrivial solutions of \eqref{P}.

\end{proof}

\section*{Acknowledgments}
This manuscript is part of the third author's Ph.D thesis. V.A.B. Viloria would like to thank the Department of Mathematics from Universidade Estadual de Campinas for fostering a pleasant and productive scientific atmosphere, which has benefited a lot the final outcome of this project. V.A.B. Viloria thanks to Capes-Brazil (Doctoral Scholarship). J.V. da Silva was partially supported by Conselho Nacional de Desenvolvimento Cient\'{i}fico e Tecnol\'{o}gico (CNPq-Brazil) under Grants No. 307131/2022-0 and FAPDF Demanda Espont\^{a}nea 2021 and FAPDF - Edital 09/2022 - Demanda Espont\^{a}nea.
A.\,Fiscella is member of the {Gruppo Nazionale per l'Analisi Ma\-tema\-tica, la Probabilit\`a e
	le loro Applicazioni} (GNAMPA) of the {Istituto Nazionale di Alta Matematica ``G. Severi"} (INdAM).
A.\,Fiscella realized the manuscript within the auspices of the FAPESP Thematic Project titled "Systems and partial differential equations" (2019/02512-5).

\end{document}